\newcommand{\EE}{\mathbb{E}}
\newcommand{\NN}{\mathbb{N}}
\newcommand{\PP}{\mathbb{P}}
\newcommand{\cA}{\mathcal{A}}
\newcommand{\cB}{\mathcal{B}} 
\newcommand{\cI}{\mathcal{I}}
\newcommand{\cC}{\mathcal{C}}
\newcommand{\cD}{\mathcal{D}} 
\newcommand{\cF}{\mathcal{F}}
\newcommand{\cG}{\mathcal{G}} 
\newcommand{\cJ}{\mathcal{J}}
\newcommand{\cK}{\mathcal{K}}
\newcommand{\cL}{\mathcal{L}}
\newcommand{\cN}{\mathcal{N}}
\newcommand{\cT}{\mathcal{T}}
\newcommand{\cS}{\mathcal{S}} 
\newcommand{\cR}{\mathcal{R}} 
\newcommand{\cQ}{\mathcal{Q}}
\newcommand{\cY}{\mathcal{Y}}
\newcommand{\cZ}{\mathcal{Z}}
\renewcommand{\a}{\alpha}
\newcommand{\D}{\Delta} 
\renewcommand{\d}{\delta}
\newcommand{\eps}{\varepsilon}
\newcommand{\g}{\gamma}
\renewcommand{\b}{\beta} 
\newcommand{\Om}{\Omega}
\newcommand{\om}{\omega} 
\newcommand{\s}{\sigma}
\newcommand{\ph}{\varphi}
\newcommand{\up}{\uparrow}
\newcommand{\dn}{\downarrow}
\newcommand{\oo}{\infty}
\newcommand{\ol}{\overline}
\newcommand{\sm}{\setminus}
\newcommand{\se}{\subseteq}
\newcommand{\es}{\varnothing}
\newcommand{\eqdist}{\overset{{\scriptscriptstyle \mathrm{(d)}}}{=}}
\newcommand{\floor}[1]{\lfloor #1\rfloor}
\newcommand{\ceil}[1]{\lceil #1\rceil}
\newcommand{\one}{\hbox{\rm 1\kern-.27em I}}
\newcommand{\be}{\begin{equation}}
\newcommand{\ee}{\end{equation}}
\newcommand{\R}{\mathbb{R}}
\newcommand{\E}{\mathbb{E}}
\newcommand{\eqdef}{:=}
\newcommand{\signExploration}{\mathcal{L}}
\newcommand{\lapProcess}{\mathcal{K}}
\newcommand{\fS}{\mathfrak{S}}
\newcommand{\his}{\mathrm{his}}
\numberwithin{equation}{section}
\numberwithin{figure}{section}
\theoremstyle{plain}
\newtheorem{thm}{\protect\theoremname}[section]
  \theoremstyle{definition}
  \newtheorem{definition}[thm]{\protect\definitionname}
  \theoremstyle{plain}
  \theoremstyle{plain}
  \newtheorem{proposition}[thm]{\protect\propositionname}
  \theoremstyle{remark}
  \theoremstyle{plain}
  \newtheorem{corollary}[thm]{\protect\corollaryname}
\theoremstyle{plain}
  \newtheorem{lemma}[thm]{\protect\lemmaname}
  \newcounter{constant}
  \providecommand{\corollaryname}{Corollary}
  \providecommand{\definitionname}{Definition}
  \providecommand{\factname}{Fact}
  \providecommand{\propositionname}{Proposition}
  \providecommand{\remarkname}{Remark}
\providecommand{\theoremname}{Theorem}
\providecommand{\lemmaname}{Lemma}
\newcommand{\cycles}{\mathfrak{C}}            
\newcommand{\cycstruc}{\mathfrak{X}}
\newcommand{\bal}{\mathfrak{B}}
\newcommand{\PD}{\cY}
\def\cross{\lower .5mm\hbox{\includegraphics{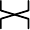}}}
\def\dbar{\lower .5mm\hbox{\includegraphics{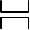}}}
 \def\l@subsection{\@tocline{2}{0pt}{2pc}{6pc}{}}
\begin{document}

\title[Interchange process  with 
reversals]{The interchange process  with 
reversals \\ on the complete graph}

\author{J. E. Bj\"ornberg$^1$}
\address{$^1$Department of Mathematics,
Chalmers University of Technology and the University of Gothenburg,
Sweden}
\author{Micha\l{} Kotowski$^2$}
\address{$^2$Institute of Mathematics, Faculty of Mathematics, Informatics, and Mechanics, University of Warsaw, Banacha 2, 02-097 Warsaw, Poland}
\author{B. Lees$^3$}
\address{$^3$Fachbereich Mathematik, Technische Universit\"at Darmstadt,  Germany}
\author{P. Mi\l o\'s$^4$ }
\address{$^4$ Institute of Mathematics of the Polish Academy of Sciences, Warsaw, Poland}

\email{$^1$jakob.bjornberg@gu.se, $^2$michal.kotowski@mimuw.edu.pl}
\email{$^3$lees@mathematik.tu-darmstadt.de, $^4$pmilos@mimuw.edu.pl}

\begin{abstract} 
We consider an extension of the interchange process
on the complete graph, in which a
fraction of the transpositions are replaced by `reversals'.  The
model is motivated by statistical physics, where it plays a role in
stochastic representations of \textsc{xxz}-models.
We prove convergence to PD($\tfrac12$) of the rescaled cycle sizes,
above the critical point for the appearance of macroscopic cycles.
This extends a result of Schramm on convergence to PD(1) for the usual
interchange process.
\end{abstract}
\maketitle

\tableofcontents

\section{Introduction}

Recent years have seen a growing interest in the cycle structure of
large random permutations.  A major example is the interchange process,
or random-transposition random walk.   One motivation for
studying this process is that it plays a key role in a
stochastic representation of the most important quantum spin system, 
the ferromagnetic Heisenberg model.  This representation was developed 
by T\'oth in the early 1990's \cite{T} (after an earlier observation
 by Powers \cite{Powers}).

At about the same time, a closely related stochastic representation
was discovered for the \emph{anti}-ferromagnetic Heisenberg model, by
Aizenman and Nachtergaele \cite{A-N}.  Very roughly speaking, in
the ferromagnetic Heisenberg model the 
interaction between neighbouring electrons 
behaves like a transposition of the spins. In the antiferromagnetic
model the interaction involves a `reversal', which
Aizenman and Nachtergaele depicted as on the right in Figure
\ref{fig:transpositions}. 

\begin{figure}[hb]
\centering
\includegraphics{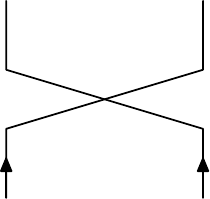} \hspace{3cm}
\includegraphics{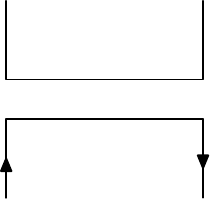}
\caption{Pictorial representation of a transposition (left) and a
  `reversal' (right).
}
\label{fig:transpositions}
\end{figure}

In both cases, the stochastic representation of the spin system 
involves randomly placing these objects in the product of the graph
with an interval.
In the case of the ferromagnetic
model, the relevant measure has transpositions appearing 
randomly at each edge, in the manner
of independent Poisson processes.  For the antiferromagnetic model, the
structure is the same except that the transpositions are replaced by
 `reversals'
as on the right in Figure \ref{fig:transpositions}.
Many quantities of interest for the spin systems, such as correlation
functions, may be expressed using expected values of suitable random
variables in these processes.  

Recently, Ueltschi \cite{U} explained that weighted
combinations of the two processes described above also lead to  
representations of certain quantum spin systems (known as
\textsc{xxz}-models).  The relevant measure has independent
Poisson processes on the edges as before, but the objects are now
randomly chosen to be either transpositions or `reversals', 
independently over the points of
the process and with some fixed probability (see Figure
\ref{fig:loopexample}). 
In this paper we study such a process defined on the complete
graph.  Our main result is that the correlation structure in this
model, above a critical point, is described by a probability
distribution on random partitions called the 
Poisson--Dirichlet distribution with
parameter $\tfrac12$.
To state our results more precisely,  let us give the relevant
definitions.

\subsection{Definitions}

We consider the complete graph $K_n = (V_n, E_n)$ on $n>2$ vertices. 
The vertex set is $V_n=\{1,2,\dotsc,n\}$
and the edge-set consists of all pairs $\{i,j\}$ of vertices $i\neq j$.
To
each edge and vertex we attach a circle of circumference $1$, which we
denote by $S^1$.  We will sometimes identify $S^1$ with the unit
interval $[0,1)$.
A \emph{configuration} $\om$ 
is a finite subset of 
$E_n \times S^1 \times \{\cross,\dbar\}$, 
where $\cross,\dbar$ are two possible
marks which we call a \emph{cross} and a \emph{bar},
respectively.  
The collection of configurations is denoted $\Om$.
An element $(e,\varphi,m)\in\om$ of the configuration is called a
\emph{link}
and if $(e,\varphi,m)$ is a link
then we say that $\om$
\emph{has a link at} $(e,\varphi)\in E_n\times S^1$.

We will primarily be interested in configurations obtained as
samples of a (marked) Poisson point process defined in the following
way. Fix $\nu \in [0,1)$ 
and $\beta > 0$.  For each edge $e\in E_n$ we consider
a Poisson point process with intensity $\tfrac\b {n-1}$ on
$e\times S^1$, these Poisson processes
being independent for different edges $e$. 
This defines a configuration of unmarked links. 
The configuration $\omega$ is then obtained by assigning to each link
a mark, independently of all other links, which is either a
cross, $\cross$, with probability $\nu$, or a bar, $\dbar$, with
probability $1 - \nu$. The probability measure corresponding to this
point process will be denoted by $\PP_{\b}$ (we consider $\nu$ to be
fixed and it will be suppressed in the notation), and the corresponding
expectation will be denoted by $\EE_{\b}$.  
We will refer to this process as the \emph{interchange process with
  reversals}
(the usual interchange process would correspond to taking $\nu=1$).

Such a configuration $\om$ gives rise to a set of \emph{loops} 
$\g\se V_n\times S^1$. 
We first give an informal description and then a more precise
definition.
For a fixed point $(v,\ph)\in V_n\times S^1$ the
unique loop, $\g(v,\ph)$, containing it is constructed by the following
process.  Starting from $(v,\ph)$ we move on the associated circle in
the positive direction, i.e. after time $\mathrm{d}t$ we are at a
point $(v,\ph + \mathrm{d}t)$. If we encounter a point $(v, \ph')$ 
such that $\om$ has a link at $(\{v,w\},\ph')$ for 
some $w\in V_n$, 
then we traverse this link to $(w,\ph')\in V_n\times S^1$. 
We then continue moving in the positive direction if the link was a
cross, or in the negative direction
if the link was a bar. Each time we encounter
a link we follow this rule of traversing the link, reversing our
direction if the link was a bar. Continuing until we
arrive back at $(v,\ph)$ we have traced out a single loop, $\g(v,\ph)$.

More formally, following \cite{G-U-W,U} we may define the
loops as follows.  A loop of length $L$ is a function
$\gamma:[0,L)\to V_n\times S^1$ such that,
writing $\gamma(t)=(v(t),\ph(t))$, the
following properties hold:
\begin{enumerate}
\item $\gamma$ is injective and satisfies 
$\lim_{t\uparrow L}\gamma(t)=\gamma(0)$. 
\item $\gamma$ is piecewise continuous, 
and if it is continuous on
the interval $I\subset [0,L)$ then $v(t)$ and 
$\tfrac{\mathrm{d}}{\mathrm{d}t}\ph(t)$ are
constant on $I$, with
$\tfrac{\mathrm{d}}{\mathrm{d}t}\ph(t)\in\{-1,1\}$.
\item $\gamma$ is discontinuous at the point $t$ if and only if
$\om$ has a link
at $(\{v(t-),y\},t)$ for some $y\neq v(t-)$, in which case $v(t+)=y$.
\item If $I_1=(t_1,t_2)$ and $I_2=(t_2,t_3)$ with $\gamma$ continuous
on $I_1$ and $I_2$ but discontinuous at $t_2$ then for any $s_1\in
I_1$ and $s_2\in I_2$ we have that
$\tfrac{\mathrm{d}}{\mathrm{d}t}\ph(s_1)=\tfrac{\mathrm{d}}{\mathrm{d}t}\ph(s_2)$
if the link at $(\{v(t_2-),v(t_2+)\},t_2)$ is a cross and
$\tfrac{\mathrm{d}}{\mathrm{d}t}\ph(s_1)=-\tfrac{\mathrm{d}}{\mathrm{d}t}\ph(s_2)$
if  
it is a bar.
\end{enumerate}
Loops with the same support but different parameterisations are
identified. This means that the functions 
$\gamma(t)$, $\gamma(-t)$
and, for $s\in \R$, $\gamma(s\pm t)$ are identified. 
From this description we can give $\omega$ a natural pictorial
representation, see Figure \ref{fig:loopexample}.

\begin{figure}[t]
\includegraphics[width=12cm,height=8cm]{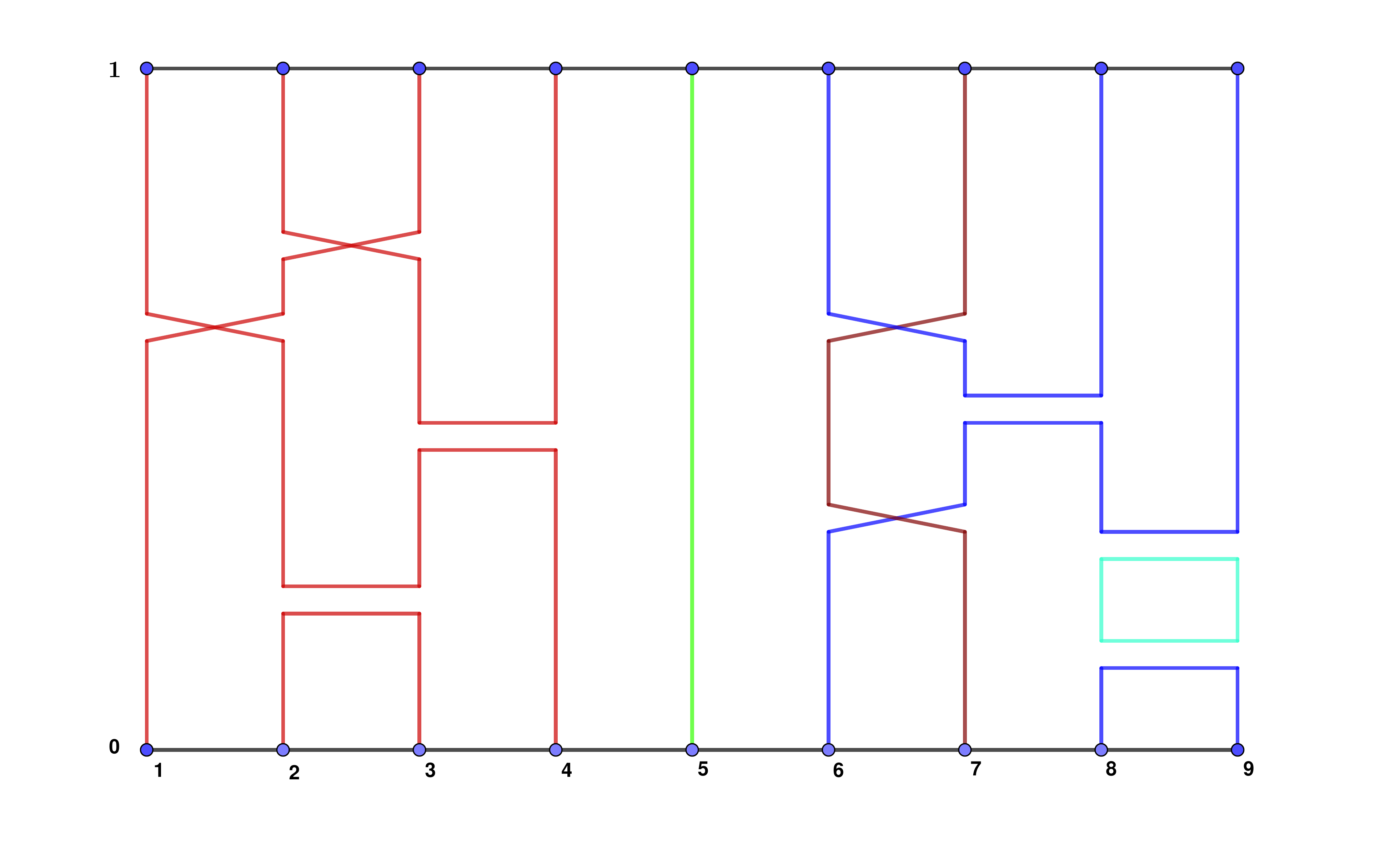}
\centering
\caption{An example of a realisation of links with loops coloured.
The circles $\{v\}\times S^1$ are represented as intervals placed
vertically, thus there are periodic boundary conditions vertically.
The cycles are $(1^\up,3^\up,2^\dn,4^\up)$,
$(5^\up)$, $(6^\up,9^\up,8^\dn)$ and $(7^\up)$.  There is one small
loop which does not give rise to a cycle.
}
\label{fig:loopexample}
\end{figure}

A \emph{cycle} is a sequence of vertices $v\in V_n$
such that the points $(v,0)$ are visited by a single loop. 
Namely, suppose we start at a point $(v_1,0)$ and follow the loop 
$\g=\g(v_1,0)$, in either direction, until we
return to the starting point.  If we enumerate the successive 
visits to $V_n\times \{0\}$ as $(v_1,0),\dotsc,(v_\ell,0),(v_1,0)$
then the corresponding cycle
is
\[
	\cC=(v_1^{d_1},v_2^{d_2},\dotsc,v_\ell^{d_\ell}), \qquad v_i\in V_n.
\]
Here the cycle $\cC$ has length $|\cC|=\ell$, and the
$d_i\in\{\up,\dn\}$ denote the direction in which we pass through the
point $(v_i,0)\in V_n\times S^1$, with $\up$ corresponding to
the positive direction ($\tfrac{\mathrm{d}}{\mathrm{d}t}\ph(t)=+1$)
and $\dn$ corresponding to negative direction
 ($\tfrac{\mathrm{d}}{\mathrm{d}t}\ph(t)=-1$). 
Note that the
directions $d_i$ in a cycle are defined up to an overall reversal
and that we
made an arbitrary choice of the first vertex $v_1$. 
It is also
worth noting that not every loop gives rise to a cycle, see Figure
\ref{fig:loopexample}.
A fixed configuration of links, $\omega$, has an associated set of
cycles which we denote by $\cycles_{\omega}$. 

\subsection{Main result}

Let $\omega$ be sampled from the measure $\PP_{\b}$. Consider the
random graph where an edge is present between vertices $u$ and $v$ if
there is at least one link on $\{u,v\}\times S^1$ in $\omega$. 
By the Erd\H{o}s-R\'enyi theorem,
if $\b>1$ then the largest
connected component of this graph, $V_G^\b$,
has size approximately $zn$ where $z$ is the positive solution to 
$1-z=e^{-\b z}$.
(If $\b<1$ the largest component has size smaller than $(\log n)^2$
and the same holds for the largest cycle.)
Let $\cycstruc_\omega$ denote 
the list $(|\cC|/|V_G^{\b}| \colon \cC\in\cycles_\omega)$ of
rescaled cycle sizes, ordered by
decreasing size (we make it into an infinite list by appending
infinitely many 0's).
 Our main result is the following.
\begin{thm}\label{thm:main}
Let $\b>1$ and $\nu \in [0,1)$.  Let $\omega$ be sampled 
from the corresponding measure $\PP_{\b}$. As $n\to\infty$ the law of 
$\cycstruc_\omega$ converges weakly to the
Poisson--Dirichlet distribution \textup{PD($\tfrac{1}{2}$)}.
\end{thm}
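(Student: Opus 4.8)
The plan is to follow the strategy of Schramm for the usual interchange process, isolating the one genuinely new feature: a positive density of bars changes the \emph{effective fragmentation rate} of the induced split--merge dynamics, and this is exactly what turns PD(1) into PD($\tfrac12$). First realise the family $(\PP_\b)_{\b\ge0}$ on one probability space by letting the Poisson processes on the edges grow in the parameter $\b$ itself: links arrive on each edge at rate $\tfrac1{n-1}$, each carrying an independent mark that is $\cross$ with probability $\nu$ and $\dbar$ with probability $1-\nu$, so that at ``time'' $\b$ the configuration $\om_\b$ has law $\PP_\b$. The cycle-size partition $\cycstruc_{\om_\b}$ then changes only when a link is added, by a local rule read off from the loop definition: a link at $(\{u,v\},\ph)$ merges the cycles through $u$ and $v$ if these are distinct, and if instead $u$ and $v$ lie on a common loop it either splits that loop at $\ph$ or leaves it unchanged --- a short case analysis shows it splits precisely when the link is a $\cross$ and the loop has the \emph{same} orientation at the two cut points, or it is a $\dbar$ and the loop has \emph{opposite} orientations there, where ``same/opposite'' refers to the parity of the number of bars traversed along the loop between the two cut points (always an even total around any loop). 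The new mark is independent of everything else, and for a macroscopic loop --- which carries order $n$ links, a positive fraction of them bars, with the two cut points in asymptotically uniform relative position --- the parity of the number of intervening bars is an asymptotically fair coin. Hence, conditionally on $u$ and $v$ lying on a common macroscopic loop, a split occurs with probability $\nu\cdot\tfrac12+(1-\nu)\cdot\tfrac12=\tfrac12$, at the uniformly chosen point $\ph$; for $\nu=1$ there are no bars, the orientations always agree, and this probability is $1$, recovering the Schramm case. Thus the macroscopic part of $\cycstruc_{\om_\b}$ performs, after the obvious time change, a coagulation--fragmentation chain with coagulation rate $1$ and uniform-split fragmentation rate $\tfrac12$.

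The analytic input is then that this limiting coagulation--fragmentation flow (coagulation rate $1$, uniform-split fragmentation rate $\tfrac12$) has a unique invariant probability measure on mass-partitions, namely PD($\tfrac12$); this is the classical regime in which Poisson--Dirichlet laws are invariant under such dynamics (in the spirit of Mayer-Wolf, Zeitouni and Zerner, via the residual-allocation description of PD). As a sanity check, PD($\theta$) satisfies $\EE[\sum_i x_i^2]=\tfrac1{1+\theta}$, whereas balancing coagulation against fragmentation in the chain above forces $\tfrac12\sum_i x_i^2=1-\sum_i x_i^2$, i.e.\ $\sum_i x_i^2=\tfrac23$, matching $\theta=\tfrac12$; the conceptual reason for the halving is the extra $\ZZ_2$-symmetry carried by the bars (the natural algebraic object becomes a random element of the hyperoctahedral group, equivalently a loop model on a $\ZZ_2$-cover). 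In parallel one needs the structural estimates familiar from the $\nu=1$ analysis: the Erd\H{o}s-R\'enyi input already quoted (a unique giant $V_G^\b$ of size $\sim zn$, all other components $O((\log n)^2)$), the fact that the mass in $\cycstruc_{\om_\b}$ carried by cycles of size $\le\eps n$ tends to $0$ as $n\to\infty$ and then $\eps\to0$, and that a macroscopic split produces macroscopic pieces off a negligible event. These carry over from the interchange process; only the bookkeeping of marks is added, and it does not affect the size estimates.

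Finally one must show, for \emph{every} fixed $\b>1$, that the macroscopic part of $\cycstruc_{\om_\b}$ converges in law to PD($\tfrac12$). For $\b$ bounded away from $1$ this is a coupling/mixing argument: between the near-critical time at which the giant emerges and time $\b$, the chain of the first step runs for of order the number of steps needed to mix on $\Theta(n)$ masses, hence forgets its initial configuration and relaxes to the invariant law PD($\tfrac12$). For $\b$ close to $1$ this window is too short, and one argues instead as Schramm does in the near-critical regime, analysing the cycle structure carried by the just-emerged giant via the exploration process of the near-critical Erd\H{o}s-R\'enyi graph and checking that it is already PD($\tfrac12$)-distributed, the bars entering only through the orientation coin above. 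Combining the two regimes yields the theorem. I expect the main obstacle to be precisely this last step near criticality, together with making the orientation-coin heuristic uniform: one has to show simultaneously that a link dropped at a uniform height meets the macroscopic loops with the correct length-biased probabilities, and that, conditionally on meeting one of them twice, the parity of intervening bars is asymptotically fair and asymptotically independent of the new mark, with estimates robust as $\b\downarrow1$. Decoupling the random-graph geometry of the near-critical giant from the orientation data running along its loops, while keeping quantitative control, is where the real work lies.
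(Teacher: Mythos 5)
Your core heuristic is correct and matches the paper's: adding a link whose two endpoints lie in the same macroscopic loop produces a split with probability close to $\tfrac12$, because the orientation at the second endpoint relative to the first is determined by a bar-parity that becomes an asymptotically fair coin, and PD($\tfrac12$) is the invariant law for the split-merge chain with split probability $\tfrac12$. But the proposal has two substantive gaps.

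First, the ``mixing/relaxation'' mechanism you invoke is not what makes Schramm's (or this paper's) argument work, and the split you make between ``$\b$ bounded away from $1$'' and ``$\b$ close to $1$'' is a red herring. Schramm's approach is not to wait for the chain to relax to equilibrium; it is a direct coupling in which one appends only a bounded number $q\le\ceil{\eps^{-1/2}}$ of additional random links to a configuration $\vec\om_s$ that, by the Large Cycles Lemma (valid uniformly for $s\geq cn$, any $c>\tfrac12$), already has almost all of the giant in macroscopic cycles. One couples those $q$ steps against $q$ steps of the continuous split-merge chain started from PD($\tfrac12$), and uses Lemmas~\ref{L3.1}--\ref{L3.3} and Corollary~\ref{C3.4} to show the unmatched mass is small. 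Consequently there is no ``window too short'' issue for fixed $\b>1$: the paper proves all its estimates uniformly in $\b\in[\b_0,\b_1]$ for any $\b_1>\b_0>1$, and no separate near-critical exploration of the emerging giant is needed (Schramm does not do that either; his Lemmas~2.2--2.4 are a bootstrap from Erd\H{o}s--R\'enyi, not a near-critical analysis).

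Second, and more fundamentally, the observation that a \emph{uniformly random} pair $u,v$ on a large loop has same orientation with probability $\to\tfrac12$ is correct but is not strong enough to run the coupling. In Schramm's scheme the uniform $U'$ that chooses the second endpoint simultaneously determines the \emph{location} of the proposed split in the PD partition, and the split \emph{decision} must therefore be (approximately) independent of $U'$. If a cycle of length $k$ consisted of $k/2$ consecutive $\up$ vertices followed by $k/2$ consecutive $\dn$ vertices, its overall balance is perfect and the random-pair split probability is $\tfrac12$, yet the split decision would be a deterministic function of the split location, which destroys the coupling. This is exactly why the paper proves the much stronger ``segment balance'' statement (Propositions~\ref{prop:preBalanceFact} and \ref{prop:balanceFact}): every block of $\floor{n^{1/2}}$ consecutive vertices in a long cycle has balance $o(n^{1/2})$, uniformly over starting vertex, with failure probability $e^{-c\log^2 n}$, uniformly in $\b\in[\b_0,\b_1]$. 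The coupling then uses a pair $(U',U'')$, with $U'$ choosing a segment and $U''$ an independent location within it (``trading accuracy for independence''), to make the split decision essentially independent of $U'$ up to an $n^{-1/12}\log^3 n$ error. Establishing the segment-balance estimate is the main technical content of the paper (the exploration and simple exploration processes of Section~\ref{exploration-sec}, the winding-process concentration of Proposition~\ref{prop:windingconcentration}, and the decomposition-coupling argument in Proposition~\ref{prop:goodBalanceForExploration}); the proposal gestures at ``making the orientation-coin heuristic uniform'' but supplies neither the correct target statement nor a route to prove it.
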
 

More precisely, we will show that for given $\b>1$ and
$\varepsilon>0$ there exists $n(\b,\varepsilon)$ such that for
$n>n(\b,\varepsilon)$
there is a coupling of the interchange process with reversals
with a PD($\tfrac{1}{2}$) sample $\PD$ such that
\begin{equation}\label{eq:convergencetoPD}
\PP\big(\big\|\PD-\cycstruc\big\|_{\infty}
<\varepsilon\big)>1-\varepsilon.
\end{equation} 
Note that this result holds for any $\nu<1$.

The Poisson--Dirichlet distribution with parameter $\theta>0$,
PD($\theta$), can be defined via the `stick-breaking' construction
as follows.  Let $B_1,B_2,\dots$ be independent
Beta($1,\theta$) random variables, thus 
$\PP(B_i >s)=(1-s)^{\theta}$ for $s\in[0,1]$. 
We construct a random partition
$\{P_i\}_{i\in\NN}$ of $[0,1)$ using the $B_i$ by letting $P_1=B_1$ and
$P_{k+1}=B_{k+1}(1-P_1-\dots-P_k)$. 
We can think of constructing
$\{P_i\}_{i\in\NN}$ by progressively breaking off pieces of $[0,1)$,
with the $(k+1)^{\mathrm{th}}$ removed piece being a fraction
$B_{k+1}$ of what remained after $k$ pieces had been removed. The
law of the partition $\{P_i\}_{i\in\NN}$ is called the GEM($\theta$) 
distribution. 
The PD($\theta$) distribution is obtained 
by sorting the $P_i$ in order of decreasing size.

Returning to the context of Theorem \ref{thm:main}, let us comment on
the case $\nu=1$  (only crosses allowed) 
which is excluded by our result.
This is the (usual) interchange process,
and was considered on the complete graph in a famous paper by 
Schramm \cite{schramm}.   To be precise, he considered the closely
related process where the configuration $\om$ is obtained by placing
the crosses successively one after the other, uniformly and independently at
each step.  Viewing the crosses as transpositions, as above, the
process is a random-transposition random walk on the set of
permutations of $n$ objects.  In this case Schramm proved that, when
the number of transpositions exceeds $cn$ for $c>\tfrac12$,
then the rescaled cycle sizes of the resulting
random permutation converge in distribution to
PD(1).  

The main tool in Schramm's argument was
a coupling with a split-merge process which has
PD(1) as an invariant distribution.  Roughly speaking, the important
feature is what happens to an existing cycle when a uniformly chosen
transposition is applied.  If the transposition 
transposes two points which
belonged to different cycles then those cycles merge;  if they
belonged to the same cycle then the cycle is split.
A similar principle applies to the loops, which on the addition of
another cross to $\om$ either
merge, if the ends are in different loops, or split, if
both ends are in the same loop (Figure \ref{fig:splittwist}).

\begin{figure}[h!]
\includegraphics[width=10cm,height=5cm]{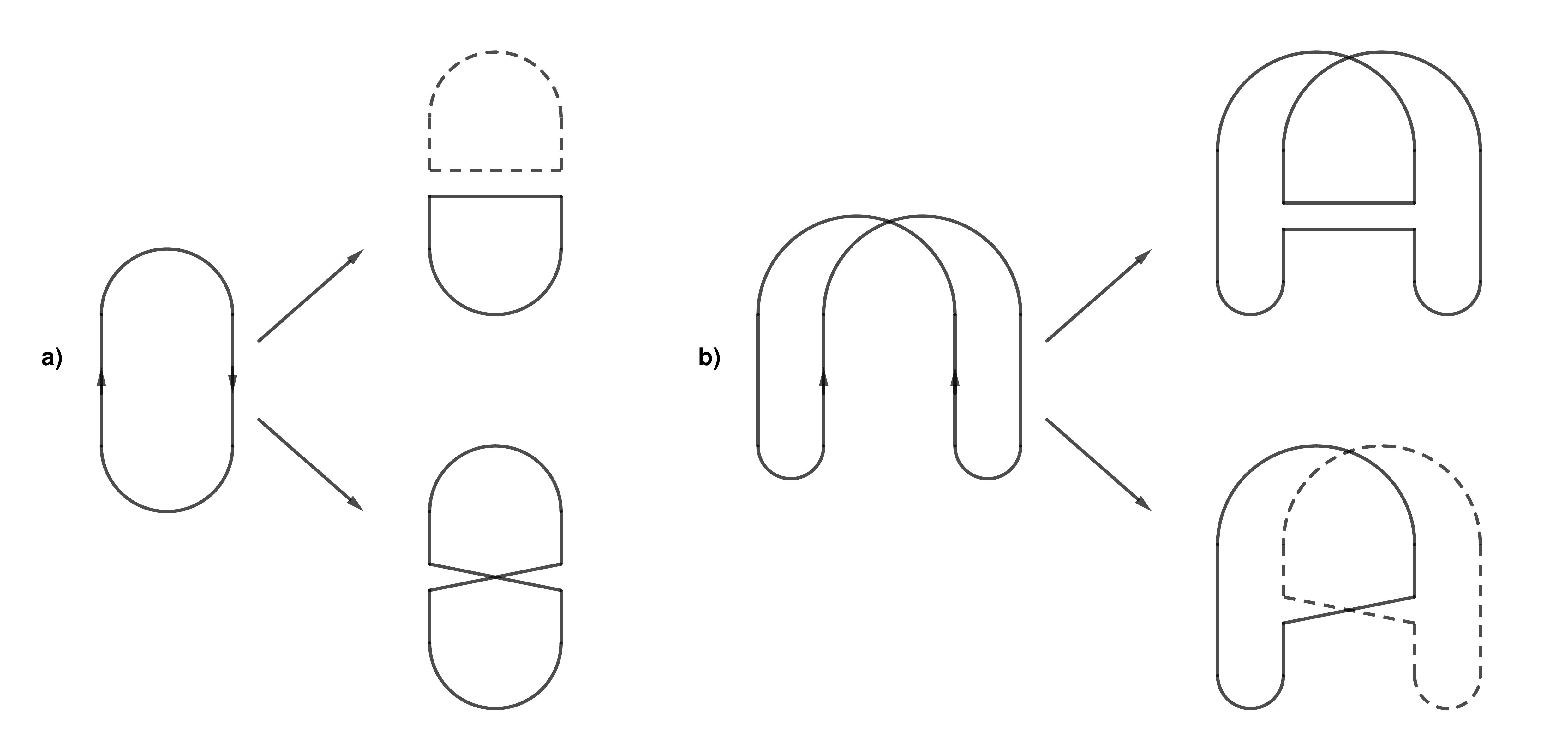}
\centering
\caption{Geometric interpretation of the effect of 
adding a cross or bar with
both endpoints in the same loop: a) in the case when the points have
opposite vertical orientation within the loop, a bar splits the loop
whereas a cross `twists';  b) in the case of the same
vertical orientation, a cross splits the loop whereas a
bar `twists'.
}
\label{fig:splittwist}
\end{figure}

Now we may explain how the case $\nu<1$ is different from $\nu = 1$, and why we
get PD($\tfrac12$) rather than PD(1).  The key point is that the
presence of bars ($\dbar$) introduces changes of orientation within
the loops.  
This means that on adding a link (cross or bar)
with both endpoints in
the same loop, this loop will not always split. 
Whether or not the loop splits depends on the orientation of the loop
at the points where the new link is placed.  Specifically, if the link
is a cross then a split occurs if and only if the orientation is the
same; if it is a bar then the opposite applies.
The situation is
depicted in Figure \ref{fig:splittwist}.

Intuitively, when $\nu<1$ 
one would expect large loops to
encounter many bars.   Hence a uniformly chosen pair of points on 
a large loop (with the same $S^1$-coordinate)
should have probability  close to $\tfrac12$ of having
the same orientation,
 meaning that the probability of
splitting is close to $\tfrac12$.
The corresponding split-merge dynamics, where proposed splits occur
with probability $\tfrac12$, has PD($\tfrac12$) as its invariant
distribution.  
 
\subsection{Outline and related works}

In order to prove Theorem \ref{thm:main} we need three ingredients. Firstly, we need that with high probability 
(converging to 1 as $n\to\oo$) there are cycles of size $\Theta(n)$, and that these large cycles 
occupy almost the entire giant component $V_G^\b$. This is proved in Section \ref{sec:large} by a straightforward
adaptation of arguments in \cite{schramm}. Secondly, we need that in large cycles
roughly half of all vertices are passed through in the positive
direction ($\up$) and roughly half in the negative direction
($\dn$).   In fact, we need the stronger statement that
the large cycles are `well-balanced', namely: 
one may partition them into much smaller segments such that each segment
consists of roughly half $\up$ and half $\dn$
(ruling out, for example, a situation in which a cycle of size $k$
consists of a block of $k/2$ vertices passed in 
direction $\uparrow$ followed by a block of
$k/2$ vertices with direction $\downarrow$).
This is the main novel contribution of the present paper, and is the
content of  Section \ref{sec:balance}. 
In proving this result we rely on a process which we call the 
\emph{exploration process}, which we study in 
Section \ref{exploration-sec}.
Thirdly, we show that Schramm's coupling, when combined with the
previous two ingredients,
can be adapted to couple a PD($\tfrac12$) sample with
$\cycstruc$  such that the two samples are close.  This appears in 
Section \ref{sec:PDcoupling}.

We now briefly summarise some other related works
apart from Schramm's paper \cite{schramm}.
First note that interchange processes, with reversals
($\nu<1$) or without
($\nu=1$), can be defined on more general graphs,
by placing independent Poisson processes of links on the edges of the
graph.  Most papers dealing with graphs other than the
complete graph have studied the question of
whether there can be large cycles.
The case when the graph is a
hypercube, and $\nu=1$, has been investigated
by Koteck\'y, Mi{\l}o\'s and Ueltschi \cite{K-M-U}.  
The case of  Hamming graphs, also for $\nu=1$,
has been investigated by Mi{\l}o\'s and \c{S}eng\"ul 
\cite{M-S} and by Adamczak, Kotowski and Mi{\l}o\'s
\cite{A-K-M}.
In the case when the graph is an infinite tree one may 
ask about the occurrence of \emph{infinite} cycles.
For $\nu=1$ this question was investigated by  
Angel \cite{A} and by Hammond \cite{H1,H2};
and for $\nu<1$ by Bj\"ornberg and Ueltschi  \cite{Bj-U}
as well as Hammond and Hegde \cite{H-H}.

As mentioned above, the original interest in the process 
was due to its connections with quantum spin systems. When the measure $\PP_{\b}$
defining the process is given an additional weighting of
$\vartheta^{\# \mathrm{loops}}$ for $\vartheta\in \NN$,
the loop-model is essentially equivalent to 
a spin system on the same graph.
This was first proved by T\'oth \cite{T} in the case
$\nu=1$ (spin-$\tfrac12$ Heisenberg ferromagnet for $\vartheta=2$)
and Aizenman and Nachtergaele \cite{A-N} in the case
$\nu=0$ (Heisenberg antiferromagnet, provided the graph is
bipartite).   
This connection was extended to the case $\nu\in[0,1]$ by
Ueltschi \cite{U}. 
From a probabilistic point of view, any $\vartheta>0$ makes sense. 
Such models  have been considered on trees
\cite{Bj-U2,B-E-L} and on the Hamming graph \cite{A-K-M}.
In very recent work there has been some limited progress in the direction
of establishing Poisson--Dirichlet structure in these and related loop
models \cite{BU,BFU}.
For the Heisenberg model ($\nu=1$ and $\vartheta=2$)
on the complete graph, the critical point for
the appearance of cycles of diverging length was established 
already in the early 1990's by T\'oth
and by Penrose \cite{penrose,toth-bec}.

\subsection*{Acknowledgements}

The research of JEB is supported by Vetenskapsr{\aa}det grant
2015-05195.  
BL gratefully acknowledges support from the 
Alexander von Humboldt Foundation.
JEB and BL also gratefully acknowledge support from 
\emph{Stiftelsen Olle Engkvist Byggm\"astare}. MK acknowledges support from the National Science Centre, Poland, grant no. 2015/18/E/ST1/00214. PM is supported by the National Science Centre, Poland, grant no. 2014/15/B/ST1/02165. We thank Rados\l aw Adamczak for useful discussions about this project.

\newpage 

\section*{Notation}
\begin{center}
	\begin{tabular}{ l l }

$V_n = \{1,2,...,n\}$ & vertex set of the complete graph; typical elements denoted $u,v,w,\dotsc$ \\
$E=\binom{V}{2}$ & edge set of the complete graph \\
$m\in\{\cross,\dbar\}$ & the `mark' of a link as either a cross or a bar \\
$\ph\in S^1$ & `phase' or vertical coordinate \\
$\Omega$ & space of configurations, i.e.\ finite subsets of $E\times S^1\times\{\cross,\dbar\}$ \\
$\om$, $\om_A$ & element of $\Om$, its restriction to $A\se E\times S^1$ \\
$\vec\om$, $\vec\om_k$ & ordered sequence of links in $\om$, its first $k$ elements  \\
$\b>0$ & intensity parameter \\
$\PP_{\b}$ & measure of the Poisson links process with intensity 
$\tfrac\b{n-1}$ per edge \\
$\nu\in[0,1)$ & probability of marking a link as a cross $\cross$ \\
$\cycles_\omega$ & set of cycles defined by $\om$ \\
$\cycstruc_\omega$ & list of rescaled cycle sizes \\
$V_G^\b$ & the largest cluster in the random graph whose edges
            support links \\
$X_t = X_t(v,\varphi)$ & exploration process for loops (started at $(v,\varphi)\in V_n\times S^1$) \\
$Y_t = Y_t(v,\varphi)$ & simple exploration process (started at $(v,\varphi)\in V_n\times S^1$) \\
$\tau^{X(Y)}(v,\varphi)$ & closing time of (simple) exploration started from $(v,\varphi)$ \\
$\{\cF_t\}_{t\geq 0}$ & natural filtration of the exploration process \\
$\cI^{X(Y)}_t$ & number of times $X_t$ (resp. $Y_t$) traverses a link \\
$\cJ^{X(Y)}_t$ & number of links discovered by $X_t$ (resp. $Y_t$) \\
$\cL^{X(Y)}_t$ & number of windings of $X_t$ (resp. $Y_t$) around $S^1$ \\
$\cK^{X(Y)}_t$ & number of visits to $\varphi=0\in S^1$ by $X_t$ (resp. $Y_t$) \\
$\ell_k$ & frontier times of the simple exploration \\ 
$\Delta_k$ & $\ell_{k+1}-\ell_k$ \\
$c_\omega(v,k)$ & set of the next $k$ vertices in a cycle starting from $v$  \\
$c^{\uparrow(\downarrow)}_{\omega}(v,k)$ & number of vertices in
                                           $c_\omega(v,k)$ passed in
                                           positive or negative direction \\ 
$\bal_\omega(v,k)$ & $\big||c_{\omega}^{\uparrow}(v,k)|-|c_{\omega}^{\downarrow}(v,k)|\big|$ \\
PD($\theta$) & Poisson--Dirichlet distribution with parameter $\theta$ \\
$\PD,\cZ$ & samples from PD($\theta$) \\
\end{tabular}

\end{center}

\newpage 

\section{Large cycles}
\label{sec:large}

In this section we
show that, for $\b>1$,
with high probability
there are cycles of length of order $n$.  The precise statement
appears in Lemma \ref{large-lem} at the end of the section.
The argument is a minor adaptation of Schramm's
\cite[Section~2]{schramm}.  

As in \cite{schramm}, we will actually work not with configurations $\om$ sampled from the Poisson measure
$\PP_\b$, but instead with configurations constructed sequentially
one link at a time.  
Given any configuration $\om\in\Om$, note that the
set of cycles $\cycles_\om$ only depends on the relative order of the
links of $\om$ as well as their position relative to $0\in S^1$, 
but not on their precise $S^1$-coordinates.  
Given $\omega\in\Omega$, let us
order its elements (links) with respect to the $S^1$-coordinate, 
namely we write
$\om=\{(e_{1},\ph_{1},m_1),\ldots,(e_{|\om|},\ph_{|\om|},m_{|\om|})\}$,
with $0<\ph_{1}<\ldots<\ph_{|\om|}<1$ (we can assume that there are no
distinct links with $\ph_i = \ph_j$, since under $\PP_\b$ this occurs
with probability $1$). 
We denote by $\vec{\om}=((e_{1},m_{1}),\ldots,(e_{|\om|},m_{|\om|}))$
 the ordered list of links with $S^1$-coordinates suppressed.  
With a slight abuse of terminology we will also refer to the 
entries $(e_{i},m_{i})$ of $\vec\om$ as links.
As noted above, $\cycles_\om$ is a function
of $\vec\om$ only, hence we may write $\cycles_{\vec\om}$.

In the rest of this section we will work with a random $\vec\om$
obtained by sequentially laying down a fixed number $t$ of random
links.  More precisely, first let $e_1$ be chosen uniformly from the
edge-set $E_n$ and let $m_1\in\{\cross,\dbar\}$ be chosen independently
of $e_1$,  with probability $\nu$ for $\cross$.  Next,
given the first $s$ links $(e_1,m_1),\dotsc,(e_s,m_s)$, 
we select $e_{s+1}$ uniformly from $E_n$ and the mark 
$m_{s+1}\in\{\cross,\dbar\}$, with
probability $\nu$ for $\cross$, independently of each other and of the
previous choices.   Write 
$\vec\om_s=((e_1,m_1),\dotsc,(e_s,m_s))$ and let
$\cycles_s:=\cycles_{\vec\om_s}$ denote
the set of cycles after $s\leq t$ steps.
Note that, if
$t$ is taken to be Poisson-distributed with 
mean $\tfrac\b {n-1}\binom{n}{2}=\tfrac\b 2 n$, then 
$\cycles_t$ is equal in distribution to $\cycles_\om$
for $\om$ sampled from $\PP_\b$.
Due to concentration properties of the Poisson-distribution, there
is very little difference between $\cycles_t$ for 
$t=\floor{\tfrac\b 2 n}$ on the one hand, and 
$\cycles_\om$ for $\om$ sampled from $\PP_\b$ on the other.
We will not make this statement more precise at this point, deferring
this to later (see Section \ref{sec:proofofmainthm}). 

We now describe in detail the effect that appending the next link $(e_{s+1},m_{s+1})$ to 
$\vec\om_s$ has on the cycles, that is, the transition $\cycles_s\to\cycles_{s+1}$.  
See Figures \ref{fig:cycles-merge}, \ref{fig:cycles-split}
and \ref{fig:cycles-twist} for illustrations in the case when
$m_{s+1}=\dbar$. 
 \begin{figure}[h!]
\includegraphics{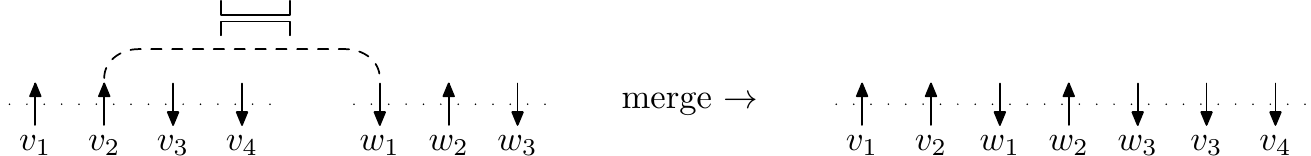}
\centering
\caption{Example of two cycles merging.
}
\label{fig:cycles-merge}
\end{figure}

 \begin{figure}[h!]
\includegraphics{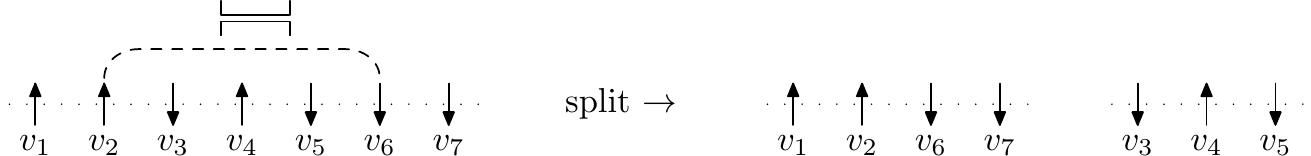}
\centering
\caption{Example of a cycle splitting.}
\label{fig:cycles-split}
\end{figure}

 \begin{figure}[h!]
\includegraphics{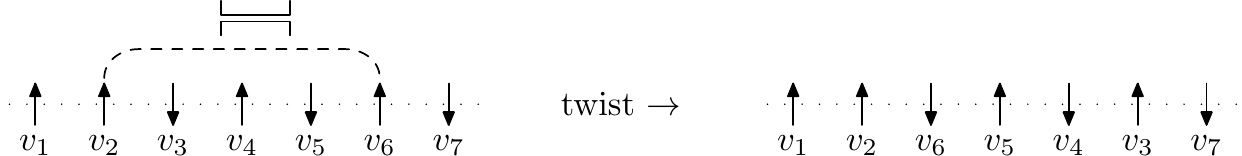}
\centering
\caption{Example of a cycle `twisting', i.e.\ it stays intact but is
  restructured internally.}
\label{fig:cycles-twist}
\end{figure}
For $d\in\{\up,\dn\}$ we write $-d$ for the reversed arrow.
We have the following:
\begin{itemize}[leftmargin=*]
\item If the endpoints of $e_{s+1}$ are in different cycles of
$\cycles_s$ then those cycles merge.
\item If the endpoints are in the same cycle $\cC$ then the result
depends on the mark $m_{s+1}$ in the following way.  Let us assume
that $\cC=(v_1^{d_1},v_2^{d_2},\dotsc,v_\ell^{d_\ell})$ and that
$e_{s+1}=\{v_i,v_j\}$ where $i<j$.
Without loss of generality (since directions are defined up to an
overall reversal) we may assume
that $d_i=\up$.
\begin{itemize}
\item If $m_{s+1}=\cross$ 
is a cross then $\cC$ splits if and only if
$d_j=\up$; in this case the two resulting cycles $\cC'$ and $\cC''$ are given by:  
\[
\cC'=(v_1^{d_1},\dotsc,v_i^{d_i},v_{j+1}^{d_{j+1}},\dotsc,v_\ell^{d_\ell}),
\qquad
\cC''=(v_{i+1}^{d_{i+1}},v_{i+2}^{d_{i+2}}\dotsc,v_{j}^{d_j}).
\]
On the other hand, if $d_j=\dn$ then $\cC$ is not split;
instead it is modified into $\cC'$ where
\[
\cC'=(v_1^{d_1},\dotsc,v_i^{d_i},v_{j-1}^{-d_{j-1}},
v_{j-2}^{-d_{j-2}},\dotsc,v_{i+1}^{-d_{i+1}},v_j^{d_j},
\dotsc,v_\ell^{d_\ell}).
\]
\item If $m_{s+1}=\dbar$ 
is a bar then $\cC$ splits if and only if
$d_j=\dn$; in this case the two resulting cycles $\cC'$ and $\cC''$ are given by:  
\[
\cC'=(v_1^{d_1},\dotsc,v_i^{d_i},v_{j}^{d_{j}},\dotsc,v_\ell^{d_\ell}),
\qquad
\cC''=(v_{i+1}^{d_{i+1}},v_{i+2}^{d_{i+2}}\dotsc,v_{j-1}^{d_{j-1}}).
\] 
On the other hand, if $d_j=\up$ then $\cC$ is not split but
modified into $\cC'$ where
\[
\cC'=(v_1^{d_1},\dotsc,v_i^{d_i},v_{j}^{-d_{j}},v_{j-1}^{-d_{j-1}},
\dotsc,v_{i+1}^{-d_{i+1}},v_{j+1}^{d_{j+1}},
\dotsc,v_\ell^{d_\ell}).
\]
\end{itemize}
\end{itemize}

Note that
the edge $e_{s+1}$ may be selected by first choosing $v_i$ uniformly
from $E_n$ and then $v_j$ uniformly from 
$E_n\sm\{v_i\}$.  In particular we see that,
just as in \cite[Lemma 2.1]{schramm}, we have:

\begin{lemma}\label{le:splits} 
In the step from $\vec\om_s$ to $\vec\om_{s+1}$,
the probability that some cycle is split
into two cycles, with at least one containing at
most $k$ vertices, is at most $2k/(n-1)$.
\end{lemma}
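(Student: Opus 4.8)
The plan is to condition on the choice of the first vertex $v_i$ (recall that $e_{s+1}$ can be generated by picking $v_i$ uniformly from $V_n$ and then $v_j$ uniformly from $V_n\sm\{v_i\}$) and bound the probability that the \emph{second} choice $v_j$ produces a ``small'' split. A split can only occur when both endpoints lie in the same cycle $\cC$, so fix the cycle $\cC=(v_1^{d_1},\dotsc,v_\ell^{d_\ell})$ containing $v_i$; if $v_i$ is a fixed point (a cycle of length $1$) or if $v_j$ lands outside $\cC$, no split happens at all. So we may assume $v_i=v_a$ for some position $a$ in $\cC$, and we ask: for how many positions $b$ (i.e.\ choices $v_j=v_b$, $b\neq a$) does appending $(e_{s+1},m_{s+1})$ split $\cC$ into two cycles one of which has at most $k$ vertices?

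The key observation is that, reading off the explicit split formulas in the bulleted list above, whenever a split occurs the two resulting cycles are $(v_1^{d_1},\dotsc,v_i^{d_i},v_{j+1}^{d_{j+1}},\dotsc)$ and $(v_{i+1}^{d_{i+1}},\dotsc,v_j^{d_j})$ (in the cross case with $d_j=\up$), and analogously with $j$ replaced by $j-1$ in the bar case — in every case the ``inner'' cycle $\cC''$ consists of a contiguous block of positions strictly between $i$ and $j$ (of size $|j-i|-1$ or $|j-i|$ depending on the mark), and the ``outer'' cycle $\cC'$ has the complementary size. Hence $\cC$ splits with one side of size $\le k$ precisely when the cyclic distance between position $a$ and position $b$ along $\cC$ is $\le k$ on one of the two sides, i.e.\ $b$ lies within cyclic distance $k$ of $a$ (on either side, and allowing for the off-by-one from the mark). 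There are at most $2k$ such positions $b$ (at most $k$ on each side; the $\pm1$ ambiguity is absorbed since we only need an upper bound), and each is chosen with probability $1/(n-1)$. Therefore, conditionally on $v_i$ and on all of $\vec\om_s$, the probability of a small split is at most $2k/(n-1)$; averaging over $v_i$ preserves the bound.

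I would present this as: condition on $\cF$ generated by $\vec\om_s$ and by the choice of $v_i=v_a$; if $v_a$'s cycle has length $\ell$, the number of $v_j\neq v_a$ that yield a split with a part of size $\le k$ is at most $\min(2k,\ell-1)\le 2k$ (here one just checks, case by case on $m_{s+1}$ and on $d_j$, that a split happens only when $v_j$ is among the $k$ nearest neighbours of $v_a$ on either side of the cycle); since $v_j$ is uniform on the $n-1$ vertices $\neq v_a$, the conditional probability is $\le 2k/(n-1)$, and taking expectations gives the claim. I do not anticipate a genuine obstacle here — this is essentially Schramm's \cite[Lemma~2.1]{schramm} argument — but the one point requiring care is the bookkeeping of the split formulas: one must verify that in \emph{both} the cross and the bar case (and in the sub-case $i<j$ versus the role of $d_j$) the two resulting cycles always have sizes $\{|j-i|-1,\ \ell-|j-i|+1\}$ or $\{|j-i|,\ \ell-|j-i|\}$, so that ``one part small'' is equivalent to ``$v_i,v_j$ close along $\cC$''. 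This is immediate from the displayed formulas above but should be stated explicitly so the reader sees why the $2k$ counting is correct.
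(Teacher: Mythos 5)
Your proof is correct and follows exactly the approach the paper takes (which itself just invokes Schramm's Lemma~2.1 after noting that $e_{s+1}$ may be chosen by picking $v_i$ uniformly from $V_n$ and then $v_j$ uniformly from $V_n\setminus\{v_i\}$). You have simply spelled out the counting step that the paper leaves implicit — verifying from the split formulas that a small split requires $v_j$ to lie within cyclic distance $k$ of $v_i$ on one of the two sides, giving at most $2k$ admissible choices out of $n-1$ — which is the correct bookkeeping.
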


Building on this, and replicating the arguments of Schramm
\cite{schramm}, we obtain the following sequence of lemmas.
Lemma \ref{lem:EV} is proved
exactly as \cite[Lemma 2.2]{schramm}.  Lemma \ref{lem:growth}
is a version of \cite[Lemma 2.3]{schramm} and is proved in a similar way,
see \cite[Lemma 5.1]{A-K-M}
for details.  Briefly,
the reason that these results hold exactly as in \cite{schramm} is
that, firstly, if the endpoints of $e_{s+1}$ are in
different cycles then those cycles always merge, and,
secondly, the cycle may or may not split if the
endpoints are in the same cycle.  This means that 
both the upper bounds on the probability of
splitting, as well as the lower bounds on the probability of merging, 
are identical to \cite{schramm}.  This is all that we need.

In the following statements
we consider a random graph $G^s$ with vertex set $V_n$
obtained by placing an
edge between a pair $\{i,j\}$, $i\neq j$, if in $\vec\om_s$ there is at
least one link $(e_r,m_r)$, $r\leq s$, such that $e_r=\{i,j\}$.
We write  $V^{s}_G(k)$ for the set of vertices in connected components
of $G^s$ containing at least $k$ vertices.  Similarly,
we write $V_{\cycles}^{s}(k)$ for the set of vertices belonging to cycles of
$\cycles_s$ which are of length at least $k$.

\begin{lemma}\label{lem:EV}
For any $s\geq0$,
\begin{equation} 
\E|V^{s}_G(k)\setminus V_{\cycles}^{s}(k)|\leq
\frac{4sk^2}{n-1}
\end{equation}
\end{lemma}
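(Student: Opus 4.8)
The plan is to dominate $|V^{s}_G(k)\setminus V_{\cycles}^{s}(k)|$ by an auxiliary ``potential'' that, unlike $|V^{s}_G(k)\setminus V_{\cycles}^{s}(k)|$ itself, can only increase when a short cycle is split off. For a configuration $\vec\om_s$ and a component $D$ of $G^s$, write $c(D)$ for the number of cycles of $\cycles_s$ contained in $D$ (these cycles partition $D$, since every vertex lies in exactly one cycle and each cycle is connected in $G^s$), and set
\[
W_s:=\#\{\cC\in\cycles_s:\ |\cC|<k\ \text{and}\ c(D_\cC)\ge 2\},
\]
where $D_\cC$ is the component of $G^s$ containing $\cC$. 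I first note that $|V^{s}_G(k)\setminus V_{\cycles}^{s}(k)|\le k\,W_s$: if $v\in V^{s}_G(k)\setminus V_{\cycles}^{s}(k)$, then the component $D$ of $v$ has $|D|\ge k$ while the cycle $\cC$ of $v$ has $|\cC|<k$, so $D$ cannot consist of the single cycle $\cC$ and hence $c(D)\ge2$; thus $\cC$ is one of the cycles counted by $W_s$, and as each of these has fewer than $k$ vertices the bound follows. It therefore suffices to prove $\E W_s\le \tfrac{4sk}{n-1}$.

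The core step is to show that, along $\vec\om_s\to\vec\om_{s+1}$,
\[
W_{s+1}-W_s\ \le\ 2\,\Ind{B_{s+1}},
\]
where $B_{s+1}$ is the event that at step $s+1$ some cycle is split into two pieces, at least one of which has at most $k$ vertices. By the transition rules recalled above, appending $(e_{s+1},m_{s+1})$ either (i) merges two distinct cycles into one cycle whose vertex set is their union (and merges their components of $G^s$ if these were distinct), or (ii) splits one cycle into two cycles whose vertex sets partition it, or (iii) only internally restructures a cycle, leaving all cycles unchanged as vertex sets and leaving every $c(D)$ unchanged. Case (iii) does not change $W$. In case (ii) at most two new cycles are created, so $W$ increases by at most $2$, and it can increase only if one of the two new pieces has size $<k$, in which case $B_{s+1}$ occurs; this gives the bound in case (ii). In case (i), write $\cC_1,\cC_2$ for the merged cycles and $D_1,D_2$ for their components in $G^s$. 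If $D_1=D_2=:D$, then $c(D)$ drops by one and $\cC_1,\cC_2$ are replaced by the single larger cycle $\cC_1\cup\cC_2$, so the number of size-$<k$ cycles of $D$ — counted in $W$ only while $c(D)\ge2$ — does not increase, and $W_{s+1}\le W_s$. If $D_1\ne D_2$, put $D:=D_1\cup D_2$; then $c(D)=c(D_1)+c(D_2)-1$, and the cycles of $D$ are $\cC_1\cup\cC_2$ together with the cycles of $D_1$ other than $\cC_1$ and those of $D_2$ other than $\cC_2$. Going through the cases $c(D_1)=c(D_2)=1$; exactly one of $c(D_1),c(D_2)$ equal to $1$; and $c(D_1),c(D_2)\ge2$ — and using $|\cC_1\cup\cC_2|\ge\max(|\cC_1|,|\cC_2|)$ — one checks in each case that the number of counted size-$<k$ cycles does not increase, so again $W_{s+1}\le W_s$. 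This proves the displayed bound.

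Granting this, telescoping from $W_0=0$ gives $W_s\le 2\sum_{r=1}^{s}\Ind{B_r}$, so by Lemma~\ref{le:splits} (which bounds $\P(B_r)$ by $2k/(n-1)$),
\[
\E W_s\ \le\ 2\sum_{r=1}^{s}\P(B_r)\ \le\ 2s\cdot\frac{2k}{n-1}\ =\ \frac{4sk}{n-1},
\]
and hence $\E|V^{s}_G(k)\setminus V_{\cycles}^{s}(k)|\le k\,\E W_s\le \tfrac{4sk^2}{n-1}$, which is the claim.

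I expect the main obstacle to be the case-(i) analysis showing $W_{s+1}\le W_s$ when two components merge: the quantity $|V^{s}_G(k)\setminus V_{\cycles}^{s}(k)|$ genuinely jumps up at such a step (when a small component carrying several short cycles is absorbed into a large one), so one really does need the passage to $W_s$, whose point is that the relevant ``badness'' was already created — and paid for — earlier, at the short split that produced those short cycles. Checking all the merge sub-cases carefully is the heart of the matter; everything else follows from Lemma~\ref{le:splits} together with the elementary inequality $|V^{s}_G(k)\setminus V_{\cycles}^{s}(k)|\le k\,W_s$.
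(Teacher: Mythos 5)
Your proof is correct, and it takes the same amortized/charging approach as Schramm's Lemma~2.2, which the paper defers to without reproducing. The essential and somewhat delicate point is your choice of potential: you count \emph{cycles} (small cycles living in multi-cycle components), not vertices, and then pay the extra factor $k$ up front via $|V^{s}_G(k)\setminus V_{\cycles}^{s}(k)|\le k\,W_s$. It is worth noting why this is not just a cosmetic choice. The more ``obvious'' vertex-weighted potential $W'_s:=\sum_{\cC}|\cC|\,\Ind{|\cC|<k,\ c(D_\cC)\ge 2}$, which would dominate $|V^{s}_G(k)\setminus V_{\cycles}^{s}(k)|$ directly, in fact \emph{fails}: in a cross-component merge where $c(D_1)=1$, $c(D_2)\ge 2$ and $|\cC_1\cup\cC_2|<k$, the contribution jumps from $|\cC_2|$ to $|\cC_1|+|\cC_2|$, so $W'$ can increase at a pure merge. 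Your unweighted count survives precisely because $\Ind{|\cC_1\cup\cC_2|<k}\le\Ind{|\cC_2|<k}$, whereas $|\cC_1\cup\cC_2|\not\le|\cC_2|$. I checked the split case and all three merge sub-cases ($c(D_1)=c(D_2)=1$; exactly one equal to $1$; both $\ge 2$), as well as the same-component merge, and the inequality $W_{s+1}-W_s\le 2\,\Ind{B_{s+1}}$ holds deterministically in each. Combined with Lemma~\ref{le:splits} and the telescoping, this yields exactly the claimed $4sk^2/(n-1)$, matching the constant structure in Schramm's bound.
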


\begin{lemma} \label{lem:growth}
Let $t_0\in \NN$, $\delta \in (0,1]$, 
$\varepsilon \in (0,1/8)$ and $j\in \NN$ be such that 
$ 2^j \le \varepsilon \delta n$. 
Assume that the following conditions hold in the transition from
$\vec\om_s$ to $\vec\om_{s+1}$:
\begin{enumerate}[leftmargin=*]
\item there exists $c_1 > 0$ such that for any 
$s,k \in \NN$, 
we have
\begin{align*} 
\PP(\text{some }\cC\in\cycles_s\text{ is split
into }\cC', \cC'' \text{ s.t.\ }
\min(|\cC'|, |\cC''|)\leq k \mid \cycles_s) \leq c_1 \frac{k}{n}.
\end{align*}
\item there exists $c_2 > 0$ such that for any $s \in \NN$ and any two
cycles $\cC', \cC'' \in \cycles_s $ 
we have
\begin{equation*} 
\PP(\cC', \cC'' \text{ are merged} \mid \cycles_s) \geq c_2
\frac{|\cC'||\cC''|}{n^2}.
\end{equation*}
\end{enumerate} 
Then there exist $c_3, c_4>0$, depending only on $c_1,
c_2$, such that if
\begin{equation}\label{eq:growing_interval} t_1 := t_0 + \lceil\Delta
t\rceil, \quad \Delta t = c_3
\delta^{-1}\frac{n}{2^j}\log_2\left(\frac{n}{2^j}\right),
\end{equation} then
\begin{align}\label{eq:Schramm-ineq} 
\E \Big(|
V_{\cycles}^{{t_0}}(2^j) \setminus
V_{\cycles}^{{t_1}}(\varepsilon \delta n)|\; \big|\;
\cycles_{t_0} \Big) 
\one_{\{ |V_{\cycles}^{{t_0}}(2^j)| \geq \delta n\}} 
\le c_4 \delta^{-1}\varepsilon |\log_2(\varepsilon\delta)|
n.
\end{align}
\end{lemma}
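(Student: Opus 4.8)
The plan is to follow Schramm's proof of \cite[Lemma~2.3]{schramm} (with the variant bookkeeping of \cite[Lemma~5.1]{A-K-M}); here I sketch the structure and indicate where hypotheses (1)--(2) enter, the point being that only these one-sided bounds are used. Throughout one conditions on $\cycles_{t_0}$ and argues on the event $\{|V_{\cycles}^{t_0}(2^j)|\ge\delta n\}$. Call a vertex \emph{marked} if it lies in a cycle of $\cycles_{t_0}$ of size $\ge 2^j$, and call a cycle of $\cycles_s$ \emph{marked}, for $s\ge t_0$, as long as it contains a marked vertex and has size $\ge 2^j$; let $M(s)$ be the number of vertices in marked cycles, so $M(t_0)\ge\delta n$, and set $W\eqdef V_{\cycles}^{t_0}(2^j)$, which is exactly the set of marked vertices. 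A vertex of $W$ can fail to belong to $V_{\cycles}^{t_1}(\varepsilon\delta n)$ only if \emph{(i)} its cycle has size $<2^j$ at some time in $[t_0,t_1]$ (so it loses its mark), or \emph{(ii)} its cycle stays of size in $[2^j,\varepsilon\delta n)$ throughout $[t_0,t_1]$; I would estimate these two contributions separately.

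Contribution (i) is controlled by hypothesis (1) alone. A marked cycle loses its mark only through a split producing a part of size $<2^j$, and such an event removes fewer than $2^{j+1}$ vertices from the marked set; by hypothesis (1) with $k=2^j$ it has probability $\le c_1 2^j/n$ at each of the $\Delta t$ steps, so the expected number of vertices ever demoted is at most
\[
2^{j+1}\cdot c_1\tfrac{2^j}{n}\cdot\Delta t \;=\; 2c_1c_3\,\delta^{-1}\,2^{j}\log_2\!\big(\tfrac{n}{2^j}\big).
\]
Since $2^j\le\varepsilon\delta n<n/2$ and $x\mapsto x\log_2(n/x)$ is increasing on $(0,n/2)$, the right-hand side is $\le 2c_1c_3\,\delta^{-1}\varepsilon\delta n\,|\log_2(\varepsilon\delta)| = 2c_1c_3\,\varepsilon|\log_2(\varepsilon\delta)|\,n$, which is already of the order appearing in \eqref{eq:Schramm-ineq}; in particular this term essentially \emph{sets} the bound, and the same estimate shows $M(s)$ stays within $o(\delta n)$ of $M_0$ in conditional mean.

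Contribution (ii) is where hypothesis (2) enters: the marked cycles undergo a coalescent in which any two marked cycles $\cC',\cC''$ merge with probability $\ge c_2|\cC'|\,|\cC''|/n^2$ per step (and larger cycles also swallow smaller ones), so a marked cycle of size $a<\varepsilon\delta n\le\tfrac12 M(s)$ is absorbed into a strictly larger cycle with probability $\ge c_2 a(M(s)-a)/n^2\gtrsim c_2\delta a/n$ per step, and $Q(s)\eqdef\sum_{\cC\text{ marked}}|\cC|^2$ jumps up by $2|\cC'||\cC''|$ whenever two marked cycles merge. This yields a drift bound of the form $\EE(Q(s+1)-Q(s)\mid\cF_s)\gtrsim c_2 Q(s)^2/n^2$ as long as no marked cycle has reached size $\varepsilon\delta n$; since $Q(t_0)\ge 2^j\delta n$, comparison with the ODE $\dot q=\mathrm{const}\cdot q^2/n^2$ (which escapes in time $O(n^2/q(0))=O(\delta^{-1}n/2^j)$) indicates that within $O(\delta^{-1}n/2^j)$ steps some marked cycle becomes macroscopic, after which the union of marked cycles of size $\ge\varepsilon\delta n$ absorbs any remaining marked cycle of size $b$ with probability $\ge c_2(\varepsilon\delta n)b/n^2$ per step, so the marked mass $R(s)$ still in cycles of size $<\varepsilon\delta n$ obeys $\EE(R(s+1)\mid\cF_s)\le(1-c_2\varepsilon\delta/n)R(s)+(\text{lower order})$. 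Organising the budget $\Delta t$ dyadically --- it affords $\Theta(\log_2(n/2^j))$ ``doubling phases'', which is $|\log_2(\varepsilon\delta)|$ more than the $\Theta(\log_2(\varepsilon\delta n/2^j))$ phases needed to reach scale $\varepsilon\delta n$ --- one uses the surplus phases to push the probability of failure of the $Q$-growth below $O(\varepsilon\delta)$, and the geometric decay above then brings $\EE(R(t_1))$ below $\varepsilon n$; here one checks (using $n/2^j\ge(\varepsilon\delta)^{-1}$) that $c_3=c_3(c_1,c_2)$ can be taken so that all time requirements fit inside $\Delta t$. Adding the three contributions --- $O(\varepsilon n)$ from marked mass that never became macroscopic, $2c_1c_3\,\varepsilon|\log_2(\varepsilon\delta)|n$ from (i), and $O(\varepsilon\delta\cdot n)$ from the negligible failure event --- gives \eqref{eq:Schramm-ineq} with a $c_4$ depending only on $c_1,c_2$.

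The genuinely technical step, which I expect to be the main obstacle, is making the coalescent hitting-time estimate rigorous: $Q(s)$ is only an approximate submartingale (bad splits, controlled by hypothesis (1), contribute a negative drift which must be shown negligible on the relevant horizon, using that one stops before any cycle reaches size $\varepsilon\delta n$), and the quadratic drift has to be converted --- via optional stopping together with a concentration estimate exploiting the surplus time budget --- into a quantitative bound on the time for a marked cycle to become macroscopic and for the remaining marked mass to be swept into macroscopic cycles. This is exactly the computation carried out in \cite[Section~2]{schramm} and \cite[Lemma~5.1]{A-K-M}, and, as noted before the statement, it nowhere uses a matching upper bound on splitting or any lower bound on merging other than (2).
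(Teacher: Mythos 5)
Your proposal follows the same route as the paper: the paper itself does not prove this lemma but refers the reader to Schramm's Lemma~2.3 and to Lemma~5.1 of Adamczak--Kotowski--Mi\l{}o\'s, and its one-paragraph justification (given just after the statement) is exactly your observation that Schramm's argument only uses a one-sided upper bound on splitting and a one-sided lower bound on merging, which hypotheses (1) and (2) supply. Your sketch of that argument is a fair outline --- your treatment of contribution (i) is essentially the correct computation --- though one small imprecision is the asserted drift $\EE(Q(s+1)-Q(s)\mid\cF_s)\gtrsim c_2 Q(s)^2/n^2$ for $Q=\sum_{\cC}|\cC|^2$: the expected increment is $\gtrsim c_2 n^{-2}\bigl(Q^2-\sum_{\cC}|\cC|^4\bigr)$, and the bracket can degenerate when a single cycle carries most of $Q$, which is one reason Schramm's actual bookkeeping tracks the marked mass across dyadic size levels $2^k$ rather than the second moment directly; since you explicitly defer the rigorous version of the coalescent hitting-time estimate to those references, this does not change the verdict.
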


Note that in our case condition (1) is satisfied because of Lemma
\ref{le:splits}
and condition (2) is trivially satisfied. In notation of \cite[Lemma 5.1]{A-K-M} this corresponds to taking the stopping time $\tau = + \infty$, i.e. conditions (1) and (2) hold for all times $s \in \mathbb{N}$.
The lemma states that if, at some time $t_0$, enough vertices are in
reasonably large cycles (size $\geq 2^j$) 
then at some carefully chosen later time most
of these vertices will be in cycles of size of the order $n$.
Here one should think of $2^j$ as approximately $n^{1/4}$ and 
of $t_0\geq c_0n$ for some $c_0>\tfrac12$.
Then $|V^{t_0}_G(2^j)|\approx zn$
by the Erd\H{o}s--R\'enyi theorem, hence by Lemma~\ref{lem:EV}
also $|V^{t_0}_\cycles(2^j)|\approx zn$.
Note that if $2^j=n^{1/4}$ then $\Delta t$ is of the order
$n^{3/4}\log n\ll n$, thus for any $c>\tfrac12$ and $t_1\geq cn$
we may select $c_0>\tfrac12$ such that 
$t_0=t_1-\lceil \Delta t\rceil\geq c_0n$.

The final result of this section paraphrases
\cite[equation (2.4) in Lemma 2.4]{schramm}.
It  tells us that most of 
the vertices in $V_G^{t}$ (the largest connected component 
in $G^{t}$) belong to large cycles.
The proof is precisely as in \cite{schramm} as the only appeal
to the particular structure of the cycles is through invoking the
previous lemmas, which all hold as in \cite{schramm}. 
\begin{lemma} \label{large-lem}
Fix $c>1/2$, take $t\geq cn$, $t\in\NN$. 
There is some $C_2>0$ such that
for any $\eps\in(0,1)$, if $n$ is large enough we have
\be\label{large1}
\EE\big[ |V_G^t\sm V_{\cycles}^t(\eps n)|\big]
\leq C_2 \eps \log(\tfrac1\eps) n.
\ee
\end{lemma}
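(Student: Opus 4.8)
The plan is to follow Schramm's argument \cite[Lemma~2.4]{schramm}, using the three preceding lemmas as black boxes. Fix $c>1/2$ and a target $t\ge cn$. First I would choose a small auxiliary scale, say $2^j$ of order $n^{1/4}$, and an earlier time $t_0$ defined by running the recursion of Lemma~\ref{lem:growth} backwards from $t$: concretely, set $t_0 := t-\lceil\Delta t\rceil$ with $\Delta t$ as in \eqref{eq:growing_interval} (taking $\delta$ a fixed constant, e.g.\ $\delta$ slightly below the Erd\H{o}s--R\'enyi constant $z$). Since $\Delta t = O(n^{3/4}\log n) = o(n)$, we still have $t_0\ge c_0 n$ for some $c_0>1/2$ once $n$ is large, which is what is needed for the Erd\H{o}s--R\'enyi input below.

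Next I would establish that at time $t_0$ most vertices of the giant component already lie in cycles of size at least $2^j$. By the Erd\H{o}s--R\'enyi theorem, $|V_G^{t_0}|$ and indeed $|V_G^{t_0}(2^j)|$ are both $(z+o(1))n$ with high probability (components are either of size $O(\log n)\ll 2^j$ or the giant one). Combining this with Lemma~\ref{lem:EV} at time $s=t_0=O(n)$, the expected number of vertices in large components but \emph{not} in large cycles is $O(t_0 (2^j)^2/n) = O(n\cdot n^{1/2}/n)=O(n^{1/2})=o(n)$; hence with high probability $|V_{\cycles}^{t_0}(2^j)|\ge \delta n$. This verifies the indicator event in \eqref{eq:Schramm-ineq}.

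Then I would apply Lemma~\ref{lem:growth} with the chosen parameters (its hypotheses (1), (2) hold via Lemma~\ref{le:splits} and the trivial merge bound, as already noted in the text). This gives
\[
\EE\big[\,|V_{\cycles}^{t_0}(2^j)\sm V_{\cycles}^{t_1}(\eps\delta n)|\,\big|\,\cycles_{t_0}\big]\,\one_{\{|V_{\cycles}^{t_0}(2^j)|\ge\delta n\}}\;\le\; c_4\,\delta^{-1}\eps\,|\log_2(\eps\delta)|\,n,
\]
with $t_1=t_0+\lceil\Delta t\rceil=t$ by construction. Now I would assemble the pieces: writing $V_G^t\sm V_{\cycles}^t(\eps' n)$ (for a suitably rescaled $\eps'$, absorbing the constant $\delta$) as contained in the union of (a) $V_G^t\sm V_G^{t_0}(2^j)$ --- which is small by Erd\H{o}s--R\'enyi plus the monotonicity that an edge once present stays present, so $V_G^{t_0}\se V_G^{t}$ up to $o(n)$ vertices --- (b) $V_G^{t_0}(2^j)\sm V_{\cycles}^{t_0}(2^j)$, small by Lemma~\ref{lem:EV}, and (c) $V_{\cycles}^{t_0}(2^j)\sm V_{\cycles}^{t}(\eps\delta n)$, controlled by Lemma~\ref{lem:growth}. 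Taking expectations and bounding the error event (where the indicator fails) trivially by $n$ times its probability $o(1)$, the dominant term is the $O(\eps\log(1/\eps)n)$ coming from (c), yielding \eqref{large1} after renaming $\eps\delta\mapsto\eps$ and adjusting $C_2$.

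\textbf{Main obstacle.} The routine calculations are in reconciling the various scales and constants ($\eps$ versus $\eps\delta$, the backward choice of $t_0$, keeping $t_0\ge c_0 n$), and in handling the low-probability event where $|V_{\cycles}^{t_0}(2^j)|<\delta n$ so that Lemma~\ref{lem:growth} is vacuous; there one just pays an additive $o(n)$. The only genuinely model-specific point --- that splitting probabilities are bounded as in Schramm and merging always happens when endpoints lie in distinct cycles --- has already been dealt with in the lemmas quoted above, so no new idea is required; the proof is a bookkeeping exercise transcribing \cite[Lemma~2.4]{schramm}.
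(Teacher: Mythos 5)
Your proposal is correct and follows essentially the same route as the paper, which itself simply defers to Schramm's Lemma~2.4 (``The proof is precisely as in \cite{schramm}''). Choosing $2^j\asymp n^{1/4}$, setting $t_0=t-\lceil\Delta t\rceil$, feeding the Erd\H{o}s--R\'enyi/Lemma~\ref{lem:EV} input into the indicator hypothesis of Lemma~\ref{lem:growth}, and then decomposing $V_G^t\setminus V_{\cycles}^t(\eps\delta n)$ into the three pieces you list is exactly the Schramm bookkeeping the authors are invoking. One caveat worth flagging (which applies equally to the paper's bare statement): the step $\E|V_G^{t_0}(2^j)\setminus V_{\cycles}^{t_0}(2^j)|=O(t_0(2^j)^2/n)=o(n)$ tacitly uses $t_0=O(n)$; the lemma as stated allows $t\gg n$, where this bound degrades, but in the application $t$ is of order $n$ so this is harmless.
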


\section{Exploration processes}
\label{exploration-sec}

An important tool in proving Theorem \ref{thm:main}
is the \emph{exploration process}, which we will define in this
section.  
The exploration process 
is sometimes also called the
\emph{cyclic-time random walk}, see e.g.\ \cite{H2,H-H}.  
It will allow us to uncover the loop
containing some specified point $(v_0,\ph_0)\in V_n\times S^1$ at the
same time as we uncover the configuration $\om\in\Om$ itself.  
We will also define a process which we call the 
\emph{simple exploration process} which is easier to analyse 
and which may be coupled with the exploration process.
In this
section we work with a random $\om\in\Om$ 
sampled from the Poisson
measure $\PP_\b$ for some fixed $\b>1$
(the definitions will make sense for all $\b>0$).
Recall that $\nu\in[0,1)$ is
fixed throughout.

\subsection{Definitions}

The exploration process will be denoted
$X(\om)=(X_t(\om):t\geq0)$ and takes values in 
$V_n \times S^1 \times \{-1, +1\}$.  Recall that we identify $S^1$ with
the unit interval $[0,1)$ using periodic boundary conditions. 
We will write $X_t=(v_t,\ph_t,d_t)$.
Let $d_0\in\{-1,+1\}$ be an initial direction and set
$X_0=(v_0,\ph_0,d_0)$, where $(v_0,\ph_0)\in V_n\times S^1$.  
The process starts by traversing $\{v_0\}\times S^1$ at unit speed
in the direction specified by $d_0$, meaning that
$v_t=v_0$, $\ph_t=\ph_0+d_0t$ and $d_t=d_0$. This continues
until it either encounters a link of
$\om$, or it returns to its starting point, i.e.\
$(v_t,\ph_t)=(v_0,\ph_0)$.  If a link is
encountered first, say at time $t$ and with other endpoint in 
$\{w\}\times S^1$, then the process jumps
to $\{w\}\times S^1$ and proceeds in a
direction which depends on whether the link was a cross or a bar. That
is, we set $v_t=w$ and
$\ph_{t+s}=\ph_t+d_0s$ and $d_{t+s}=d_0$ if the link was a cross or
$\ph_{t+s}=\ph_t-d_0s$ and $d_{t+s}=-d_0$ if the link was a bar.  We define the process
to be right-continuous (c\`adl\`ag).  The process proceeds in this
way, traversing links and adjusting its direction accordingly, until
it returns to the starting point $(v_0,\ph_0,d_0)$.  We let
\begin{equation} \label{eq:ExplorationDeath}
\tau^{X}=\tau^X(v_0,\ph_0,d_0):=\inf\left\{t>0:X_{t}=X_0 \right\}
\end{equation} 
be the time when this happens.
After this time  the process is no longer useful to
us, but to be definite we declare that the process continues by
repeating itself periodically after time $\tau^X$.
Note that at time $\tau^X$, the loop containing $(v_0,\ph_0)$ has
been fully discovered.

Let us consider those links that,
by time $\tau^X$, have  been traversed by $X$ at least once.
Some of them have been traversed only once, others twice (no link can
be traversed more than twice before time $\tau^X$ as this would entail
visiting a previously visited point $(v_t,\ph_t,d_t)$).
We say that a link is \emph{discovered} at the time of
its first traversal, and \emph{backtracked} on its second traversal 
(if traversed twice). 
Let $\cJ^X_t(v,\varphi)$ denote the number of times the exploration
$X$, started at $(v,\varphi)$ and run for time $t$, has
discovered a  link (`jumped').  Let 
$\cI^X_t(v,\varphi)$ denote the number of times  it has
traversed some link, including backtracking.
Thus $\cJ^X_t(v,\varphi)\leq \cI^X_t(v,\varphi)\leq 
2\cJ^X_t(v,\varphi)$ for all $t\leq \tau^X$.
Next, define the \emph{history} of $X$ as 
\begin{equation}\label{eq:Xhistory}
H_{t}^{X}:=\{(v,\varphi)\in V_n\times S^1\;\big|\; \exists\; s\leq t 
\text{ s.t. } X_s=(v,\varphi,d) \text{ for some }d\in\{-1,1\}\}. 
\end{equation}
This is the set of points in $V_n\times S^1$ visited by $X$ up to time
$t$.   Finally, let 
$\left\{ \cF_{t}\right\} _{t\geq0}$ denote the natural
filtration of the exploration process, namely
 $\cF_t:=\s\big((X_s)_{0\leq s\leq t}\big)$,
 and $\bar{\mathcal{F}}_{t}\eqdef\bigcap_{s>t}\mathcal{F}_{s}$.

When $\om$ is randomly 
sampled from the Poisson measure $\PP_\b$
we may, thanks to the memorylessness of Poisson processes, 
construct (part of) $\om$ itself simultaneously with $X$.  This fact
is central to our approach.
We formulate the construction as a proposition.
In the following result we will be using a Poisson process $N$ on
$[0,\oo)$ and we will say that $N$ \emph{rings at time} $t$
if it has an arrival at that time.
\begin{proposition}[Construction of the exploration process]
\label{prop:expl}
Let
$v_0\in V_n$, 
$\varphi_{0}\in S^{1}$ and $d_{0}\in\left\{ -1,1\right\} $.
Consider the following independent objects:
\begin{itemize}[leftmargin=*]
\item a Poisson process $N=(N_t:t\geq0)$ with intensity 
$n\tfrac\beta{n-1}$,
\item a sequence $\left\{ v_{i}\right\} _{i\in\mathbb{N}}$ of
i.i.d.\ random variables distributed uniformly on $V_n$,
\item a sequence $\left\{ \xi_{i}\right\} _{i\in\mathbb{N}}$ of i.i.d.\
random variables taking values $\pm 1$
and satisfying $\PP(\xi_{i}=+1)=\nu$.
\end{itemize}
When $\om$ has law $\PP_\b$, then the law
of the  exploration $X$,
started at $X_0=(v_{0},\varphi_{0},d_{0})$, may be
constructed as follows:
\begin{enumerate}[leftmargin=*]
\item The process starts at $X_0:=(v_{0},\varphi_{0},d_{0})$ and
  initially only $\varphi_t$ changes, according to $\ph_t=\ph_0+d_0t$.
\item Whenever $N$ rings, say at time $t$, we inspect vertex $w=v_{N_{t}}$.
We have two cases:
\begin{enumerate}
\item If $(w,\ph_{t-})\in H^{X}_{t-}$ or
$w=v_{t-}$ then nothing happens and the process continues on, i.e.\ $X_t=X_{t-}$.
\item
Otherwise we set
$X_t=(w,\varphi_{t-},\xi_{N_{t}}d_{t-})$ and then 
$\varphi_t$ evolves 
according to $\varphi_{t+s}=\varphi_{t-}+\xi_{N_t}d_{t-}s$.
\end{enumerate}
\item Between successive rings of $N$ the process
may backtrack across previously discovered links.
More precisely, a backtrack occurs at time $t$ if there exists another time
$s<t$ such that $v_s\neq v_{s-}$ and either:
\begin{itemize}
\item $(v_{t-},\varphi_{t-})=(v_{s-},\varphi_{s-})$, in which case we
  set $X_t=(v_s,\varphi_{s},\xi_{N_{s}}d_{t-})$, or 
\item $(v_{t-},\varphi_{t-})=(v_{s},\varphi_{s})$, in which case we
  set $X_t=(v_{s-},\varphi_{s-},\xi_{N_{s}}d_{t-})$.
\end{itemize}
See Figure \ref{fig:backtrack}.
\end{enumerate} 
\end{proposition}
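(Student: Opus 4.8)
The plan is to exploit the memorylessness of the Poisson processes underlying $\PP_{\b}$: the exploration $X$ reveals $\om$ gradually, and at each stage the still-unexplored part of $\om$ is, conditionally on the past, a fresh marked Poisson point process with the same law. To formalise this I would attach to each time $t$ the \emph{explored region} $R_{t}\se E_{n}\times S^{1}$, namely the set of pairs $(\{v,w\},\ph)$ for which the trajectory $(X_{s})_{s\le t}$ already determines whether $\om$ has a link at $(\{v,w\},\ph)$: this includes every discovered or backtracked link, but also every $(\{v,w\},\ph)$ with $(v,\ph)$ or $(w,\ph)$ in the interior of an arc traversed by $X$ up to time $t$, since such a point can carry no link (otherwise $X$ would have turned there). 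The first and main task is to show that $R_{t}$ is an $\cF_{t}$-measurable ``stopping set'' and that, conditionally on $\cF_{t}$, the restriction of $\om$ to $(E_{n}\times S^{1})\sm R_{t}$ is again a Poisson process of intensity $\tfrac{\b}{n-1}$ per edge with i.i.d.\ marks (a cross with probability $\nu$), independent of $\cF_{t}$. This is the standard ``an exploration along a stopping set reveals a fresh Poisson process'' principle; I would prove it by induction over the arrival times $0=T_{0}<T_{1}<T_{2}<\dots$ of $N$, the inductive step being an application of the memorylessness of the Poisson processes.

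Granting this, the dynamics in items (1)--(3) are essentially forced. On an interval $[T_{i},T_{i+1})$ between two rings of $N$, the tip $X_{t}=(v_{t},\ph_{t},d_{t})$ moves through unexplored territory at unit speed; the only thing that can occur without $N$ ringing is that the tip reaches a previously visited point, and one checks this point must be either the start $(v_{0},\ph_{0},d_{0})$ --- in which case $t=\tau^{X}$ --- or an endpoint of an already-discovered link, forcing $X$ to retraverse that link (keeping or flipping its direction according as the stored mark is a cross or a bar), which is precisely the deterministic backtracking rule of item (3) (chains of consecutive backtracks are resolved identically). Hence the trajectory on $[T_{i},T_{i+1})$ is a deterministic function of $\cF_{T_{i}}$ that reveals no new link. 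At a ring, the construction bundles $n$ ``potential streams'' --- one per vertex $w\in V_{n}$, each of rate $\tfrac{\b}{n-1}$, which for $w\ne v_{t}$ is the $\PP_{\b}$-process on the edge $\{v_{t},w\}$ and for $w=v_{t}$ is a phantom --- into the single rate-$n\tfrac{\b}{n-1}$ clock $N$, a ring of $N$ then selecting $w$ uniformly on $V_{n}$ (the colouring property of Poisson processes). By the inductive hypothesis the streams not yet determined by $\cF_{t}$ are fresh and independent, so: if $w=v_{t-}$ the stream is a phantom and nothing happens; if $(w,\ph_{t-})\in H^{X}_{t-}$ then $(\{v_{t-},w\},\ph_{t-})\in R_{t-}$ already and, since $(v_{t-},\ph_{t-})$ is fresh, necessarily carries no link, so again nothing happens --- these are the two alternatives of (2a). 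Otherwise a genuine fresh link to $w$ is revealed, with a fresh mark $\xi_{N_{t}}$ ($=+1$, a cross, with probability $\nu$), and $X$ turns onto it exactly as in (2b). The thinning makes the net rate of genuine discoveries equal to $\tfrac{\b}{n-1}$ times the number of so-far-unexplored neighbours of $v_{t-}$ at height $\ph_{t-}$, as it should be, and the i.i.d.\ sequences $\{v_{i}\}$, $\{\xi_{i}\}$ of the statement are those consumed at the successive rings.

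Finally I would check that the process built this way is c\`{a}dl\`{a}g, obeys the definition of the exploration process started at $(v_{0},\ph_{0},d_{0})$, halts exactly at $\tau^{X}$, and --- since every step above is an identity in law given the past --- realises $(X_{t})$ jointly with $\om$ with the asserted law. The step needing the most care is the first: pinning down ``explored region'', ``stopping set'' and ``conditionally a fresh Poisson process'', together with the combinatorial fact that a fresh link can be met \emph{only} while the tip is in unexplored territory --- this is what makes the thinning rule (2a) correct and ensures that every visit of the tip to $H^{X}$ is accounted for by a backtrack or by loop-closure. Everything else --- the rate bookkeeping, the direction rules, c\`{a}dl\`{a}g-ness, and termination at $\tau^{X}$ --- is routine once that is in place.
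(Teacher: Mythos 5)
The paper does not actually prove Proposition~\ref{prop:expl}; it states that the construction ``is fairly standard and has been used previously in for example \cite{A-K-M,H2,H-H}, hence we do not give a proof.'' So there is no in-paper argument to compare against. Your sketch is precisely the standard argument those references rely on: declare a growing explored region $R_t$, verify it is a stopping set for the per-edge marked Poisson processes so that the unexplored part is conditionally a fresh $\PP_\b$-process, and use superposition/thinning to realise the per-edge rate-$\tfrac{\b}{n-1}$ streams through a single rate-$n\tfrac{\b}{n-1}$ clock together with uniform vertex and i.i.d.\ mark sequences. The two points that need the most care are both identified and handled correctly in your outline: (i) that during a fresh stretch the tip can only meet the history at an arc endpoint --- a finite set --- which is either the start (loop closure) or a link endpoint (forcing a deterministic backtrack); and (ii) that when a ring proposes $w$ with $(w,\ph_{t-})\in H^X_{t-}$ while $(v_{t-},\ph_{t-})$ is fresh, the earlier visit to $(w,\ph_{t-})$ already determined (a.s., by distinctness of link heights) that $\{v_{t-},w\}\times\{\ph_{t-}\}$ carries no link, so the thinning discards a phantom. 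I see no gap in the reasoning; the parts you flag as routine (measurability of $R_t$, c\`adl\`ag-ness, termination at $\tau^X$) are indeed routine once the stopping-set/fresh-Poisson step is in place, and the rate bookkeeping reproduces the intensity $\tfrac{\b}{n-1}|A^X_t|$ recorded in Lemma~\ref{le:intensity}.
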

 \begin{figure}[h!]
\includegraphics{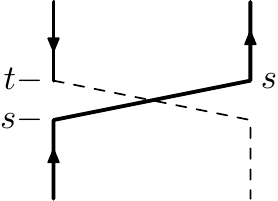} \qquad\qquad
\includegraphics{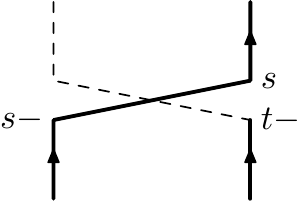} 
\centering
\caption{Two possibilities for backtracking a cross.  
Thick, solid lines belong to  the history $H^X_{t-}$,
dashed lines the future trajectory.
Left: $(v_{t-},\varphi_{t-})=(v_{s-},\varphi_{s-})$.  Right:
$(v_{t-},\varphi_{t-})=(v_{s},\varphi_{s})$.  
}
\label{fig:backtrack}
\end{figure}
The construction of Proposition \ref{prop:expl}
is fairly standard and has been used previously in
for example \cite{A-K-M,H2,H-H}, 
hence we do not give a proof. 
 Let us however draw attention to the condition that,
when $(w,\ph_{t-})\in H^{X}_{t-}$ or
$w=v_{t-}$, then the jump proposed by $N$ is canceled.  
This means that $X$ cannot jump to
a previously visited point
$(w,\ph)$, which effectively amounts to a
reduction of the intensity of jumps (see Lemma
\ref{le:intensity} below).

The main difficulty in analysing $X$ is that it may discover a new
link which takes it to a previously visited copy of $S^1$, 
i.e.\ it may jump at time $t$ to a point  $(w,\ph)$
satisfying $(\{w\}\times S^1)\cap H^X_s\neq\es$ 
for some $s<t$.  We refer to this as \emph{jumping to the history}.
In this case
$\{w\}\times S^1$ has already been partially explored, making
$X$ quite difficult to analyse directly.

To get around this problem we introduce what we call the
\emph{simple} exploration
process $Y=(Y_t:t\geq0)$, which is easier to analyse and 
(on time intervals which are not too long) 
can be coupled with the exploration process $X$.  Roughly speaking,
the idea is that for $Y$ we replace the vertex set $V_n$ with an
augmented vertex set $\NN\times V_n$, where the $\NN$-coordinate
increases on 
discovering a new link.  The interpretation is 
that each newly discovered link
brings us to a `fresh' circle $\{(k,v)\}\times S^1$.  

Below we give a detailed definition.  Notice that the wording is
very similar to Proposition \ref{prop:expl}, the main difference being
what happens at the jump times of $N$.
\begin{definition}[Simple exploration process]
\label{def:simpleexplorationprocess}
Let  $v_0\in V_n$, 
$\varphi_{0}\in S^{1}$ and $d_{0}\in\left\{ -1,1\right\} $.
We construct the simple exploration process 
$Y_t=(k_t,v_t,\varphi_t,d_t)$
as a c\`{a}dl\`{a}g process,  using the following independent objects:
\begin{itemize}[leftmargin=*]
\item a Poisson process $N=(N_t:t\geq0)$ with intensity 
$n\tfrac\beta{n-1}$,
\item a sequence $\left\{ v_{i}\right\} _{i\in\mathbb{N}}$ of
i.i.d. random variables distributed uniformly on $V_n$,
\item a sequence $\left\{ \xi_{i}\right\} _{i\in\mathbb{N}}$ of i.i.d.\
random variables taking values $\pm 1$
and satisfying $\PP(\xi_{i}=1)=\nu$. 
\end{itemize}
Using these sources of randomness the process is constructed as follows:
\begin{enumerate}[leftmargin=*]
\item The process starts at $Y_0:=(0,v_{0},\varphi_{0},d_{0})$ and
  initially only $\varphi_t$ changes, according to $\ph_t=\ph_0+d_0t$.
\item Whenever $N$ rings, say at time $t$, we inspect vertex $w=v_{N_{t}}$;
we have two cases:
\begin{enumerate}
\item If $w=v_{t-}$ then nothing happens and the process continues on,
  i.e.\ $Y_t=Y_{t-}$.
\item If $w\neq v_{t-}$ we set
$Y_t=(N_t,w,\varphi_{t-},\xi_{N_{t}}d_{t-})$ and then $\varphi_t$
evolves according to $\varphi_{t+s}=\varphi_{t-}+\xi_{N_t}d_{t-}s$.
\end{enumerate}
\item 
Between successive rings of $N$ the process
may backtrack across previously discovered links.
Now there is only one possibility for backtracking:
a backtrack occurs at time $t$ if there exists another time
$s<t$ such that $(k_s,v_s)\neq (k_{s-},v_{s-})$ and 
$(k_{t-},v_{t-},\varphi_{t-})=(k_s,v_{s},\varphi_{s})$, 
and then we
set $Y_t=(k_{s-},v_{s-},\varphi_{s-},\xi_{N_{s}}d_{t-})$.
\end{enumerate} 
\end{definition}

As we did for $X$ we let
\be\label{eq:simpleExplorationDeath}
\tau^{Y}:=\inf\left\{ t>0:Y_{t}=Y_0\right\}
\ee
be the first time at which the simple exploration process $Y$
arrives back at the starting point.  
Note that $\tau^Y$, in contrast to $\tau^X$, may take the value
$+\oo$,  see Proposition \ref{fact:loop_closure}.
If $\tau^Y<\oo$
we assume that the process $Y$ stops evolution after
$\tau^Y$.

For $Y$ we define the history by 
\begin{equation}\label{eq:Yhistory}
H_{t}^{Y}:=
\{(k,v,\varphi)\in \NN\times V_n\times S^1
\big| \exists s\leq t \text{ s.t. } Y_s=(k,v,\varphi,d) 
\text{ for some }d\in\{-1,1\}\}.
\end{equation}
By slight abuse of terminology, 
if $Y_0=(0,v,\varphi,d)$ for some $d\in\{-1,+1\}$
then we say that $Y$ \emph{started at} $(v,\varphi)$. As for $X$ we denote by $\cJ^Y_t(v,\varphi)$ 
(respectively, $\cI^Y_t(v,\varphi)$) the number of times
the simple exploration process has discovered 
(respectively, traversed) a link when 
started at $(0,v,\varphi)$ and run for time $t$.
We denote by $Y'$ 
the restriction of $Y$ to $V_n\times S^1\times\{-1,+1\}$, that is if
$Y_t=(k,v,\ph,d)$ then $Y_t'=(v,\ph,d)$.

The important point which makes $Y$ simpler to analyse than $X$ is
that, each time $Y$ discovers a new link 
(i.e.\ $N$ rings and $v_{N_t} \neq v_{t-}$), we set
$k_t$ to a previously unused value, namely $N_t$.
This means that $Y$, by
construction, can never jump to its history.  Crucially, it does still
backtrack across previously discovered links.

\subsection{Coupling with the simple exploration process}

The following lemma shows that when $\om$ is randomly sampled from
$\PP_\b$, one can couple the exploration process $X$ and the simple
exploration process $Y$ so that they evolve in the same way on a
sufficiently short time scale. One should think of $T=o(n^{1/2})$.

\begin{lemma}[Coupling with the simple exploration process]
\label{fact:coupling}
Fix $T>0$.
Let $X$ be the exploration process 
and let $\sigma$ be a stopping time
with respect to the filtration
$\{\bar{\mathcal{F}}_t\}_{t\geq 0}$ such that $X$ 
jumps to a previously unvisited vertex 
at time $\sigma$.  Conditionally on $\bar{\mathcal{F}}_{\sigma}$,
there exists a coupling $\PP$ of
a process $\tilde{X}$ with a process $Y$ such that:
\begin{enumerate}[leftmargin=*]
\item 
The process $Y$ is
a simple exploration starting from 
$Y_{0}=(0,X_{\sigma}) = (0, \tilde v_0, \tilde\varphi_0, \tilde d_0)$
where
$X_{\sigma} = (\tilde v_0, \tilde\varphi_0, \tilde d_0)$.
\item $\PP\big(\big\{ \tilde{X}_{t}\big\} _{t\geq0}\in\cdot\mid
 \bar{\mathcal{F}}_{\sigma}\big)
=\PP\big(\left\{ X_{t+\sigma}\right\}_{t\geq0}\in\cdot\mid
 \bar{\mathcal{F}}_{\sigma}\big)$.
\item 
$\PP\big(\forall_{t<\tau^{Y}\wedge T}\;\tilde{X}_{t}=Y_{t}' \mid
 \bar{\mathcal{F}}_{\sigma}\big)
\geq1-4\b T (\cJ^X_\s+\b T)/n$.
\end{enumerate}
\end{lemma}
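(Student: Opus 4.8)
The plan is to build $\tilde X$ and $Y$ on a \emph{common} source of randomness and to show they agree until the first ``bad'' event, then estimate its probability. Concretely, fix, conditionally on $\bar{\mathcal{F}}_\sigma$, a Poisson clock $N$ of intensity $n\tfrac\b{n-1}$, i.i.d.\ uniform vertices $\{v_i\}$, and i.i.d.\ signs $\{\xi_i\}$ with $\PP(\xi_i=+1)=\nu$, all independent of $\bar{\mathcal{F}}_\sigma$; let $Y$ be the simple exploration of Definition \ref{def:simpleexplorationprocess} started from $(0,X_\sigma)$ built from these objects, and let $\tilde X$ be built from the same objects via the construction of Proposition \ref{prop:expl}, restarted from state $X_\sigma$ with the pre-existing history $H^X_\sigma$ (which is $\bar{\mathcal{F}}_\sigma$-measurable). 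Parts (1) and (2) of the lemma are then immediate: $Y$ is a simple exploration by construction, and, by the same memorylessness of the Poisson links process that underlies Proposition \ref{prop:expl}, the portion of $\om$ not yet revealed by time $\sigma$ is still a fresh links process, so $\tilde X$ has the conditional law of $X_{\cdot+\sigma}$. Everything therefore rests on part (3).

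Next I would isolate the event on which the coupling can fail. Call the $k$-th ring of $N$, occurring at time $t$, \emph{bad} if $v_k\neq v_{t-}$ and $v_k$ is a vertex already visited by $\tilde X$ before time $t$. The key claim is that before the first bad ring (and before $\tau^Y$) one has $\tilde X_t=Y_t'$. The reason is that before a bad ring $\tilde X$ never cancels a proposed jump and never ``jumps to the history'': at a ring with $v_k=v_{t-}$ both processes do nothing; at a ring with $v_k\neq v_{t-}$ and $v_k$ fresh for $\tilde X$ we have $(v_k,\varphi_{t-})\notin H^X_{t-}$, so $\tilde X$ jumps to $(v_k,\varphi_{t-},\xi_kd_{t-})$ onto the fresh circle $\{v_k\}\times S^1$, exactly as $Y$ jumps onto the fresh circle $\{(N_t,v_k)\}\times S^1$ with the same $\varphi$ and direction; and between rings both processes move on corresponding circles in the same direction, hence meet the same link-endpoints and backtrack at the same instants. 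In other words, up to the purely cosmetic $\NN$-label, $\tilde X$ is literally performing the simple exploration dynamics. Conversely, a bad ring forces a discrepancy: if $(v_k,\varphi_{t-})\in H^X_{t-}$ then $\tilde X$ cancels the jump while $Y$ makes it, and otherwise $\tilde X$ jumps onto an already-visited circle while $Y$ jumps onto a fresh one. Care is needed to check that the backtracking of $\tilde X$, whose rule has two cases (landing on either endpoint of a discovered link), matches the single-case rule of $Y$; I would handle this by carrying along an explicit isomorphism between the part of $\NN\times V_n\times S^1$ explored by $Y$ and the part of $V_n\times S^1$ explored by $\tilde X$. Finally, under the coupling $\tilde X$ cannot have returned to its starting state before $\tau^Y$, so $[0,\tau^Y\wedge T)$ is exactly the right window.

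It remains to bound the probability of a bad ring before $\tau^Y\wedge T$. Condition on $\bar{\mathcal{F}}_\sigma$, so $\cJ^X_\sigma$ is a constant and $N,\{v_i\},\{\xi_i\}$ are independent of it. Before the $k$-th ring, $\tilde X$ has visited at most the $1+\cJ^X_\sigma$ vertices present in $H^X_\sigma$ together with at most $k-1$ further vertices (a new vertex can be reached only at a ring, and there have been $k-1$ of them), i.e.\ at most $\cJ^X_\sigma+k$ vertices in all; since $v_k$ is uniform on $V_n$ and independent of everything revealed strictly before that ring, the $k$-th ring is bad with conditional probability at most $(\cJ^X_\sigma+k)/n$. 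Every ring occurring before $\tau^Y\wedge T\le T$ is among the first $N_T$ rings, so a union bound gives
\[
\PP\big(\exists\text{ bad ring before }\tau^Y\wedge T\mid\bar{\mathcal{F}}_\sigma\big)
\le \frac1n\,\EE\Big[\textstyle\sum_{k=1}^{N_T}(\cJ^X_\sigma+k)\,\Big|\,\bar{\mathcal{F}}_\sigma\Big]
= \frac1n\Big(\EE[N_T]\,\cJ^X_\sigma+\tfrac12\EE\big[N_T(N_T+1)\big]\Big).
\]
Since $\EE[N_T]=n\tfrac\b{n-1}T\le 2\b T$ (using $n>2$) and $\EE[N_T^2]=\EE[N_T]^2+\EE[N_T]$, the right-hand side is at most $\tfrac{2\b T}{n}(\cJ^X_\sigma+\b T+1)$; and because the hypothesis that $X$ jumps to a previously unvisited vertex at time $\sigma$ gives $\cJ^X_\sigma\ge1$, this is at most $\tfrac{4\b T}{n}(\cJ^X_\sigma+\b T)$, as required in (3).

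The step I expect to be the main obstacle is not the probability estimate — routine once the bad event is correctly identified — but the structural verification in the second paragraph: showing rigorously that, up to the $\NN$-labelling, $\tilde X$ and $Y'$ undergo identical dynamics before the first bad ring, and in particular that their backtracking steps ($\tilde X$'s two-case rule versus $Y$'s one-case rule) stay in lock-step. The remaining ingredients — the construction of $\tilde X$ from $(N,\{v_i\},\{\xi_i\})$ and the identification of its conditional law — are direct consequences of Proposition \ref{prop:expl} and the memorylessness of the Poisson links process.
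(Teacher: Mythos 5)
Your proposal is correct and takes essentially the same approach as the paper: construct $\tilde X$ and $Y$ from a common Poisson clock, uniform vertices, and marks; observe that the processes agree until the first ring that proposes a vertex already in $V^{\tilde X}_t \cup V^X_\sigma$; and bound the probability of such a ring before $T$ by $\sum_{k\le N_T}(\cJ^X_\sigma+k)/n$, which after the Poisson moment calculation and the observation $\cJ^X_\sigma\ge 1$ gives exactly the stated bound. The paper defines the failure time $\rho$ directly and asserts without comment that $\tilde X_t=Y'_t$ for $t<\rho\wedge\tau^Y$; your explicit flagging of the backtracking subtlety (that $\tilde X$, while avoiding its history, can only ever perform destination-to-source backtracks, so the two-case rule collapses to $Y$'s one-case rule via an isomorphism between the explored trees) is a genuine gap in the paper's exposition that your write-up correctly identifies and sketches how to fill.
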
 

If at some time $t\leq\tau^{Y}$ we
have $\tilde{X}_{t}\neq Y_{t}'$ then we consider the
coupling as \emph{failed} at time $t$. 
The history $H^{\tilde X}_t$ of $\tilde X$ is defined 
as in \eqref{eq:Xhistory}. 

\begin{proof} 
In the following we write simply $\PP(\cdot)$ for 
$\PP(\cdot\mid \bar{\mathcal{F}}_{\sigma})$.
We will construct 
$\big\{\tilde{X}_{t}\big\} _{t\geq0}$ using the same sources of
randomness as for $Y$,
namely the same $N$, $\{ v_{i}\} _{i\in\mathbb{N}}$ and 
$\{\xi_{i}\} _{i\in\mathbb{N}}$ as given
in Definition \ref{def:simpleexplorationprocess}.
We write $\tilde X_t=(\tilde v_t,\tilde \varphi_t,\tilde d_t)$.

\begin{enumerate}[leftmargin=*]
\item The process $\tilde{X}$ starts at $\tilde X_0:=X_{\sigma}$ 
and initially only $\tilde \varphi_t$ changes,
by $\tilde \varphi_t=\tilde\varphi_0+t\tilde d_0$.
\item Whenever $N$ rings, say at time $t$, we inspect vertex $w=v_{N_{t}}$;
we have two cases:
\begin{enumerate}
\item If $(w,\tilde\ph_{t-})\in H^{\tilde X}_{t-}\cup H^X_\s$ 
or $w=\tilde{v}_{t-}$ then nothing happens and the process
continues on.
\item Otherwise the process jumps to 
$(w,\tilde{\varphi}_{t-},\xi_{N_{t}}\tilde{d}_{t-})$.
\end{enumerate}
\item Between successive rings of $N$ the process may backtrack as
before, using links of both $X$ and $\tilde X$.
\end{enumerate} 
It follows from Proposition \ref{prop:expl}
that $\tilde X$ and $X$ have the same distribution, 
giving statements (1) and (2) from the lemma.

For the proof of (3), let
\[\begin{split}
V^X_{\s}&:=\{v\in V_n : (v,\ph)\in H^X_s\mbox{ for some } s<\s, \ph\in
S^1\},\\
V^{\tilde X}_{t}&:=\{v\in V_n : (v,\ph)\in H^{\tilde X}_s\mbox{ for some } s<t, \ph\in
S^1\}
\end{split}\]
be the sets of vertices visited by $X$ up to time 
$\sigma$ and by $\tilde{X}$ up to time $t$, respectively.  
We define 
\[
\rho:=\inf\big\{ t\geq0: N \mbox{ rings at time }t\mbox{ and }
v_{N_t}\in   V_{t}^{\tilde{X}}\cup V_{\s}^{X}\big\}.
\]
Until time $\rho\wedge\tau^{Y}$ the processes $\tilde{X}_{t}$ and
$Y_{t}'$ are equal, thus
\[
\PP\big(\forall_{t<\tau^{Y}\wedge T}\tilde{X}_{t}=Y_{t}'\big)
\geq\PP(\rho\geq T).
\]
Each time $t$ when $N$ rings there is a  chance 
that $v_{N_t}\in   V_{t}^{\tilde{X}}\cup V_{\s}^{X}$.
This has probability at most the number of previously visited
vertices divided by $n$.
Since the number of visited vertices cannot
exceed the number of discovered links by more than 1, we get
\[\begin{split} 
\PP(\rho< T)
&\leq \E\Bigg[\sum_{i=1}^{N_{T}}
\frac{(\mathcal{J}_{\sigma}^{X}+i)}{n}\Bigg]
=\frac{\mathcal{J}_{\sigma}^{X}}{n}\E[N_T]
+\frac{1}{2n}\E\big[N_T(N_T+1)\big]
\\
&=\frac{\mathcal{J}_{\sigma}^{X}\beta T}{n-1}+\frac{\beta T}{n-1}+
(\beta T)^2\frac{n}{2(n-1)^2},
\end{split}
\]
where in the last equality we used that $N_T$ has Poisson 
distribution with mean $n\tfrac\b{n-1} T$.
Since $\cJ^X_\s\geq1$ this gives the claimed bound on 
$\PP\big(\forall_{t\leq\tau^{Y}\wedge T}\tilde{X}_{t}=Y_{t}'\big)$.
\end{proof}

\subsection{Properties of the exploration processes}

Next we present some basic properties of
the processes $X$ and $Y$, starting with the simple exploration $Y$. 

First note that $\cJ^Y_t$, the number of links discovered by time $t$,
is a Poisson process with rate $\b$, stopped at time $\tau^Y$
(at which time $Y$ itself terminates).  It will be convenient to
extend this process beyond time $\tau^Y$.  For this purpose we let
$N'_t$ denote a Poisson process of rate $\b$ which agrees with
$\cJ^Y_t$ up to time $\tau^Y$.

Many relevant properties of $Y$
can be understood in terms of the process $Z$
given by $Z_{t}\eqdef N'_{t}-t$.
For example, if $Z_t$ hits $-1$ then this
corresponds to $Y$ returning to its starting point, that is to say we
have that 
$\tau^Y=\inf\{t\geq0:Z_t=-1\}$.
To see this, note that  $N'_t+1$ counts the number 
of copies of $S^1$ that $Y$ has visited by time $t$. 
If $Z_t=-1$
then $N'_t+1=t$, which means that the total time 
spent equals the number of
$S^1$'s visited.  Hence at this time $Y$ has 
explored the entirety of each 
copy of $S^1$ it has visited, meaning that it must have
returned to its starting point.   See Figure \ref{fig:Z-graph}
 \begin{figure}[h!]
\includegraphics{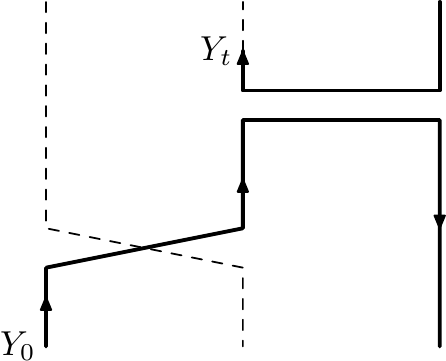} 
\qquad\qquad
\includegraphics{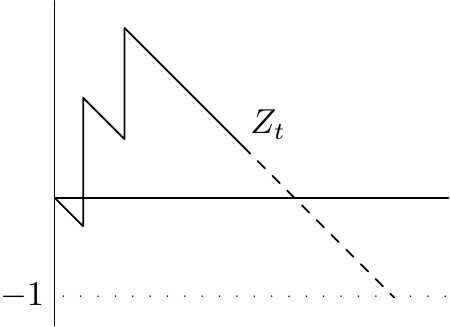} 
\centering
\caption{Left:  in bold, simple exploration $Y$ up to time $t$;
dashed, the future trajectory of $Y$ assuming no more links are
discovered.  Right:  corresponding plot of $Z$, dashed line giving
extrapolation until time $\tau^Y$ assuming no more links are
discovered. 
}
\label{fig:Z-graph}
\end{figure}

Note that $\b>1$ implies that $Z_t\to+\oo$ almost surely.  We define a
sequence of random times which we call \emph{frontier times} $\ell_k$,
as well as processes $Z^{(k)}=(Z_{\ell_k+t}-Z_{\ell_k})_{t\geq0}$, as
follows.  First, we let $\ell_0:=0$ and $Z^{(0)}:=Z$.  Next, we let
$\ell_1$ be the time when $Z_{t-}$ attains its \emph{global} minimum
(note that as $Z_{t} \to +\oo$ almost surely, this time is almost
surely finite).  This is necessarily a jump time of $Z$ (equivalently,
of $N'$) but it is not a stopping time.  Inductively, $\ell_{k+1}$ is
the time when $Z^{(k)}_{t-}$ attains its global minimum.  We also
write $\D_k=\ell_{k+1}-\ell_k$ for the time spent between successive
frontier times. See Figure \ref{drift-fig} for a sample trajectory of the process $Z_t$ with frontier times marked.

\begin{figure}[hbt]
\centering
\includegraphics[scale=.7]{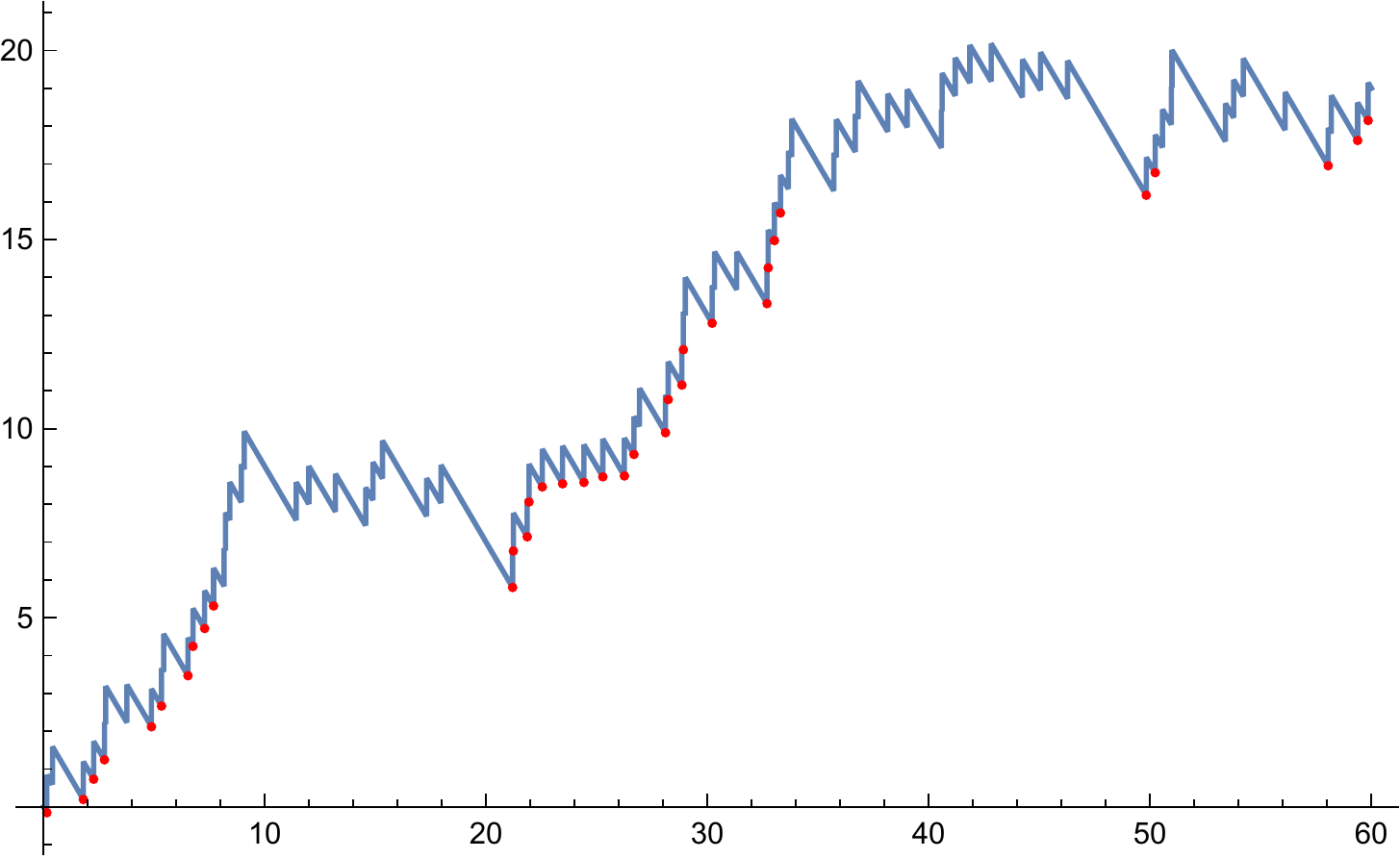}
\caption{A simulation of the process $Z_t$ with $\b=1.3$.  The frontier times are
  marked red.}
\label{drift-fig}
\end{figure}

In terms of the simple exploration
$Y$, the frontier times $\ell_k$ play the following role.
Recall that the jump times of $Z$ are exactly 
the times when $Y$ discovers a new link.  The frontier times are the
times when $Y$ discovers a new link 
\emph{which is never backtracked}.

\begin{proposition}[Survival and increments of the simple exploration]
\label{fact:loop_closure}
Let  $\beta>1$ and write $\cS=\{\tau^Y=\oo\}$.
\begin{enumerate}[leftmargin=*]
\item We have
that $\PP(\cS)=z$,
where $z$ is the unique positive solution of $1-z=e^{-\beta z}$. 
\item 
There exists $C,c>0$ such that
\begin{equation}\label{eq:tauYtails}
\PP(\cS^c\cap \{\tau^{Y}\geq t\})\leq Ce^{-ct}.
\end{equation}
\item Conditionally on $\cS$,
the sequence 
$\big\{\big(\D_{k},(Z^{(k)}_t)_{0\leq t<\D_k}\big)\big\}_{k=0}^{+\infty}$
is i.i.d. 
\item There exists $C,c>0$ such that for any $k \geq 0$
\be \label{eq:Dtails}
\PP(\Delta_{k}\geq t\mid \cS)\leq Ce^{-ct},\quad t\geq0.
\ee
\end{enumerate}
\end{proposition}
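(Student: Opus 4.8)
\emph{Proof strategy.} The idea is to read all four statements off the process $Z_t=N'_t-t$, a spectrally positive L\'evy process with unit downward drift and unit upward jumps at rate $\b$; since $\b>1$ it has positive mean, $Z_t\to+\oo$ a.s., and $\tau^Y=\inf\{t:Z_t=-1\}$, $\cS=\{\tau^Y=\oo\}$. The one algebraic ingredient is the Laplace exponent $\phi(\theta)\eqdef\b(e^\theta-1)-\theta$, strictly convex with $\phi(0)=0$, $\phi'(0)=\b-1>0$ and $\phi(\theta)\to+\oo$ as $\theta\to-\oo$; so it has a unique root $\theta^*<0$, lying to the left of its minimiser $-\log\b$, and $M_t\eqdef e^{\theta^*Z_t}=e^{\theta^*Z_t-\phi(\theta^*)t}$ is a martingale, bounded by $e^{-\theta^*}$ on $\{t<\tau^Y\}$. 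For \emph{(1)} I would apply optional stopping to $M$ at $\tau^Y\wedge t$ and let $t\to\oo$: on $\cS$, $M_t\to0$ (as $\theta^*<0$, $Z_t\to\oo$), while on $\cS^c$, $M_{\tau^Y}=e^{-\theta^*}$, so bounded convergence gives $1=e^{-\theta^*}\PP(\cS^c)$, i.e.\ $\PP(\cS)=1-e^{\theta^*}$; putting $z\eqdef1-e^{\theta^*}$ into $\phi(\theta^*)=0$ gives $1-z=e^{-\b z}$, and uniqueness of the positive solution follows from concavity of $z\mapsto1-z-e^{-\b z}$. The same calculation run from a level $x\ge-1$ shows the chance of ever reaching $-1$ from $x$ is $e^{\theta^*(x+1)}$, equivalently $\PP(\inf_{s\ge0}Z_s<-a)=e^{\theta^*a}$ for $a>0$; this will be used below.

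For \emph{(2)} and \emph{(4)} (the two exponential tails) I would proceed as follows. Let $\widetilde\PP$ be the Esscher change of measure with $\mathrm{d}\widetilde\PP/\mathrm{d}\PP|_{\cF_t}=M_t$; under $\widetilde\PP$, $Z$ is spectrally positive with Laplace exponent $\widetilde\phi(\cdot)=\phi(\cdot+\theta^*)$ and negative drift $\widetilde\phi'(0)=\phi'(\theta^*)<0$ (as $\theta^*<-\log\b$), so $\widetilde\PP(\tau^Y<\oo)=1$. Optional stopping of $M$ at $\tau^Y$ on $\cS^c$ (where $Z_{\tau^Y}=-1$) gives $\PP(A)=e^{\theta^*}\widetilde\PP(A)$ for $A\in\cF_{\tau^Y}$ with $A\se\cS^c$; with $A=\{t\le\tau^Y<\oo\}$ this yields $\PP(\cS^c\cap\{\tau^Y\ge t\})=e^{\theta^*}\widetilde\PP(\tau^Y\ge t)\le e^{\theta^*}\widetilde\PP(Z_t\ge-1)$, and $\widetilde\PP(Z_t\ge-1)\le e^{\mu}e^{\widetilde\phi(\mu)t}$ with $\widetilde\phi(\mu)<0$ for small $\mu>0$ gives \eqref{eq:tauYtails}. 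For \eqref{eq:Dtails}, by the i.i.d.\ structure of \emph{(3)} it is enough to bound $\PP(\D_0\ge t\mid\cS)=\PP(\ell_1\ge t\mid\cS)\le\PP(\ell_1\ge t)/z$; on $\{\ell_1>t\}$ the overall minimum lies after $t$, which by the Markov property at $t$ (with $(Z_{t+s}-Z_t)_{s\ge0}$ a fresh copy of $Z$) occurs exactly when its infimum is $<\underline Z_t-Z_t$, where $\underline Z_t\eqdef\inf_{s\le t}Z_s\le0$, so by the first paragraph
\[
\PP(\ell_1>t)=\EE[e^{\theta^*(Z_t-\underline Z_t)}]\le e^{\theta^*ct}+\PP(Z_t-\underline Z_t<ct)\le e^{\theta^*ct}+\PP(Z_t<ct)
\]
for any $c\in(0,\b-1)$ (using $\underline Z_t\le0$), and $\PP(Z_t<ct)=\PP(N'_t<(1+c)t)$ decays exponentially by a Chernoff bound since $N'_t\sim\mathrm{Poisson}(\b t)$ with $\b>1+c$; dividing by $z=\PP(\cS)$ finishes.

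The heart of the matter is \emph{(3)}. Recall $\ell_1$ is the a.s.\ unique time at which $Z$ attains its overall minimum; as $\ell_1$ is a jump time, $Z_{\ell_1}=\inf_sZ_s+1$ and $Z_s>\inf_sZ_s$ strictly for $s>\ell_1$, so $Z^{(1)}_t\eqdef Z_{\ell_1+t}-Z_{\ell_1}$ has $Z^{(1)}_0=0$ and $\inf_tZ^{(1)}_t\in(-1,0)$ \emph{deterministically} (a fresh exploration started at the link found at $\ell_1$ never closes up). I would prove the \emph{regeneration property}: $Z^{(1)}$ is independent of the pre-$\ell_1$ path $(Z_s)_{0\le s<\ell_1}$ and has the law of $Z$ conditioned on $\cS$. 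This is the classical decomposition of a spectrally positive L\'evy process drifting to $+\oo$ at the time of its overall minimum (via the Wiener--Hopf factorisation), and can alternatively be seen as the spine decomposition of the supercritical $\mathrm{Poisson}(\b)$ Galton--Watson tree traced out by the exploration, conditioned to survive, the frontier times being the crossings of the edges on its leftmost infinite ray. Granting it: $\cS=\{\inf_sZ_s>-1\}$ is a functional of the $k=0$ piece $(\D_0,(Z^{(0)}_t)_{0\le t<\D_0})=((Z_t)_{0\le t<\ell_1})$ only, so conditioning on $\cS$ affects only that piece, replacing its law by that of the same functional of $Z$-conditioned-on-$\cS$, which by regeneration equals the unconditional law of $(\D_1,(Z^{(1)}_t)_{0\le t<\D_1})$; iterating the regeneration property along $\ell_1,\ell_2,\dots$ (using that $(\D_k,(Z^{(k)}_t)_{0\le t<\D_k})$ is the same functional of $Z^{(k)}$ as the $k=0$ piece is of $Z$) then yields that, conditionally on $\cS$, the whole sequence is i.i.d. The routine parts are (1), (2), (4), which are just optional stopping, an exponential tilt, and Poisson large deviations once $\theta^*$ is identified; the genuine obstacle will be pinning down the post-minimum law and, above all, justifying the regeneration at the \emph{non}-stopping times $\ell_k$, which is where all the content sits.
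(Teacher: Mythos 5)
Your approach to parts (1), (2) and (4) is correct but takes a genuinely different route from the paper's. You exploit the L\'evy structure of $Z$ via the Lundberg exponent $\theta^*<0$ (the nontrivial root of $\phi(\theta)=\beta(e^\theta-1)-\theta$), the exponential martingale $M_t=e^{\theta^*Z_t}$ with optional stopping at $\tau^Y$ for (1), the Esscher tilt for (2), and the identity $\PP(\ell_1>t)=\EE[e^{\theta^*(Z_t-\underline{Z}_t)}]$ for (4). The paper is deliberately more elementary: for (1) it conditions on first passage of $Z$ to $-1$ and uses the Markov property at time $1$ to get the recursion $\delta=\sum_j e^{-\beta}\tfrac{\beta^j}{j!}\delta^j=e^{-\beta(1-\delta)}$; for (2) and (4) it reduces everything to the crude bound $\PP(\ell_1\geq t)\leq\sum_{k\geq\lfloor t\rfloor}\PP(N'_k<k+1)$ and a Poisson Chernoff estimate. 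Your route is slicker and would generalise to other spectrally positive L\'evy processes; the paper's avoids tilting entirely, which suits a self-contained appendix. Both are correct.

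For part (3) you correctly identify the crux---the regeneration of $Z$ at the non-stopping time $\ell_1$---and you explicitly flag it as \emph{where all the content sits}, proposing to invoke it from Wiener--Hopf / post-minimum theory rather than proving it. The paper makes this rigorous by writing $\ell_1=\max\{m_k:m_k<\infty\}$ where the record-minimum times $m_k$ \emph{are} stopping times with $\PP(m_k<\infty)=(1-z)^{k-1}$, then summing over which ladder epoch is the last. For any event $\cB$,
\[
\PP(Z^{(1)}\in\cB)=\sum_{k\geq1}\PP\big(m_k<\infty,\ m_{k+1}=\infty,\ (Z_{m_k+t}-Z_{m_k})_{t\geq0}\in\cB\big),
\]
and since $\{m_{k+1}=\infty\}$ given $m_k<\infty$ is exactly the event that the post-$m_k$ increment lies in $\cS$, each summand factorises by the strong Markov property at $m_k$ as $(1-z)^{k-1}\PP(Z\in\cB\cap\cS)$, yielding $\PP(Z^{(1)}\in\cB)=\PP(Z\in\cB\mid\cS)$; analogous sums with pre-$m_k$ and post-$m_k$ events inserted give $(Z^{(1)}\mid\cS)\eqdist(Z\mid\cS)$ and full independence of the pieces. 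Decomposing the non-stopping time $\ell_1$ over the genuine stopping times $m_k$ is precisely the computation you leave as the ``genuine obstacle,'' and it is worth knowing as the elementary, self-contained substitute for quoting the general Wiener--Hopf decomposition.
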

The proof is based on well-known properties of Poisson processes, for
completeness we provide details 
in Appendix \ref{sep-app}.
The first two parts of the proposition tell us
that the simple exploration either continues indefinitely,
or it closes `quickly'. 
 Intuitively, the former scenario
parallels the situation when the (true) exploration process $X$
explores a large cycle.
The other two parts tell us that, conditionally on $Y$ `surviving',  
the frontier times $\ell_k$ are
renewal times, and the renewal intervals $\D_k$ are typically 
short.

We now turn to discussing some properties of the 
exploration process $X$.  
Recall that
$\mathcal{J}^X_t := \#\{s\le t\colon \textrm{$X$ discovers a new link at time
$s$}\}$.  
Let 
\[
\mathcal{N}^X_{t}:= 
\#\{v\in V_n: (v,\ph)\in H^X_t\mbox{ for some } \ph\in
S^1\}
\]
denote the total number of vertices 
visited by $X$ up to time $t$, and let
$\mathcal{A}^{\his}_{t} := \{ \cN^X_t \leq \tfrac{n}2 \}$
denote the event that no more than $n/2$ vertices have been
visited up to time $t$.
Note that $\cN^X_t\leq \cJ^X_t+1$ and that $\cJ^X_t\leq N_t$
where $N$ is the Poisson process of rate $n\tfrac\b{n-1}$
in Proposition \ref{prop:expl}.  From this and a simple argument using
Laplace transform we see that
\be\label{Ahis-bound}
\PP((\cA^\his_t)^c)=
\PP(\cN^X_t>\tfrac{n}2)\leq 
\exp\big(-\tfrac n3(\log (\tfrac{n-1}{3t\b})-1)\big).
\ee
In particular, if $t=o(n)$ then 
$\PP(\cA^\his_t)\geq 1-e^{-c n}$ for some $c>0$.

By $A^X_{t}$ we will denote the set of vertices available to the
exploration $X$ at time $t$ by means of a new jump, 
i.e.\ $A^X_t = \emptyset$ if  $t\geq \tau^X$, otherwise if 
$X_t=(v,\varphi,d)$ then
\[
A^X_t := \{w \in V_n\setminus \{v\} \colon 
(w,\varphi) \notin H^{X}_t \}.
\]
Recall that a 
\emph{counting process} is a nondecreasing, integer valued
c\`adl\`ag stochastic process starting at zero and with jumps equal to
one.  Let $J$ be an $\mathcal{F}_t$-adapted counting process. 
We will
say that a nonnegative process $\lambda$ is an 
\emph{intensity} of $J$ if
$\lambda$ is $\mathcal{F}_t$-progressively measurable, 
$\int_0^t\lambda_u du < \infty$ a.s.\ for all $t$,
and the process $J_t - \int_0^t\lambda_s \, ds$ 
is an $\mathcal{F}_t$-martingale. 

\begin{lemma}[Intensity of jumps]\label{le:intensity}
The processes $\mathcal{J}^X$ and 
$\mathcal{N}^X$ are counting processes with intensities
$\lambda$, $\mu$ given respectively by
\begin{displaymath}
	\lambda_t = \frac{\beta}{n-1} |A^X_t|
\qquad \mbox{and}\qquad
	\mu_t = \frac{\beta}{n-1} (n-\cN^X_t).
\end{displaymath}
In particular, on the event $\mathcal{A}^{\his}_{t}$ we have
$\mu_t \geq \tfrac{\beta}{2}$.
\end{lemma}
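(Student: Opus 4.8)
The plan is to verify the three defining properties of an intensity (as just recalled before the statement) for each of the two counting processes, and then read off the final bound. Both $\cJ^X$ and $\cN^X$ are manifestly nondecreasing, integer-valued, c\`adl\`ag, start at zero and increase by jumps of size one (two distinct links are never discovered simultaneously, and no new vertex is visited except at a discovery time), so they are indeed counting processes. The remaining work is to identify the compensators.

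First I would work with the construction from Proposition \ref{prop:expl}. The process $\cJ^X$ increases by one precisely at those ring times $t$ of the rate-$n\tfrac\b{n-1}$ Poisson process $N$ at which the proposed vertex $w=v_{N_t}$ satisfies $(w,\ph_{t-})\notin H^X_{t-}$ and $w\neq v_{t-}$; since $v_{N_t}$ is uniform on $V_n$ and independent of everything revealed up to $t-$, the conditional probability of a jump, given $\cF_{t-}$ and given that $N$ rings at $t$, is $|A^X_t|/n$. Standard thinning/marked-Poisson reasoning then shows that $\cJ^X_t-\int_0^t \tfrac{\b}{n-1}|A^X_s|\,ds$ is an $\cF_t$-martingale: indeed for $s>t$, $\EE[\cJ^X_s-\cJ^X_t\mid\cF_t]=\EE\big[\int_t^s n\tfrac\b{n-1}\cdot\tfrac{|A^X_u|}{n}\,du\,\big|\,\cF_t\big]$ because the increments of $N$ on $(t,s]$ are independent of $\cF_t$ and the acceptance probability at each ring is $|A^X_u|/n$, which is $\cF_u$-measurable and bounded by $1$, so the integrand is integrable. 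Progressive measurability of $\la_t=\tfrac\b{n-1}|A^X_t|$ is clear since $A^X_t$ is determined by $(X_s)_{s\le t}$, and $\int_0^t\la_u\,du\le \tfrac{\b}{n-1}\,n\,t<\oo$. This gives the claimed intensity for $\cJ^X$.

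Next, for $\cN^X$: a new vertex is added exactly when $N$ rings at a time $t$ with $w=v_{N_t}\notin V^X_{t-}$ (the set of vertices already visited, which has size $\cN^X_{t-}$), regardless of the $\ph$-coordinate; here one must note that $w=v_{t-}$ forces $w\in V^X_{t-}$, and that $(w,\ph_{t-})\in H^X_{t-}$ with $w$ a \emph{new} vertex cannot occur, so the conditioning on $H^X_{t-}$ in Proposition \ref{prop:expl} is irrelevant for counting fresh vertices. Hence the conditional probability of incrementing $\cN^X$ at a ring time, given $\cF_{t-}$, is $(n-\cN^X_{t-})/n$, and the same thinning argument yields that $\cN^X_t-\int_0^t\tfrac{\b}{n-1}(n-\cN^X_s)\,ds$ is an $\cF_t$-martingale, with $\mu_t=\tfrac{\b}{n-1}(n-\cN^X_t)$ progressively measurable and $\int_0^t\mu_u\,du<\oo$. (One should also remark that on $\{t\ge\tau^X\}$ both processes are constant, consistent with $A^X_t=\es$ there; strictly $\cN^X$ could still be constant before $\tau^X$ too, but the martingale identity is unaffected.) Finally, on the event $\cA^\his_t=\{\cN^X_t\le n/2\}$ we have $n-\cN^X_t\ge n/2$, hence $\mu_t=\tfrac{\b}{n-1}(n-\cN^X_t)\ge \tfrac{\b}{n-1}\cdot\tfrac n2\ge\tfrac\b2$, giving the last assertion.

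I expect the only genuinely delicate point to be the rigorous justification that the compensator is obtained by thinning the Poisson process $N$ with the $\cF_u$-predictable acceptance probabilities $|A^X_u|/n$ and $(n-\cN^X_u)/n$, i.e.\ that the backtracking steps occurring between rings of $N$ (which change $A^X$ and can change $v_t$ but never change $\cN^X$ or $\cJ^X$) do not disturb the martingale property. This is handled by conditioning on $\cF_{t-}$ at each ring time and using that, by the construction in Proposition \ref{prop:expl}, the uniform vertices $v_i$ and signs $\xi_i$ consumed at future rings are independent of $\cF_{t-}$; everything else is a routine application of the optional-sampling/compensation formula for Poisson processes, so no lengthy computation is needed beyond what is indicated above.
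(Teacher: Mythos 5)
The paper does not actually prove this lemma; it refers the reader to \cite[Lemma~3.7]{A-K-M}, so there is no internal proof to compare against. Your thinning argument is the standard way to establish such a statement and is correct in its essentials: conditioning on $\cF_{t-}$ at each ring of the rate-$n\tfrac\b{n-1}$ Poisson process $N$, the acceptance probability is $|A^X_{t-}|/n$ for a link discovery and $(n-\cN^X_{t-})/n$ for a fresh-vertex visit, both $\cF_{t-}$-measurable, and these differ from $|A^X_t|/n$, $(n-\cN^X_t)/n$ only on a Lebesgue-null set of times, so the claimed compensators follow. The observation that for $\cN^X$ the phase constraint in Proposition~\ref{prop:expl} is automatically satisfied at a fresh vertex is exactly the point needed to get the simpler intensity $\mu$.

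One small caveat: your parenthetical about $t\ge\tau^X$ is only half right. The convention $A^X_t=\es$ makes $\lambda_t=0$ there, consistent with $\cJ^X$ being frozen, but $\mu_t=\tfrac{\b}{n-1}(n-\cN^X_t)$ does \emph{not} vanish for $t\ge\tau^X$ (unless the loop visits every vertex), while $\cN^X$ is declared constant; so taken literally, $\cN^X_t-\int_0^t\mu_s\,ds$ is strictly decreasing after $\tau^X$ and hence not a martingale on all of $[0,\oo)$. The lemma should really be read as asserting the intensity of the stopped process $\cN^X_{t\wedge\tau^X}$ (i.e.\ the intensity is $\mu_t\one_{\{t<\tau^X\}}$), which is what is used in the paper (e.g.\ Corollary~\ref{cor:visits-intensity} works on $\{\tau^X\ge\sigma+t\}$). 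This is an imprecision in the statement rather than a flaw in your method, but your remark as written suggests the conflict is not there, so it is worth correcting.
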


A proof of this rather intuitive statement may be
found (in a more general setting) in \cite[Lemma 3.7]{A-K-M}.
The following lemma also appears in a more general form in
\cite[Lemma A.2]{A-K-M}, we include its proof here for the sake of
completeness. 

\begin{lemma}\label{le:intensity-visits}
Suppose $M$ is a counting process with intensity $\lambda$ and let
$\Lambda_{t} = \int_{0}^{t} \lambda_s \, ds$. Let $\sigma$, $\tau$ be
stopping times such that $\sigma \leq \tau$. Let $\ell > 0$. Then we
have
\[ \PP \left( \{ M_{\tau} - M_{\sigma} \leq \ell / 2 \} \cap \{
\Lambda_{\tau} - \Lambda_{\sigma} \geq \ell \} |
\mathcal{F}_{\sigma}\right) \leq e^{- \ell / 8}.
\]
\end{lemma}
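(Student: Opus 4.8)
The plan is to run a Chernoff-type estimate based on the exponential supermartingale attached to the counting process $M$ and then optimise over a free parameter. To that end I would first record the classical fact that, for every $\theta>0$, the process
\[
L_t:=\exp\bigl(-\theta M_t+(1-e^{-\theta})\Lambda_t\bigr)
\]
is a nonnegative $\mathcal F_t$-supermartingale. Indeed, by the definition of intensity $M_t-\Lambda_t$ is an $\mathcal F_t$-martingale, and a short computation using that $M$ has unit jumps and $\Lambda$ is continuous identifies $L$ with the Dol\'eans--Dade stochastic exponential $\mathcal E\bigl((e^{-\theta}-1)(M_\cdot-\Lambda_\cdot)\bigr)$; being a nonnegative local martingale it is a supermartingale. (Alternatively one invokes the standard exponential supermartingale for point processes directly.)

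Next I would apply the optional stopping theorem to $L$ at the stopping times $\sigma\le\tau$. Using the multiplicative identity $L_t/L_s=\exp\bigl(-\theta(M_t-M_s)+(1-e^{-\theta})(\Lambda_t-\Lambda_s)\bigr)$ this gives
\[
\EE\Bigl[\exp\bigl(-\theta(M_\tau-M_\sigma)+(1-e^{-\theta})(\Lambda_\tau-\Lambda_\sigma)\bigr)\,\Big|\,\mathcal F_\sigma\Bigr]\le 1.
\]
On the event $A:=\{M_\tau-M_\sigma\le\ell/2\}\cap\{\Lambda_\tau-\Lambda_\sigma\ge\ell\}$ the exponent is at least $(1-e^{-\theta})\ell-\tfrac{\theta\ell}{2}$, so the conditional Markov inequality yields
\[
\PP(A\mid\mathcal F_\sigma)\le\exp\bigl(\tfrac{\theta\ell}{2}-(1-e^{-\theta})\ell\bigr).
\]
Taking $\theta=1$ makes the right-hand side $\exp\bigl(-(\tfrac12-e^{-1})\ell\bigr)$, which is at most $e^{-\ell/8}$ because $\tfrac12-e^{-1}\approx0.132>\tfrac18$; optimising gives $\theta=\ln 2$ and the marginally better exponent $\tfrac12(1-\ln2)\ell$, but this is unnecessary.

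I expect the only delicate point to be the use of optional stopping when $\tau$ is unbounded, together with making sure $M_\tau$ and $\Lambda_\tau$ are well-defined on the event of interest. I would handle this by replacing $\tau$ with $\rho:=\tau\wedge\inf\{t\ge\sigma:\Lambda_t-\Lambda_\sigma\ge\ell\}$: since $\Lambda$ is a continuous integral and $M$ is nondecreasing, $A$ is contained in $\{M_\rho-M_\sigma\le\ell/2\}\cap\{\Lambda_\rho-\Lambda_\sigma\ge\ell\}$, so it suffices to bound the probability of that event, applying optional stopping at $\sigma\wedge t\le\rho\wedge t$ and letting $t\to\infty$ with the help of $L\ge 0$ and Fatou. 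This is routine. For orientation one may note that, by the time-change theorem, $M$ is a rate-one Poisson process run along the clock $\Lambda$, so the estimate is essentially the Poisson lower-tail bound $\PP(\mathrm{Poisson}(\ell)\le\ell/2)\le e^{-\ell/8}$.
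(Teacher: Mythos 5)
Your argument is correct, and it takes a genuinely different route from the paper's. The paper conditions on an arbitrary $A\in\mathcal F_\sigma$, shifts time to $\widetilde M_t=M_{\sigma+t}-M_\sigma$, and then invokes a time-change theorem (citing \cite{A-K-M}) to write $M$ as a rate-one Poisson process run on the clock $\Lambda$; this reduces the event to $\{N_\ell\le\ell/2\}$ for a Poisson$(\ell)$ variable, after which the authors apply exactly the same Chernoff computation (the quadratic bound $e^{-a}-1+a\le a^2/2$ giving $e^{-\ell/8}$ at $a=\tfrac12$). You instead work directly with the exponential supermartingale $L_t=\exp(-\theta M_t+(1-e^{-\theta})\Lambda_t)$, identify it as a Dol\'eans--Dade exponential of $(e^{-\theta}-1)(M-\Lambda)$, apply optional stopping conditionally on $\mathcal F_\sigma$, and optimise — arriving at the same Chernoff-type exponent $\theta\ell/2-(1-e^{-\theta})\ell$. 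Your route is more self-contained (no external time-change theorem needed) and handles the conditioning on $\mathcal F_\sigma$ in one stroke via $\EE[L_\tau/L_\sigma\mid\mathcal F_\sigma]\le1$, whereas the paper's route factors through the cleaner Poisson tail bound and reuses machinery it already imports from \cite{A-K-M}. Your localisation via $\rho=\tau\wedge\inf\{t\ge\sigma:\Lambda_t-\Lambda_\sigma\ge\ell\}$ together with Fatou correctly handles possibly infinite $\tau$; the paper's proof does not comment on this point, relying implicitly on the time-change statement. Both are fine; yours is a valid and somewhat more elementary alternative.
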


\begin{proof} Consider any $A \in \mathcal{F}_\sigma$ with positive
probability and the process $\widetilde{M}_t = M_{\sigma+t} -
M_\sigma$, which is a counting process with intensity
$\widetilde{\lambda}_{t} = \lambda_{\sigma+t}$ with respect to the
filtration $\widetilde{\mathcal{F}}_t = \mathcal{F}_{\sigma+t}$. Let
$\widetilde{\PP}(\cdot) = \PP(\cdot|A)$. We have
$\widetilde{\Lambda}_t = \int_0^t \widetilde{\lambda}_s ds =
\Lambda_{\sigma+t} - \Lambda_\sigma$. Let $N$ be a Poisson process
with intensity $1$ such that $N_t = M_{\Lambda_t}$ almost surely
(see \cite[Theorem A.1]{A-K-M} and references there). We
get
\begin{align*} \widetilde{\PP}(\left\{ M_{\tau} - M_{\sigma}\leq\ell /
2\right\} \cap\left\{ \Lambda_{\tau} -
\Lambda_{\sigma}\geq\ell\right\}) &=
\widetilde{\PP}(\{\widetilde{M}_{\tau-\sigma} \le \ell/2\}\cap
\{\widetilde{\Lambda}_{\tau-\sigma} \ge \ell\}) \\ &\le
\widetilde{\PP}(N_\ell \le \ell/2).
\end{align*} Using the form of the Laplace transform of $N_\ell$ and
Chebyshev's inequality we obtain
\begin{align*} \widetilde{\PP}(N_{\ell} \le
\ell/2)&\leq\inf_{a\geq0}\exp\left((e^{-a}-1)\ell+a\ell/2\right)\\
&\leq\inf_{a\geq0}\exp\left(\frac{1}{2}a^{2}\ell-
\frac{a\ell}{2}\right)=e^{-\ell/8},
\end{align*} where in the second step we have used the elementary
inequality $e^{-a}-1+a\leq\frac{1}{2}a^{2}$ valid for $a\geq0$.  Thus
we get
\begin{displaymath} \PP(\left\{ M_{\tau}-M_{\sigma}\leq\ell/2\right\}
\cap\left\{ \Lambda_{\tau} - \Lambda_{\sigma}\geq\ell\right\}|A) \le
e^{-\ell/8},
\end{displaymath} for arbitrary $A \in \mathcal{F}_\sigma$ of positive
probability, which implies the lemma.
\end{proof}

\begin{corollary}[Visits to previously unvisited vertices]
\label{cor:visits-intensity}
Let $\sigma$ be a stopping time 
with respect to the filtration of the exploration process $X$ 
and let $\eta^{X}(\sigma)$ be the
first time after $\sigma$ when $X$ makes a jump to a previously
unvisited vertex.
For any $t > 0$
on the event $\mathcal{A}^{\his}_{\sigma}$ we have
\[ 
\PP \left( \eta^{X}(\sigma) \wedge \tau^X - \sigma \geq t \mid
\mathcal{F}_{\sigma}\right) \leq e^{-t/16} 
\]
\end{corollary}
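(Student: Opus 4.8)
The plan is to reduce the statement to an application of Lemma~\ref{le:intensity-visits}. First I would introduce the counting process $M$ that counts jumps of $X$ to previously unvisited vertices after time $\sigma$; more precisely, set $M_t := \cN^X_{\sigma + t} - \cN^X_\sigma$, so that $M$ increments exactly when $X$ visits a fresh vertex, and $\eta^X(\sigma) = \sigma + \inf\{t > 0 : M_t \geq 1\}$. By Lemma~\ref{le:intensity} (applied to the shifted filtration $\mathcal{F}_{\sigma + \cdot}$), $M$ has intensity $\mu_{\sigma + t} = \tfrac{\beta}{n-1}(n - \cN^X_{\sigma+t})$. The key observation is that on $\mathcal{A}^{\his}_\sigma$, and for as long as $X$ has not yet hit the death time $\tau^X$ nor exceeded $n/2$ visited vertices, we have $\cN^X_{\sigma+t} \leq n/2$ and hence $\mu_{\sigma+t} \geq \beta/2 > 1/2$. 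In particular, stopping the integrated intensity at $\eta^X(\sigma) \wedge \tau^X$, on $\mathcal{A}^{\his}_\sigma$ we get, for the stopping time $\tau_* := (\sigma + t) \wedge \eta^X(\sigma) \wedge \tau^X$,
\[
\Lambda_{\tau_*} - \Lambda_\sigma = \int_\sigma^{\tau_*} \mu_u \, du \geq \tfrac{1}{2}(\tau_* - \sigma) \quad \text{on } \mathcal{A}^{\his}_\sigma.
\]
(Here I am using that $\mathcal{A}^{\his}_{\sigma+s} \subseteq \mathcal{A}^{\his}_\sigma$ fails in general, but monotonicity goes the other way: before $\eta^X(\sigma)$ the number of visited vertices is unchanged, $\cN^X_{\sigma+s} = \cN^X_\sigma \leq n/2$, so the bound $\mu \geq \beta/2$ is valid on all of $[\sigma, \eta^X(\sigma))$ once $\mathcal{A}^{\his}_\sigma$ holds — this is the crucial point that makes the argument work.)

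Next I would apply Lemma~\ref{le:intensity-visits} with this $M$, the stopping times $\sigma$ and $\tau := (\sigma + t) \wedge \eta^X(\sigma) \wedge \tau^X$, and $\ell := t/2$. On the event $\{\eta^X(\sigma) \wedge \tau^X - \sigma \geq t\}$ we have $\tau = \sigma + t$ and $M_\tau - M_\sigma = 0 \leq \ell/2$, while on $\mathcal{A}^{\his}_\sigma$ the displayed inequality gives $\Lambda_\tau - \Lambda_\sigma \geq \tfrac{1}{2}(\tau - \sigma) = t/2 = \ell$. Hence on $\mathcal{A}^{\his}_\sigma$,
\[
\{\eta^X(\sigma) \wedge \tau^X - \sigma \geq t\} \subseteq \{M_\tau - M_\sigma \leq \ell/2\} \cap \{\Lambda_\tau - \Lambda_\sigma \geq \ell\},
\]
and Lemma~\ref{le:intensity-visits} bounds the probability of the right-hand side (conditionally on $\mathcal{F}_\sigma$) by $e^{-\ell/8} = e^{-t/16}$. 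Since $\mathcal{A}^{\his}_\sigma \in \mathcal{F}_\sigma$, intersecting with this event and taking conditional expectation given $\mathcal{F}_\sigma$ yields the claimed bound.

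I would make sure to handle the measurability bookkeeping carefully: $\tau$ is a genuine stopping time as a minimum of stopping times ($\eta^X(\sigma)$ and $\tau^X$ are stopping times, and $\sigma + t$ trivially is), and $\cN^X$ being a counting process with the stated intensity is exactly Lemma~\ref{le:intensity}, so no new probabilistic input is needed. The main obstacle — really the only subtle point — is the direction of the monotonicity argument for $\mu_t \geq \beta/2$: one must observe that up until the \emph{first} visit to a new vertex after $\sigma$, the count $\cN^X$ does not increase, so the bound $\cN^X \leq n/2$ inherited from $\mathcal{A}^{\his}_\sigma$ persists on the whole relevant interval, rather than trying (incorrectly) to run the argument on a shrinking "good" event. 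Everything else is a routine instantiation of the abstract lemma, and the constant $16 = 8 \cdot 2$ comes from the factor $\tfrac12$ in $\mu \geq \beta/2 \geq \tfrac12$ combined with the $e^{-\ell/8}$ in Lemma~\ref{le:intensity-visits}.
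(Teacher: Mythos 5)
Your argument is essentially the same as the paper's: both reduce to Lemma~\ref{le:intensity-visits} applied to the counting process $\cN^X$ on $[\sigma,\sigma+t]$ with $\ell=t/2$, using the key observation that on $\{\eta^X(\sigma)\wedge\tau^X-\sigma\geq t\}$ the count $\cN^X$ is constant on $[\sigma,\sigma+t]$, so $\cA^\his_\sigma$ propagates forward and the intensity bound $\mu\geq\beta/2>1/2$ holds throughout, giving $\Lambda_{\sigma+t}-\Lambda_\sigma\geq t/2$. Your extra truncation $\tau=(\sigma+t)\wedge\eta^X(\sigma)\wedge\tau^X$ is a harmless variant (the paper simply takes $\tau=\sigma+t$, which coincides on the relevant event), and the slight notational slip in defining $M$ as a time-shift of $\cN^X$ but then evaluating $M_\tau-M_\sigma$ does not affect the substance.
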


\begin{proof} 
By  definition of $\eta^{X}(\sigma)$,
between times $\sigma$ and $\eta^{X}(\sigma) \wedge \tau^{X}$ there
are no jumps to previously unvisited vertices.
In particular $\eta^{X}(\sigma) \wedge \tau^X -
\sigma \geq t$ implies that $\mathcal{N}^X_{\sigma + t} -
\mathcal{N}^X_{\sigma} = 0$ and that $\mathcal{A}^{\his}_{\sigma + t}$
holds. Thus Lemma \ref{le:intensity} implies,
with $\mu_{t}$ being the
intensity of $\mathcal{N}^X_{t}$ and $\Lambda_{t} = \int_{0}^{t}
\mu_{s} \, ds$, that then $\Lambda_{\sigma + t} - \Lambda_{t} \geq
t/2$. Applying Lemma \ref{le:intensity-visits} with $M_{t} =
\mathcal{N}^{X}_{t}$, $\tau = \sigma + t$ and $\ell = t/2$ easily
gives the desired estimate.
\end{proof}

\section{Balance}\label{sec:balance}

This section contains the main work
of the paper.  The goal of the section is to prove that large cycles
are
`balanced' in the sense that they contain roughly equal numbers of
vertices passed in the directions $\up$ and $\dn$. In fact we show
that, with high probability, in a cycle which is at least 
$\floor{n^{1/2}}$ long each segment of $\floor{n^{1/2}}$
consecutive vertices  is balanced in this sense.   Throughout the
section we work with a random $\om\in\Om$ sampled from the Poisson
measure $\PP_\b$ for some fixed $\b>1$ (recall that $\nu\in[0,1)$ is
fixed).

We start by introducing some notation.
Given  $\om\in\Om$, $v\in V_n$ and $k\in\mathbb{N}$,
let us write 
$\cC_\om(v)=(v_1^{d_1},v_2^{d_2},\dotsc,v_\ell^{d_\ell})\in \cycles_\om$
for the cycle containing $v$.  Without loss of generality we may
assume that $v=v_1$ and
that  $d_1=\up$.  Under these assumptions, we let 
\[
c_\om(v,k):=\{v_i\in\cC_\om(v):1\leq i\leq k\wedge \ell\}
\]
denote the set of the first $k$ vertices of $\cC$ following $v$.
Note that if the cycle containing $v$
has length smaller than $k$ then 
$|c_{\omega}(v,k)|=|\cC_\om(v)|<k$. 
We further let
\[\begin{split}
c^\up_\om(v,k):=\{v_i\in c_\om(v,k):d_i=\up\},\qquad
c^\dn_\om(v,k):=\{v_i\in c_\om(v,k):d_i=\dn\}
\end{split}\]
denote those vertices in $c_\om(v,k)$ which are passed in the
same, respectively opposite, direction as $v$.
Finally we define the \emph{balance}
$\bal_\omega(v,k)$ of the segment of length $k$ after $v$ in the loop, as
\begin{equation*}
\bal_\omega(v,k):=|c_{\omega}^{\uparrow}(v,k)|-
|c_{\omega}^{\downarrow}(v,k)|.
\end{equation*}

The main result of this section is the following proposition, which tells us that 
$\bal_\omega(v,\floor{n^{1/2}})$ is typically of much smaller order
than $n^{1/2}$.

\begin{proposition}[Segments of cycles are balanced]
\label{prop:preBalanceFact} 
Let $\beta_{1}>\beta_{0}>1$. There
exist $C,c>0$ such that for all 
$\beta\in[\beta_{0},\beta_{1}]$ and
for any $v\in V_n$, we have
\[ 
\PP_{\beta}\left(\left\{ 
\big|\bal_\omega(v,\floor{n^{1/2}})\big|
\geq n^{5/12}\log^{3}n\right\} \cap
\left\{ |\cC_\om(v)|\geq \floor{n^{1/2}}\right\}
\right)\leq Ce^{-c\log^{2}n}.
\]
\end{proposition}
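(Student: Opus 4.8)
The plan is to realise the initial segment $c_\om(v,\floor{n^{1/2}})$ of the cycle through $v$ by running the exploration process $X=(X_t)_{t\ge 0}$ from $(v,0,+1)$, and to bound its balance by comparison with the simple exploration. Let $W_t$ denote the number of up-crossings minus the number of down-crossings of $\ph=0$ made by $X$ on $[0,t]$. On the event $\{|\cC_\om(v)|\ge\floor{n^{1/2}}\}$, writing $\sigma$ for the (stopping) time of the $\floor{n^{1/2}}$-th visit of $X$ to $\ph=0$, one has $\sigma\le\tau^X$, and the successive visits of $X$ to $\ph=0$ up to time $\sigma$ are precisely the vertices of $c_\om(v,\floor{n^{1/2}})$ together with their directions; hence $\bal_\om(v,\floor{n^{1/2}})=W_\sigma+O(1)$. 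Moreover (using the renewal description of the simple exploration recalled below) $X$ visits $\ph=0$ at a positive rate until it closes, so $\sigma\le C_0 n^{1/2}$ with probability $\ge1-e^{-c\log^2 n}$; it therefore suffices to bound $|W_\sigma|$ on $\{\sigma\le C_0 n^{1/2}\}$.

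The next step is to cut $[0,\sigma]$ into \emph{good stretches}, on each of which $X$ coincides with a freshly started simple exploration, separated by short \emph{bad stretches}. One applies Lemma~\ref{fact:coupling} at the first jump of $X$ to couple $X$ with a simple exploration up to a first failure time; by Corollary~\ref{cor:visits-intensity} (applicable on the events $\cA^{\his}_t$, which hold up to time $C_0 n^{1/2}$ with probability $\ge1-e^{-cn}$ by \eqref{Ahis-bound}) within time $\log^2 n$ after a failure $X$ jumps to a previously unvisited vertex, and there one restarts the coupling with a fresh simple exploration, and iterates. The key point is that failures are rare: a failure occurs only when the driving Poisson clock proposes an already-visited vertex, an event of conditional rate at most $\tfrac{\beta}{n-1}(\cJ^X_t+1)$ by Lemma~\ref{le:intensity}; since $\cJ^X_t\le N_t$, integrating up to time $C_0 n^{1/2}$ shows the total number of failures is stochastically dominated by a Poisson variable of mean $O(1)$, hence is $\le\log^2 n$ with probability $\ge1-e^{-c\log^2 n}$, while (by Corollary~\ref{cor:visits-intensity} and a union bound) every recovery takes time $\le\log^2 n$ with probability $\ge1-e^{-c\log^2 n}$. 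On the intersection of these events $[0,\sigma]$ is covered by at most $\log^2 n+1$ good stretches and at most $\log^2 n$ bad stretches of total length $\le\log^4 n$, so the bad stretches contribute at most $\log^4 n$ to $|W_\sigma|$.

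It remains to bound $W$ over a single good stretch, where $X$ agrees with a simple exploration $Y$ that — conditionally on surviving, which holds here since $X$ does not close before $\sigma$ — enjoys the i.i.d.\ renewal structure of Proposition~\ref{fact:loop_closure}. Between consecutive frontier times $\ell_k$ and $\ell_{k+1}$ the trajectory of $Y$ contributes to the up-minus-down count a signed amount whose magnitude is nonnegative and bounded by the renewal length $\Delta_k$ (so it has exponential tails, by \eqref{eq:Dtails}) and whose sign is the running product of the i.i.d.\ link marks $\xi_i$, a geometrically mixing walk on $\{-1,+1\}$. The $W$-increment over a stretch tracing $\ell$ cycle-vertices is thus a sum of order $\ell$ such terms; its mean is $O(1)$ (the signs average out geometrically), and grouping the renewal blocks into runs of $O(\log n)$ consecutive blocks to decorrelate the signs and applying a Bernstein-type bound shows it deviates from its mean by more than $\sqrt{\ell}\,\log^{3/2}n$ with probability at most $e^{-c\log^3 n}$. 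A union bound over the at most $n^{1/2}$ possible stretch lengths makes this simultaneous, at the cost of weakening the bound to $e^{-c\log^2 n}$.

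On the intersection of all these high-probability events we conclude, writing $L_1,\dots,L_J$ for the numbers of cycle-vertices traced in the good stretches ($J\le\log^2 n+1$ and $\sum_j L_j\le\floor{n^{1/2}}$),
\[
\big|\bal_\om(v,\floor{n^{1/2}})\big|\le\sum_{j=1}^J\big(\sqrt{L_j}\,\log^{3/2}n+O(1)\big)+\log^4 n\le\Big(J\sum_{j}L_j\Big)^{1/2}\log^{3/2}n+2\log^4 n
\]
by Cauchy--Schwarz, and the right-hand side is at most $\sqrt2\,n^{1/4}\log^{5/2}n+2\log^4 n\le n^{5/12}\log^3 n$ for $n$ large; all the exceptional events together carry probability $\le C e^{-c\log^2 n}$, with $C,c$ depending only on $\beta_0,\beta_1$ because the constants in Lemma~\ref{fact:coupling}, Corollary~\ref{cor:visits-intensity} and Proposition~\ref{fact:loop_closure} are uniform for $\beta\in[\beta_0,\beta_1]$. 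The hard part will be the third paragraph: extracting from the renewal structure of the simple exploration a concentration estimate for the winding with a stretched-exponential tail strong enough to survive the union bounds; by comparison, realising $\bal_\om$ through the exploration and controlling the (rare) coupling failures is more routine, though the book-keeping of the stretches and of the survival conditioning must be handled with some care. (This strategy in fact produces a bound of order $n^{1/4}$ up to logarithms, comfortably inside the stated estimate, so a cruder per-stretch bound — for instance the trivial $O(\ell)$ bound on short stretches together with a sub-division argument — would suffice as well.)
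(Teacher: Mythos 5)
Your proposal follows the same broad strategy as the paper — realise $\bal_\om(v,\floor{n^{1/2}})$ via the exploration $X$, couple $X$ with copies of the simple exploration $Y$, and control the winding using the renewal structure of $Y$ — but it chooses a genuinely different decomposition of time. The paper fixes a deterministic grid of intervals of length $n^{1/3}$ (and, within Proposition \ref{prop:goodBalanceForExploration}, a finer grid of length $n^{1/6}$), which forces it to prove the winding bound uniformly over all starting points $(v,\varphi)$ (Corollary \ref{cor:goodBalanceForEverybody}) because the exploration restarts each interval at a random location. Your adaptive decomposition into good stretches bounded by coupling restart times avoids this uniformization step entirely and, by using Cauchy--Schwarz across a poly-logarithmic number of stretches rather than adding up $\sim n^{1/6}$ per-interval bounds, lands a sharper exponent ($n^{1/4}$ rather than $n^{5/12}$, up to logs), which you correctly note. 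This is a nice structural improvement.

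There are, however, two places where the argument is underspecified. First, in your second paragraph you count a restart only when ``the driving Poisson clock proposes an already-visited vertex''. But the coupling of Lemma \ref{fact:coupling} is only guaranteed to hold for $t<\tau^Y\wedge T$; if the simple exploration itself closes ($\tau^Y<\infty$, which happens with probability $1-z>0$), the coupling ends without any clock-based divergence and you must still restart. These geometric restarts occur a constant number of times in expectation as well, and the conclusion ``$\leq\log^2 n$ restarts with probability $\ge 1-e^{-c\log^2 n}$'' survives, but the claim that failures are stochastically dominated ``by a Poisson variable of mean $O(1)$'' overlooks this second source of restarts (which the paper handles via the variable $k_0^i$ and the bound \eqref{k0-tails}). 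Second, the concentration step for the simple exploration in your third paragraph is the genuinely delicate part, and your description is imprecise. The winding increment over a renewal block $[\ell_k,\ell_{k+1}]$ does not factor as (random sign)\,$\times$\,(nonnegative magnitude): within a block the direction flips at every encountered bar (including those later backtracked), so the increment can already have both signs internally. What is true is that the \emph{direction at consecutive frontier times} evolves as a product of the (i.i.d.) marks of the frontier links only, and the increment has magnitude $\le\Delta_k$ with exponential tails. Your proposed route of grouping $O(\log n)$ blocks and invoking Bernstein after decorrelation can be made rigorous, but the paper's Proposition \ref{prop:windingconcentration} sidesteps all of this cleanly: by restricting to the bar-frontier times $\ell^*_k$ one finds that consecutive increments are distributional negatives of one another, so the paired increments $\cQ_k=\cL_{\ell^*_{k+2}}-\cL_{\ell^*_k}$ are symmetric and the even-indexed ones are i.i.d., reducing everything to a Chernoff/Etemadi argument. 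You should either adopt this symmetry observation or spell out the mixing argument carefully, in particular handling the conditioning induced by the fact that your stretch endpoints are stopping times rather than deterministic.
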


This says that cycles of length at least $\floor{n^{1/2}}$
are very likely to have balance 
$|\bal|<n^{5/12}\log^{3}n\ll n^{1/2}$.
Cycles containing
fewer than $\lfloor n^{1/2}\rfloor$ vertices
may possibly be unbalanced, but this does not concern us.
A key feature of this result is that the upper bound kills
any polynomial in $n$, making it possible to use quite crude union
bounds later in the proof of Theorem \ref{thm:main}.
Also note that the bound is claimed to be uniform in $\b\in[\b_0,\b_1]$
for any $\b_1>\b_0>1$. This will allow us to
derive a version of the proposition stated above where we
`remove' a deterministic number of links from $\om$,
which will be important for the coupling
with PD($\tfrac12$) in Section \ref{sec:PDcoupling}.

To formulate the last claim 
precisely, recall the notation $\vec\om$ for the ordered
list of links of $\om$, and note that $\cC_\om(v)$, $c_{\omega}(v,k)$,
$c^{\up}_{\omega}(v,k)$, $c^{\dn}_{\omega}(v,k)$ and
$\bal_\omega(v,k)$ all depend on $\vec\om$ only.
Also recall that, for $s\leq|\om|$, we write 
$\vec{\omega}_{s}=\left\{ (e_{1},m_{1}),\ldots,(e_{s},m_{s})\right\}$
for the sequence of the first $s$ links.  If $X_\om$ is a random
variable which only depends on the relative order of links in $\om$ we
write $X_{\vec\om}$ for its value on any link configuration with the
same relative order.
\begin{proposition}
\label{prop:balanceFact}
Let $\beta>1$ and $\rho\in[0,1)$.  For $1\leq s\leq |\om|$ write 
\[
\cA(v,s):= \left\{ \bal_{\vec{\omega}_s}(v,\lfloor
  n^{1/2}\rfloor)\geq n^{5/12}\log^{3}n\right\} 
\cap
\left\{ |\cC_{\vec{\omega}_{s}} (v)|\geq \floor{n^{1/2}}\right\}.
\]
There exist $C,c>0$ such that 
\begin{equation*}
\PP_{\beta}\left(\bigcup_{v\in V_n}\bigcup_{ |\omega|-n^{\rho}\leq s\leq |\omega|}
\cA(v,s) \right) \leq Ce^{-c\log^{2}n}.
\label{eq:balanceFact}
\end{equation*}
\end{proposition}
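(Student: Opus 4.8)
The plan is to reduce Proposition \ref{prop:balanceFact} to the single-configuration estimate of Proposition \ref{prop:preBalanceFact} by a union bound over $v$ and over the number of links $s$, combined with a distributional identity that relates the truncated configuration $\vec\omega_s$ to a Poisson-links process with a slightly different intensity. First I would control $|\omega|$: under $\PP_\beta$ it is Poisson with mean $\tfrac{\beta}{n-1}\binom n2 = \tfrac{\beta}{2} n$, so by standard Poisson concentration there is $c>0$ with $\PP_\beta(\cG^c)\leq 2e^{-cn}$, where $\cG := \{|\omega|\in I_n\}$ and $I_n := [\tfrac{\beta}{2}(1-\eps)n,\tfrac{\beta}{2}(1+\eps)n]$ for a small fixed $\eps>0$ to be chosen. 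On $\cG$ the (random) index set $\{s : |\omega|-n^\rho\leq s\leq |\omega|\}$ lies in the deterministic set $J_n := [\tfrac{\beta}{2}(1-\eps)n - n^\rho,\ \tfrac{\beta}{2}(1+\eps)n]$, whose cardinality is $O(n)$, so
\[
\PP_\beta\Big(\bigcup_{v\in V_n}\bigcup_{|\omega|-n^\rho\leq s\leq|\omega|}\cA(v,s)\Big)
\leq 2e^{-cn} + \sum_{v\in V_n}\sum_{s\in J_n\cap\NN}\PP_\beta\big(\cA(v,s)\big).
\]

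The key step is the distributional identity for $\vec\omega_s$. Conditionally on $|\omega|$, the $S^1$-coordinates of the links of $\omega\sim\PP_\beta$ are i.i.d.\ uniform, and the edges and marks are attached independently of those coordinates; hence the ordered list $\vec\omega_s$ of the first $s$ links is, conditionally on $\{|\omega|\geq s\}$, simply a sequence of $s$ i.i.d.\ pairs, each a uniform edge together with an independent mark that is a cross with probability $\nu$, and this is independent of $|\omega|$. But that is exactly the law of the full ordered link list of a configuration $\omega'\sim\PP_{\beta'}$ conditioned on $\{|\omega'|=s\}$, for any $\beta'>0$. Since $\cA(v,s)$ depends on $\vec\omega_s$ only through this ordered list, and $\bal_{\omega'}, \cC_{\omega'}$ likewise depend on $\vec{\omega'}$ only, writing $\tilde\cA(v):=\{\bal_{\omega'}(v,\floor{n^{1/2}})\geq n^{5/12}\log^3 n\}\cap\{|\cC_{\omega'}(v)|\geq\floor{n^{1/2}}\}$ and choosing $\beta':=2s/n$ (so that $|\omega'|$ is Poisson with mean exactly $s$) gives
\[
\PP_\beta\big(\cA(v,s)\big) \leq \PP_\beta\big(\cA(v,s)\,\big|\,|\omega|\geq s\big)
= \PP_{\beta'}\big(\tilde\cA(v)\,\big|\,|\omega'|=s\big)
\leq \frac{\PP_{\beta'}(\tilde\cA(v))}{\PP_{\beta'}(|\omega'|=s)}.
\]
The point of the choice $\beta'=2s/n$ is that $\PP_{\beta'}(|\omega'|=s)$ is the probability that a Poisson variable hits its own mean, hence $\geq \tfrac{1}{e\sqrt s}\geq \tfrac{1}{C\sqrt n}$ by Stirling, i.e.\ only polynomially small.

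To finish, note that since $\beta>1$ is fixed and $\rho<1$, for every $s\in J_n$ we have $\beta'=2s/n\in[\beta(1-\eps)-2n^{\rho-1},\ \beta(1+\eps)]$, which for $\eps$ small enough and $n$ large is contained in a fixed interval $[\beta_0,\beta_1]$ with $1<\beta_0<\beta<\beta_1<\infty$. Proposition \ref{prop:preBalanceFact}, applied with this $[\beta_0,\beta_1]$ (and using the trivial inclusion $\{\bal\geq t\}\subseteq\{|\bal|\geq t\}$), yields constants $C,c>0$ with $\PP_{\beta'}(\tilde\cA(v))\leq Ce^{-c\log^2 n}$ uniformly over all such $\beta'$. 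Plugging this into the previous display gives $\PP_\beta(\cA(v,s))\leq C\sqrt n\,e^{-c\log^2 n}$, and summing the $O(n^2)$ terms over $v\in V_n$ and $s\in J_n$ leaves $C'n^{5/2}e^{-c\log^2 n}+2e^{-cn}\leq C''e^{-c'\log^2 n}$ for any $c'<c$ once $n$ is large, as claimed. The step needing real care is the middle one: one must observe that deleting a polynomially small number of links perturbs the effective intensity $\beta'=2s/n$ by only $o(1)$ (so it stays above $1$ and within a fixed compact range where Proposition \ref{prop:preBalanceFact} applies uniformly), and that forcing a Poisson sample to equal its mean costs merely a $\sqrt n$ factor, which the superpolynomial bound of Proposition \ref{prop:preBalanceFact} absorbs.
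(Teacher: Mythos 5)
Your proof is correct, and it reaches the conclusion by a genuinely different route from the paper's. You both exploit the uniformity in $\beta$ of Proposition \ref{prop:preBalanceFact}, but the mechanism for relating $\vec\omega_s$ to an honest Poisson configuration differs. The paper slices the $S^1$-coordinate into $O(e^{\eps\log^2n})$ thin intervals $A_i\setminus A_{i-1}$, introduces the event $\cD$ that no slice carries more than one link, and then notes that on $\cD$ each truncation $\vec\omega_s$ coincides with some $\vec\omega_{A_i}$, whose law under $\PP_\beta$ is \emph{exactly} $\PP_{\bar\beta_i}$ for a nearby intensity $\bar\beta_i\in[\beta_0,\beta_1]$; this gives Proposition \ref{prop:preBalanceFact} directly, with no conditioning cost, and the union bound runs over the polynomially-in-$e^{\log^2n}$-many indices $i$ rather than over $s$. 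You instead condition on the Poisson count: you correctly observe that, conditionally on $\{|\omega|\ge s\}$, the ordered list $\vec\omega_s$ is an i.i.d.\ sequence of (uniform edge, independent mark) pairs whose law is exactly that of $\vec{\omega'}$ for $\omega'\sim\PP_{\beta'}$ conditioned on $\{|\omega'|=s\}$, and that choosing $\beta'=2s/n$ makes the conditioning event only polynomially unlikely. The price is a factor $O(\sqrt n)$ per term and an $O(n)$-sized union over $s$, both of which are harmless against the superpolynomial bound from Proposition \ref{prop:preBalanceFact}. Your route is arguably more elementary — it uses only the description of a Poisson process conditioned on its count, plus Stirling — whereas the paper's route is a bit slicker because restricting a Poisson process to $E\times[0,a]$ \emph{is} a Poisson process, so no conditioning enters. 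Both correctly require $s$ to stay in a range where $\beta'$ remains bounded away from $1$, which you ensure via the concentration event $\cG$ and the assumption $\rho<1$.

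One small remark: the inequality $\PP_\beta(\cA(v,s))\le\PP_\beta(\cA(v,s)\mid|\omega|\ge s)$ relies on the implicit convention $\cA(v,s)\subseteq\{|\omega|\ge s\}$ (the event is vacuous when $s>|\omega|$), which is consistent with the paper's ``for $1\le s\le|\omega|$'' but worth stating explicitly. Also, Proposition \ref{prop:preBalanceFact} controls $|\bal|$, while the statement here has $\bal$ without absolute value; your observation $\{\bal\ge t\}\subseteq\{|\bal|\ge t\}$ handles this, and in fact the paper treats the two as interchangeable throughout.
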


\noindent
The proofs are given at the end of the section, after
several preparatory results.

\subsection{Winding processes}

Recall the notation $X_t=(v_t,\varphi_t,d_t)$ and 
$Y_t=(k_t,v_t,\varphi_t,d_t)$ for the exploration and simple
exploration, in particular that $d_t\in\{-1,+1\}$ indicates the direction
of motion.
We will use superscripts $X$ and $Y$ on $v_t,\varphi_t,d_t$
to distinguish between the two processes.
Define  the \emph{winding processes} 
$\left\{ \mathcal{L}_{t}^{X}\right\}_{t\geq0}$
and 
$\left\{ \mathcal{L}_{t}^{Y}\right\} _{t\geq0}$ by
\[
\cL^X_t:=\int_0^t d^X_s\,ds,\qquad
\cL^Y_t:=\int_0^t d^Y_s\,ds.
\]
Thus $\cL^X$ increases at rate $1$ when the process $X$
travels in the positive direction,
otherwise it decreases at rate $1$, and the same is true for $\cL^Y$.  

To prove Proposition \ref{prop:preBalanceFact} 
we will first estimate $\cL^X$, and then transfer these estimates to $\bal$.
In order to estimate $\cL^X$ we will use the coupling of $X$ and $Y$
introduced in Lemma \ref{fact:coupling}, 
together with the following estimate on $\cL^Y$:

\begin{proposition}[Winding of the simple exploration process]
\label{prop:windingconcentration}
Let $\beta_{1}>\beta_{0}>1$.  There exist $C,c>0$
such that for any $\beta\in[\beta_{0},\beta_{1}]$, $T>0$ and
$s\in[1,T^{1/2}]$ we have
\[
\PP_\b\Big(\sup_{t\leq T}|\mathcal{L}_{t}^{Y}|\geq sT^{1/2}
\,\big|\, \cS\Big)\leq C\exp(-cs^{2}),
\]
where $\cS=\{\tau^{Y}=+\infty\}$.
\end{proposition}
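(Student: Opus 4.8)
The plan is to analyse $\cL^Y$ by decomposing the simple exploration into the i.i.d.\ pieces delimited by the frontier times $\ell_k$, using Proposition \ref{fact:loop_closure}. Conditionally on the survival event $\cS$, write $\cL^Y_t = \sum_{k : \ell_{k+1}\le t}\big(\cL^Y_{\ell_{k+1}}-\cL^Y_{\ell_k}\big) + \big(\cL^Y_t - \cL^Y_{\ell_{K(t)}}\big)$, where $K(t)$ is the number of completed frontier intervals by time $t$. By part (3) of Proposition \ref{fact:loop_closure} the increments $W_k := \cL^Y_{\ell_{k+1}}-\cL^Y_{\ell_k}$ are i.i.d., and each $|W_k|\le\D_k$, which has exponential tails by part (4); hence each $W_k$ is a centered (by the symmetry coming from $\nu<1$, to be argued below) sub-exponential random variable. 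First I would show $\E[W_k\mid\cS]=0$: within a frontier interval, the direction $d^Y$ is flipped each time a bar is traversed, and a short combinatorial/symmetry argument (reflecting the signs $\xi_i$ and the $\pm1$ walk $Z$) shows the signed area $W_k$ has a symmetric distribution, so its mean vanishes. Then a Bernstein/sub-exponential concentration inequality for $\sum_{k\le m}W_k$ gives $\PP(|\sum_{k\le m}W_k|\ge x\mid\cS)\le C\exp(-c\min(x^2/m,x))$.

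Next I would control $K(T)$, the number of frontier intervals completed by time $T$. Since the $\D_k$ are i.i.d.\ with exponential tails and positive mean $\mu:=\E[\D_0\mid\cS]$, standard renewal/large-deviation estimates give $\PP(K(T)\ge 2T/\mu\mid\cS)\le C e^{-cT}$, so with overwhelming probability $K(T)\le MT$ for a constant $M$ depending only on $\b_0,\b_1$ (uniformity in $\b$ follows since all the constants $C,c$ in Proposition \ref{fact:loop_closure} can be taken uniform on the compact interval $[\b_0,\b_1]$). On this event, $\sup_{t\le T}|\cL^Y_t|$ is at most $\sup_{m\le MT}|\sum_{k\le m}W_k| + \sup_{k}\D_k\mathbf{1}_{\{\ell_k\le T\}}$; the last term is a maximum of at most $MT$ i.i.d.\ exponential-tailed variables, which exceeds $sT^{1/2}$ with probability at most $CT e^{-c s T^{1/2}}\le C'e^{-c's^2}$ for $s\le T^{1/2}$ (absorbing the polynomial factor). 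For the first term, apply a maximal inequality (e.g.\ via Doob/Etemadi on the random walk $\sum_k W_k$) and then the sub-exponential tail bound with $m=\lceil MT\rceil$ and $x = sT^{1/2}$: since $x^2/m \asymp s^2$ and $x = sT^{1/2}\ge x^2/m$ precisely when $s\le T^{1/2}$, the dominant term is $\exp(-cs^2)$, as required.

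Assembling these, $\PP_\b(\sup_{t\le T}|\cL^Y_t|\ge sT^{1/2}\mid\cS)\le \PP(K(T)> MT\mid\cS) + \PP(\sup_{k\le MT}\D_k> sT^{1/2}\mid\cS) + \PP(\sup_{m\le MT}|\sum_{k\le m}W_k|\ge sT^{1/2} - sT^{1/2}/2\mid\cS)$ — rescaling constants appropriately — and every term is bounded by $C\exp(-cs^2)$ with $C,c$ depending only on $\b_0,\b_1$. The main obstacle I anticipate is the centering step, $\E[W_k\mid\cS]=0$: one must be careful that conditioning on $\cS$ (a non-local, future-dependent event) does not bias the sign of the signed winding area inside a renewal block. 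I would handle this by exploiting that $\cS$ is an event about $Z$ reaching $-1$ or not, which is measurable with respect to the \emph{jump structure} ($N'$ and the walk $Z$) alone and is independent of the marks $\xi_i$ once the block structure is fixed; conditionally on the jump times, each traversed link is independently a bar with probability $1-\nu>0$, and flipping all the $\xi_i$ in a block negates $W_k$ while preserving everything else, giving the symmetry. A secondary technical point is making all constants genuinely uniform over $\b\in[\b_0,\b_1]$, which follows by inspecting that the tail constants in Proposition \ref{fact:loop_closure} depend continuously on $\b$ and can be taken uniform on compacts.
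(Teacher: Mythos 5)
Your plan has the right overall shape (decompose into frontier blocks, use Proposition~\ref{fact:loop_closure}, get exponential tails for the block lengths, then apply maximal and sub-exponential inequalities), and this is indeed the paper's strategy. However, the centering step is where your argument breaks, and it does so at exactly the point you flagged as the "main obstacle." The increments $W_k=\cL^Y_{\ell_{k+1}}-\cL^Y_{\ell_k}$ are in fact \emph{neither i.i.d.\ nor mean-zero} in general. Write $W_k=d_{\ell_k^+}\,\widehat W_k$ with $\widehat W_k:=\int_{\ell_k}^{\ell_{k+1}}(d_s/d_{\ell_k^+})\,ds$. The "shape" $\widehat W_k$ is indeed i.i.d.\ under $\PP(\cdot\mid\cS)$, but $d_{\ell_k^+}=d_0\prod_{j\le k}\xi_{\text{frontier }j}$ is a product of the i.i.d.\ frontier marks, so it is a sign random walk that depends cumulatively on all earlier blocks; hence the $W_k$ are not i.i.d. Moreover $\E[\widehat W_k]\neq 0$: the block always opens moving in the direction $d_{\ell_k^+}$, and for $\nu$ close to $1$ the direction rarely flips inside the block, so $\widehat W_k$ is strongly positive on average. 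Your flipping argument does not repair this: if you flip every $\xi_i$ traversed in the block, then $d_s\mapsto(-1)^{\#\{\text{traversals in }(\ell_k,s]\}}d_s$, which alternates sign as the block's internal links are discovered and backtracked and therefore does \emph{not} negate $W_k$ whenever the block contains an internal (backtracked) link — for example with one internal link the three consecutive sub-intervals contribute with signs $-,+,-$ relative to the original.

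The paper gets around this by passing to the subsequence $\{\ell^\ast_k\}$ of frontier times at which the newly discovered frontier link is a \emph{bar}. Between consecutive $\ell^\ast$'s, the sign at the frontier times is constant (all intervening frontier links are crosses, and every other link is backtracked and so contributes $\xi^2=1$), while the bar at $\ell^\ast_{k+1}$ flips the sign, so $d_{\ell^\ast_{k+1}^+}=-d_{\ell^\ast_k^+}$. This produces the exact distributional antisymmetry $\cL_{\ell^\ast_{k+2}}-\cL_{\ell^\ast_{k+1}}\eqdist-(\cL_{\ell^\ast_{k+1}}-\cL_{\ell^\ast_k})$ that your flipping argument was trying to conjure; the paper then sets $\cQ_k:=\cL_{\ell^\ast_{k+2}}-\cL_{\ell^\ast_k}$, which are genuinely symmetric with exponential tails, and $\{\cQ_{2k}\}_{k\ge 0}$ independent. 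After that, the rest of your plan (Etemadi plus exponential Chebyshev plus controlling the number of blocks in $[0,T]$, with uniformity in $\b\in[\b_0,\b_1]$ coming from continuity) matches the paper's execution. So the missing idea is precisely this passage to the bar-frontier subsequence and the pairing of consecutive blocks to manufacture symmetry; without it, the sum $\sum_k W_k$ you propose to control has a drift term coming from $\E[\widehat W_k]\ne 0$ modulated by the sign walk, and your Bernstein bound does not apply.
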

\begin{proof}
Fix $\b\in[\b_0,\b_1]$.
We use Proposition \ref{fact:loop_closure}  and the notation therein,
we also write 
$\ol{\PP}(\cdot)=\PP_\b(\cdot\mid\cS)$.
For lighter notation, within this proof let 
$\cL_t=\cL^Y_t$.

At each frontier time $\ell_{k}$ the process $Z$ jumps, 
meaning that $Y$ discovers a new link.  We let
$\ell_0^\ast = 0$ and let 
$\{\ell^\ast_{k}\} _{k\geq1}$ be the subsequence consisting of the  
times $\ell_{k}$
at which the link is marked as a bar (i.e.\ $\xi_i=-1$ in the notation of
Definition \ref{def:simpleexplorationprocess}).   
As the choice of markings is independent
of $Z$, using Proposition \ref{fact:loop_closure} 
we conclude that $\D^\ast_{k}\eqdef \ell^\ast_{k+1}-\ell^\ast_{k}$
form an i.i.d.\ sequence under $\ol\PP$, satisfying
\begin{equation*}
\ol\PP(\D^\ast_{k}\geq s)\leq \tilde{C}e^{-\tilde{c}s},
\quad s\geq0, \label{eq:tildeDeltaTails}
\end{equation*}
for some $\tilde{C}, \tilde{c} > 0$. Also, the 
increments $\mathcal{L}_{\ell^\ast_{k+1}}-\mathcal{L}_{\ell^\ast_{k}}$
are independent under $\ol\PP$. 

Now, the key observation is that we have the equality in distribution
\[
\mathcal{L}_{\ell^\ast_{k+2}}-\mathcal{L}_{\ell^\ast_{k+1}}\eqdist
-\big(\mathcal{L}_{\ell^\ast_{k+1}}-\mathcal{L}_{\ell^\ast_{k}}\big),
\]
because upon crossing a bar
the winding processes changes its orientation.
Using these facts we infer that for any $k\in\left\{ 0,1,\ldots\right\} $
\[
\cQ_{k}:=\mathcal{L}_{\ell^\ast_{k+2}}-\mathcal{L}_{\ell^\ast_{k}}=
\big(\mathcal{L}_{\ell^\ast_{k+2}}-\mathcal{L}_{\ell^\ast_{k+1}}\big)
+\big(\mathcal{L}_{\ell^\ast_{k+1}}-\mathcal{L}_{\ell^\ast_{k}}\big)
\]
are symmetric random variables with $\left\{ \cQ_{2k}\right\} _{k\geq0}$
being independent. Moreover, 
by $|\cL_{\ell^\ast_{k+1}}-\cL_{\ell^\ast_{k}}|\leq \D^\ast_{k}$
and \eqref{eq:tildeDeltaTails} they have exponential tails. 

Let us set $K=\floor{c_{1}T},$ for some $c_{1}>0$ to be chosen later. 
We consider the
cases $\sum_{i=0}^{2K}\D^\ast_{i}> T$ and
$\sum_{i=0}^{2K}\D^\ast_{i}\leq T$ separately.
If $\sum_{i=0}^{2K}\D^\ast_{i}> T$ then we can cover $[0,T]$
with the intervals $[\ell^\ast_{2k},\ell^\ast_{2k+2})$ for $k\leq K$. As $\cL_t$
is continuous in $t$, we can replace the supremum
of $|\mathcal{L}_t|$ by maximum. 
The maximum $\max_{0\leq t\leq T}\cL_t$ 
can then be bounded by the maximum at endpoints
$\ell^\ast_{2k}$ plus the maximum increment over all the
intervals $[\ell^\ast_{2k},\ell^\ast_{2k+2})$.
The maximum increment on $[\ell^\ast_{2k},\ell^\ast_{2k+2})$
is in turn bounded by the length 
$\D^\ast_{2k}+\D^\ast_{2k+1}$.  We thus get 
\begin{align*}
\ol{\PP}\left(\max_{t\leq T}|\mathcal{L}_{t}|\geq sT^{1/2}\right)
& 
\leq\ol{\PP}\left(\max_{k\leq K}\Big|\sum_{i=0}^{k}\cQ_{2i}\Big|+
\max_{k\leq K} \big(\D^\ast_{2k}+\D^\ast_{2k+1}\big)\geq
  sT^{1/2}\right)\\
&\qquad+\ol{\PP}\left(\sum_{i=0}^{2K}\D^\ast_{i}\leq T\right).
\end{align*}
The first of these two terms can be bounded by applying a union bound
and Etemadi's inequality \cite[Thm.\ 22.5]{billingsley},
giving
\begin{multline}\label{3terms}
\ol{\PP}\left(\max_{k\leq K}\Big|\sum_{i=0}^{k}\cQ_{2i}\Big|+
\max_{k\leq K} \big(\D^\ast_{2k}+\D^\ast_{2k+1}\big)\geq
  sT^{1/2}\right)\\
\leq 
3\max_{k\leq K }
\ol{\PP}\left(\Big|\sum_{i=0}^{k}\cQ_{2i}\Big|\geq \tfrac16sT^{1/2}\right)
 +K\ol{\PP}\left(\D^\ast_{0}+\D^\ast_{1}\geq \tfrac12sT^{1/2}\right),
\end{multline}
where we have used that $\D^\ast_{k}$ are i.i.d. By symmetry of $\cQ_{2i}$ and Markov's inequality we have 
for any $\theta>0$ that
\[
\ol{\PP}\left(\Big|\sum_{i=0}^{k}\cQ_{2i}\Big|\geq \tfrac16sT^{1/2}\right)
=2\ol{\PP}\left(\sum_{i=0}^{k}\cQ_{2i}\geq \tfrac16sT^{1/2}\right)
\leq2\frac{\left(\ol{\E}[e^{\theta \cQ_{0}}]\right)^{k}}
{\exp(\tfrac16\theta sT^{1/2})}.
\]
For small enough $\theta$ the Laplace transform 
$\ol{\E}[e^{\theta \cQ_{0}}]$ is finite, since the
$\cQ_i$'s have exponential tails.  Using that $\ol{\E}[\cQ_{0}]=0$ we
see that there exist $\theta_0,c_2>0$ such that if $\theta\leq
\theta_0$ then $\ol{\E}[e^{\theta \cQ_{0}}]\leq e^{c_{2}\theta^{2}}$.
Setting $\theta=sT^{-1/2}/(12 c_1 c_2)$ with $c_1$
chosen  large enough so
that $\theta<\theta_0$ we get (using $k\leq K\leq c_1T$ and $s\leq
T^{1/2}$) 
\[
\ol{\PP}\left(\Big|\sum_{i=0}^{k}\cQ_{2i}\Big|\geq
  \tfrac16sT^{1/2}\right)
\leq 2\exp\big(c_1c_2\theta^2 T- \tfrac16\theta sT^{1/2}\big)
\leq 2\exp(-c_{3}s^{2}),
\]
with $c_{3}=(72c_1c_2)^{-1}$. 

For the second term 
on the right in \eqref{3terms} we recall that the
$\D^\ast_{k}$'s
have exponential tails. As $s\in[1,T^{1/2}]$, we 
thus obtain for some $C_{4},c_{4},C_{5},c_{5}>0$
\[
K\ol{\PP}\left(\D^\ast_{0}+\D^\ast_{1}\geq \tfrac12 sT^{1/2}\right)
\leq  K C_{4}e^{-c_{4}sT^{1/2}}\leq C_{5}e^{-c_{5}s^{2}}.
\]
Finally, by standard large deviation considerations for i.i.d. variables we have
\[
\ol{\PP}\left(\sum_{i=0}^{2K}\D^\ast_{i}\leq T\right)
\leq C_{6}e^{-c_{6}K},
\] for some $C_6, c_6 > 0$, provided we choose $c_{1}$ large
enough so that the mean of the sum above is larger than $T$.

This proves the claim for any fixed $\b\in[\b_0,\b_1]$.  The
uniformness over such $\b$ follows since the upper bound can be chosen
as a continuous function of $\b$.
\end{proof}

In order to compare $\cL$ with $\bal$ we need to keep track of how
many times the exploration passes level $0\in S^1$.
To this end we make the following definitions.
Denote by $\cK^X_t(v,\varphi)$ (respectively, $\cK^Y_t(v,\varphi)$)
the number of times $X$ (respectively, $Y$) passes through $0\in S^1$
when started at $(v,\varphi)$ moving in the positive direction
($d_0=+1$) and run for time $t$.
We will write $\cK^X_t$ (respectively, $\cK^Y_t$) when the starting
point is not ambiguous. Recall the definition of
$\cI_{t}^{X}(v,\varphi)$ given right after \eqref{eq:Yhistory}.
\begin{proposition}[Winding and the number of visited vertices]
\label{fact:lapProcessUpperBound}
For any $t>0$ and any $(v,\varphi)\in V_n \times S^1$ we have 
\be\label{laps-ub-eq}
\lapProcess_{t}^{X}(v,\varphi)\leq t+\cI_{t}^{X}(v,\varphi)+1,
\quad\lapProcess_{t}^{Y}(v,\varphi)\leq t+\cI_{t}^{Y}(v,\varphi)+1.
\ee
Moreover, there exists $c=c(\b)>0$ and $t_0=t_0(\b)$ such that
\be\label{laps-lb-eq}
\EE_\b[\cK^Y_t\mid \cS]\geq ct,
\quad \mbox{for all } t\geq t_0,
\ee
where as usual $\cS=\{\tau^Y=+\oo\}$.
\end{proposition}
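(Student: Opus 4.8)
The plan is to prove the two inequalities in \eqref{laps-ub-eq}--\eqref{laps-lb-eq} separately, the first being a short counting argument and the second the substantive part.

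For \eqref{laps-ub-eq} I would argue as follows. Between two consecutive traversals of links, $X$ (respectively $Y$) moves continuously at unit speed on a single circle of circumference $1$, and a continuous arc of length $\delta$ on such a circle passes through $0\in S^1$ at most $\delta+1$ times. Up to time $t$ the traversals of links split $[0,t]$ into at most $\cI^X_t+1$ such arcs, of total length $t$; summing the bound $\delta_i+1$ over these arcs gives $\cK^X_t\le t+(\cI^X_t+1)$, and the same argument applies verbatim to $Y$, giving the second inequality in \eqref{laps-ub-eq}.

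For \eqref{laps-lb-eq} I would condition on $\cS=\{\tau^Y=\oo\}$ and exploit the following geometric feature of the simple exploration. Each time $Y$ discovers a new link it moves onto a \emph{fresh} copy of $S^1$, i.e.\ one whose $\NN$-coordinate has never been used before, and --- this being the whole point of the simple exploration --- on that copy $Y$ can never jump to its history. Consequently, if the next link-discovery happens at least one time-unit later, then $Y$ performs a complete revolution of the fresh circle and backtracks out of it exactly one time-unit after entering: a backtrack onto a circle can only occur at that circle's unique entry point, and since the $\NN$-coordinate is fresh the entry point is reached again for the first time only after a full revolution, so $Y$ cannot leave the circle any sooner; and during that revolution $Y$ passes through $0\in S^1$ exactly once. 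Let $s_1<s_2<\dots$ be the jump times of the rate-$\b$ Poisson process $N'=\cJ^Y$; on $\cS$ these are precisely the discovery times of $Y$. The observation above gives, on $\cS$,
\[
\cK^Y_t\ \ge\ \hat K_t\ :=\ \#\bigl\{\,i\ \colon\ s_i\le t-1,\ \ s_{i+1}-s_i\ge 1\,\bigr\},
\]
because for the qualifying indices $i$ the complete revolutions are finished by time $t$ and contribute crossings of $0$ located in the intervals $(s_i,s_i+1)$, which are pairwise disjoint (if $i<j$ both qualify then $s_j\ge s_{i+1}\ge s_i+1$).

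It then remains to bound $\EE_\b[\hat K_t\,\one_\cS]=\EE_\b[\hat K_t]-\EE_\b[\hat K_t\,\one_{\cS^c}]$ from below. A first-moment computation for the Poisson process $N'$ (Mecke's formula, together with $\PP(N'\text{ has no point in an interval of length }1)=e^{-\b}$) gives $\EE_\b[\hat K_t]=\b e^{-\b}(t-1)$ for $t\ge1$. For the subtracted term I would use that on $\cS^c$ we have $\tau^Y<\oo$ with exponential tails (Proposition~\ref{fact:loop_closure}): the jumps of $N'$ occurring before $\tau^Y$ contribute at most $N'_{\tau^Y}$, which has bounded expectation on $\cS^c$, while by the strong Markov property of $N'$ at $\tau^Y$ the subsequent jumps of $N'$ contribute at most $\b e^{-\b}(t-1)(1-z)$ in expectation; altogether $\EE_\b[\hat K_t\,\one_{\cS^c}]\le \b e^{-\b}(1-z)\,t+C'$ for some constant $C'$. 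Combining, $\EE_\b[\cK^Y_t\,\one_\cS]\ge \b e^{-\b}z\,t-C''$, and dividing by $\PP_\b(\cS)=z$ (Proposition~\ref{fact:loop_closure}) yields $\EE_\b[\cK^Y_t\mid\cS]\ge \b e^{-\b}t-C''/z$, which is at least $ct$ for all $t\ge t_0$, e.g.\ with $c=\tfrac12\b e^{-\b}$.

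The step I expect to be the main obstacle is the geometric claim underpinning $\cK^Y_t\ge\hat K_t$: one must check carefully, directly from Definition~\ref{def:simpleexplorationprocess}, that a fresh $\NN$-coordinate is assigned at every discovery and that $Y$ never jumps to its history, so that a discovery followed by a gap of length $\ge1$ really does force a complete loop of the new circle --- hence a crossing of $0$ --- before $Y$ can leave it. The remaining ingredients --- the Mecke computation for $\hat K_t$ and the truncation at $\tau^Y$ used to handle $\cS^c$ --- are routine given Proposition~\ref{fact:loop_closure}.
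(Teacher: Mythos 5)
Your argument is correct, and for the substantive inequality \eqref{laps-lb-eq} it takes a genuinely different route from the paper. For \eqref{laps-ub-eq} you decompose $[0,t]$ by traversal times into $\cI_t+1$ arcs of total length $t$ and sum the elementary bound that an arc of length $\delta$ on the unit circle meets $0$ at most $\delta+1$ times; the paper instead looks at successive visit times $\tau_i$ to level $0$ and notes that $\cI_{\tau_{i+1}}-\cI_{\tau_i}=0$ forces $\tau_{i+1}-\tau_i=1$. Both are short counting arguments and the estimate is the same. For \eqref{laps-lb-eq} the paper lower-bounds $\cK^Y_t$ by the number $\cR^Y_t$ of backtracked \emph{return links}, views this as a renewal--reward functional over the frontier times $\ell_k$ of Proposition~\ref{fact:loop_closure}, and concludes via the renewal--reward theorem together with a direct verification that $\EE[R_1\mid\cS]>0$. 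You instead lower-bound $\cK^Y_t$ by the number $\hat K_t$ of discovery times $s_i$ followed by a gap $\geq 1$ (so that $Y$ is forced to make a full revolution of the fresh circle before it can backtrack), compute $\EE_\b[\hat K_t]=\b e^{-\b}(t-1)$ directly by the Mecke/Palm formula for the rate-$\b$ Poisson process $N'$, and then subtract the contribution of $\cS^c$ using the exponential tails of $\tau^Y$ and the strong Markov property of $N'$ at $\tau^Y$. Your geometric claim is sound: in $(s_i,s_i+1)$ there are no further discoveries, and since the $\NN$-coordinate is fresh the only admissible backtrack time $s$ for the circle $\{(k,w)\}$ in Definition~\ref{def:simpleexplorationprocess} is $s=s_i$, so $Y$ can only exit by returning to $\varphi_{s_i}$ after one full revolution, yielding exactly one crossing of $0$ in that interval. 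Your approach avoids the renewal--reward theorem entirely, giving a more elementary proof together with an explicit constant $c=\tfrac12\b e^{-\b}$; the paper's approach is more structural and reuses the frontier-time renewal machinery already set up in Proposition~\ref{fact:loop_closure}. One small point worth noting when you write out the bound $\EE_\b[N'_{\tau^Y}\one_{\cS^c}]<\infty$: on $\cS^c$ one has $Z_{\tau^Y}=-1$, i.e.\ $N'_{\tau^Y}=\tau^Y-1$, so this expectation is just $\EE[(\tau^Y-1)\one_{\cS^c}]$, which is finite by \eqref{eq:tauYtails}; this is cleaner than invoking Wald-type estimates.
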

\begin{proof}
The proofs of \eqref{laps-ub-eq} 
for $X$ and $Y$ are the same. 
We write 
$\lapProcess_{t}$ and $\cI_{t}$, omitting $X$, $Y$, $v$ 
and $\ph$ in order to simplify the notation. 

Define a sequence
of times $\tau_{0}:=0$ and, for $i\geq1$,
\[ 
\tau_{i}:=\inf\left\{ t > \tau_{i-1}:\varphi_t=0\right\} .
\]
We first claim that for $i\geq 1$ we have 
\be\label{I-and-tau}
\mbox{if} \quad \cI_{\tau_{i+1}}-\cI_{\tau_i}=0
\quad\mbox{then}\quad
\tau_{i+1}-\tau_i=1.
\ee
Indeed, at time $\tau_i$ the exploration passed $0\in S^1$ and if it
passes 0 again without traversing a link this means that it has
completed a full lap on one copy of $S^1$.
From \eqref{I-and-tau} we deduce that
\[
(\tau_{i+1}-\tau_i)+(\cI_{\tau_{i+1}}-\cI_{\tau_i})\geq 1,
\mbox{ for all } i\geq1.
\]
Let $k$ be such that $t\in[\tau_{k},\tau_{k+1})$.
Then
\[
\lapProcess_{t}-1=\lapProcess_{\tau_{k}}-1=k-1
\leq\sum_{i=0}^{k-1}\big[(\tau_{i+1}-\tau_{i})+
(\cI_{\tau_{i+1}}-\cI_{\tau_{i}})\big]
=\tau_{k}+\cI_{\tau_{k}}\leq t+\cI_{t},
\]
as claimed.

Now we turn to \eqref{laps-lb-eq}. 
Recall from Proposition \ref{fact:loop_closure}, and the discussion
preceding it, the notation $\D_k$ and $Z^{(k)}$
 as well as the
notion of frontier times.  
Let us use the term \emph{return times} for the jump times of $Z$
which are not frontier times, and \emph{return links} for the
corresponding links traversed by $Y$.
Observe that $\cK^Y_t\geq \cR^Y_t$,
where $\cR^Y_t$ denotes
the number of return links which have been backtracked by $Y$ up to time $t$.
This is because $Y$ does not visit its own history other than by
backtracking, hence between discovering a return link and backtracking
it $Y$ must complete at least one circle. 

Let $R_k$ denote the total number of return times of
$(Z_t)_{\ell_k\leq t\leq \ell_{k+1}}$.  By Proposition
\ref{fact:loop_closure}, conditionally on $\cS$ the sequence 
$\{(\D_k,R_k)\}_{k\geq1}$ is a renewal-reward process, and by the
basic renewal-reward theorem \cite[Theorem 10.5]{gri-sti}
it thus follows that
\[
\frac{\EE[\cK^Y_t\mid \cS]}{t}\geq 
\frac{\EE[\cR^Y_t\mid \cS]}{t}\to 
\frac{\EE[R_1\mid \cS]}{\EE[\D_1\mid \cS]},
\quad\mbox{ as } t\to\oo.
\]
The result \eqref{laps-lb-eq} 
follows from $\EE[R_1\mid \cS]>0$, which is easily
checked.  
For example, it suffices to check that 
$\PP(R_1=1\mid \cS)>0$.   Letting
$\s_1,\s_2,\dotsc$ denote the jump times of $Z$ and using that 
$\PP(R_1=1\mid \cS)=\PP(\ell_1=\s_2\mid\cS)$ we get
\[\begin{split}
\PP(R_1=1\mid \cS)&=\frac{\PP(\s_1<1,\s_2-\s_1\in(1,2-\s_1),
(Y_{\s_2+t}-Y_{\s_2})_{t\geq0}\in\cS)}{\PP(\cS)}\\
&=\PP(\s_1<1,\s_2-\s_1\in(1,2-\s_1))
=e^{-2\b}(e^\b-1-\b)>0.
\end{split}\]
\end{proof}

We now come to the key technical result of the paper, an upper bound
on $\cL^X$ when $X$ explores part of a large cycle.
At the same time we also provide a lower bound on $\cK^X$
since the proof follows a similar structure.

\begin{proposition}[Winding for the exploration process is small]
\label{prop:goodBalanceForExploration}
Let $\beta_{1}>\beta_{0}>1$, and consider the exploration $X$ started
at an arbitrary point $(v,\ph,d)$.
There exist $C_{1},c_{1}>0$ such that for any $\beta\in[\beta_{0},\beta_{1}]$
we have 

\[
\PP_{\beta}\left(|\signExploration_{n^{1/3}}^{X}|
\one_{\{\tau^{X}\geq n^{1/3}\}}\geq 3n^{1/4}\log n\right)
\leq C_{1}e^{-c_{1}\log^{2}n}.\label{eq:windingProcessUpperbound}
\]
Moreover, we have for some $c_{2}>0$
\[
\PP_{\beta}\left(\left\{ \mathcal{K}_{n^{1/3}}^{X}<c_{2}n^{1/3}\right\} \cap\left\{ \tau^{X}\geq n^{1/3}\right\} \right)\leq C_{1}e^{-c_{1}\log^{2}n}.\label{eq:lapProcessLowerBound}
\]
\end{proposition}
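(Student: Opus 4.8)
The plan is to deduce both estimates of Proposition~\ref{prop:goodBalanceForExploration} from the corresponding facts about the simple exploration $Y$, transferred through the coupling of Lemma~\ref{fact:coupling}. Since $X$ is started at an arbitrary point, I first apply the coupling with $\sigma=0$ (or, more precisely, observe that on the event $\cA^\his_0$, which is trivial, the coupling applies verbatim; the point $(v,\ph,d)$ is treated as the starting configuration). Take $T=n^{1/3}$. By Lemma~\ref{fact:coupling}(3), the coupling of $\tilde X$ (equal in law to $X$) with a simple exploration $Y$ succeeds, meaning $\tilde X_t=Y_t'$ for all $t<\tau^Y\wedge T$, with probability at least $1-4\b T(\cJ^X_0+\b T)/n$. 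Since $\cJ^X_0=1$ and $T=n^{1/3}$, this failure probability is $O(n^{2/3}/n)=O(n^{-1/3})$ --- good enough to be absorbed, but we want the stronger $e^{-c\log^2 n}$ bound, so a little care is needed: actually $O(n^{-1/3})$ is \emph{not} smaller than $Ce^{-c\log^2 n}$ eventually, so I must instead control $\cJ^X_0$ trivially and note the bound $4\b T(\cJ^X_0+\b T)/n = O(n^{-1/3})$ is already of smaller order than any fixed polynomial --- wait, that is the opposite. Let me reconsider: $e^{-c\log^2 n}$ decays \emph{faster} than any polynomial, so $O(n^{-1/3})$ is \emph{larger}. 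Hence the coupling failure probability alone is too large; the resolution must be that on the event $\{\tau^X\geq n^{1/3}\}$ we also have $\cJ^X$ controlled, but more importantly we should run the coupling and separately argue that failure of the coupling is compatible with $|\cL^X|$ being large only with small probability. This is the crux, addressed below.

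Here is the cleaner route I would take. Condition on $\cS^c=\{\tau^Y<\oo\}$ versus $\cS=\{\tau^Y=\oo\}$. On $\cS^c$, by Proposition~\ref{fact:loop_closure}(2) we have $\PP(\cS^c\cap\{\tau^Y\geq n^{1/3}\})\leq Ce^{-cn^{1/3}}$, which is much smaller than $Ce^{-c\log^2 n}$; and if $\tau^Y<n^{1/3}$ while the coupling holds, then $X$ has closed before time $n^{1/3}$, so the indicator $\one_{\{\tau^X\geq n^{1/3}\}}$ vanishes (modulo coupling failure). On $\cS$, Proposition~\ref{prop:windingconcentration} gives, with $T=n^{1/3}$ and $s=3\log n$ (which lies in $[1,T^{1/2}]=[1,n^{1/6}]$ for large $n$), that $\PP_\b(\sup_{t\leq n^{1/3}}|\cL^Y_t|\geq 3\log n\cdot n^{1/6}\mid \cS)\leq Ce^{-c\log^2 n}$; note $n^{1/6}\cdot 3\log n = 3n^{1/6}\log n$, which is smaller than the target $3n^{1/4}\log n$, so there is room to spare. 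On the coupling-success event, $\cL^X_t=\cL^Y_t$ for $t<\tau^Y\wedge n^{1/3}$, and if additionally $\tau^X\geq n^{1/3}$ then (by the coupling) $\tau^Y\geq n^{1/3}$ too, so $|\cL^X_{n^{1/3}}|=|\cL^Y_{n^{1/3}}|$, which is $<3n^{1/6}\log n<3n^{1/4}\log n$ with the stated probability. The remaining contribution is the coupling-failure probability; this is where the main obstacle lies.

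To handle coupling failure I would use a stopping-time bootstrap: rather than running the single coupling of Lemma~\ref{fact:coupling} over the whole interval $[0,n^{1/3}]$, restart it at successive stopping times $\sigma_0=0<\sigma_1<\dots$, where each $\sigma_{i+1}$ is the first time after $\sigma_i$ that the coupling fails (i.e.\ $X$ jumps into its history). Between restarts, $X$ agrees with a fresh simple exploration. The number of restarts needed to cover $[0,n^{1/3}]$ is controlled because each failure requires $X$ to jump to a previously visited vertex, an event whose intensity is bounded by $\tfrac{\beta}{n-1}\cN^X_t\leq \tfrac{\beta}{n-1}(\cJ^X_t+1)$; on $\{\tau^X\geq n^{1/3}\}$ and the event $\cA^\his_{n^{1/3}}$ (which holds with probability $\geq 1-e^{-cn}$ by \eqref{Ahis-bound}), $\cJ^X_{n^{1/3}}\leq N_{n^{1/3}}$ is at most, say, $n^{1/3}\log n$ with overwhelming probability, so the expected number of history-jumps in $[0,n^{1/3}]$ is $O(n^{1/3}\cdot n^{1/3}\log n/n)=O(n^{-1/3}\log n)$, and the probability of even one such jump is $O(n^{-1/3}\log n)$ --- still a polynomial, still too weak. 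The genuine fix, which I expect to be the technical heart, is that each restart contributes only a \emph{bounded} increment to $\cL^X$ before the next link is discovered (the winding between consecutive links is $O(1)$ in expectation with exponential tails, by the same renewal structure as in Proposition~\ref{fact:loop_closure}), so even conditioning on a few coupling failures, $|\cL^X_{n^{1/3}}|$ is a sum of $O(n^{1/3})$ increments each with exponential tails plus at most a handful of ``bad'' increments each also with exponential tails, and a union bound over the (with high probability $O(\log n)$) restart times together with a Bernstein/Etemadi estimate exactly as in the proof of Proposition~\ref{prop:windingconcentration} yields the $Ce^{-c\log^2 n}$ bound. For the lower bound on $\cK^X$ one argues symmetrically: $\cK^X_{n^{1/3}}\geq \cK^Y_{n^{1/3}}$ on coupling success (the number of level-$0$ crossings can only be reduced by $X$ revisiting history, which is rare), and Proposition~\ref{fact:lapProcessUpperBound} gives $\EE[\cK^Y_{n^{1/3}}\mid\cS]\geq cn^{1/3}$; combined with a concentration estimate for $\cK^Y$ --- again via the renewal-reward structure, writing $\cK^Y$ as a sum of i.i.d.\ bounded-tail increments over frontier intervals and applying a Chernoff bound for the lower tail --- one gets $\PP(\cK^Y_{n^{1/3}}<c_2 n^{1/3}\mid\cS)\leq Ce^{-cn^{1/3}}$, hence $\leq Ce^{-c\log^2 n}$, and transferring through the coupling completes the proof. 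The main obstacle, to summarize, is making the coupling-failure contribution to the winding genuinely smaller than every polynomial rather than merely $O(n^{-1/3})$; the resolution is to iterate the coupling at stopping times and bound the winding accumulated during the $O(\log n)$ (with high probability) failure excursions using the exponential tails of per-link winding increments.
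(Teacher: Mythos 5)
Your diagnosis of the central obstacle is exactly right: applying the coupling of Lemma~\ref{fact:coupling} once over the full interval $[0,n^{1/3}]$ gives a failure probability of order $n^{-1/3}$, which is polynomially small but far larger than $e^{-c\log^2 n}$. However, your proposed fix --- restarting the coupling at the (random) failure times and appealing to ``exponential tails of per-link winding increments'' --- does not close the gap, for two reasons. First, once the coupling has failed, $X$ is exploring vertices whose $S^1$-circle it has already partially visited, and at that point the orientation symmetry that drives the $\sqrt T$-scaling in Proposition~\ref{prop:windingconcentration} (symmetry of the increments $\cQ_k$ between bar-crossings) is broken: a revisit forces an orientation inherited from the history rather than a fresh $\pm$-symmetric flip. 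So the winding accumulated during a failure excursion is \emph{not} a sum of symmetric, exponential-tailed increments, and the only a~priori bound is the trivial one, $|\cL^X|$ changes by at most $1$ per unit time. Second, even granting a good-tail bound on each excursion's winding, the event ``at least one failure occurs'' has probability $\sim n^{-1/3}\log n$, and on that event your argument does not produce an $e^{-c\log^2 n}$ control on the accumulated winding; you would need the failure probability itself to be sub-polynomial, which it isn't.

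The paper's resolution is structurally different in a way that matters. It performs a \emph{deterministic} subdivision of $[0,n^{1/3}]$ into $\lfloor n^{1/6}\rfloor$ intervals $[t_i,t_{i+1})$ of length $b_n\approx n^{1/6}$, not a random subdivision at failure times. Within each short interval it repeatedly attempts a coupling (waiting for $X$ to jump to a new vertex, then starting a fresh simple exploration) until one survives indefinitely; the number $k_0$ of attempts is geometric, and via Corollary~\ref{cor:visits-intensity} the waiting time for the next new-vertex jump has exponential tails, so the successful coupling starts with only a $\log^2 n$-length burn-in. An interval is ``bad'' if some attempted coupling within it failed. The crucial points are (i) the winding contribution of a bad interval is bounded \emph{trivially} by $b_n$ (no tail estimate needed or available), and (ii) the indicators $\one_{\cB_i}$ form a martingale-difference sequence with conditional probabilities $p_{n,i}=O(n^{-1/4})$, so Azuma gives $\PP\big(\sum_i\one_{\cB_i}\geq n^{1/12}\log n\big)\leq Ce^{-c\log^2 n}$. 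Thus the bad-interval contribution is $\lesssim n^{1/12}\log n\cdot b_n\approx n^{1/4}\log n$, while the good-interval contribution is $\lesssim n^{1/6}\cdot b_n^{1/2}\log n\approx n^{1/4}\log n$ via Proposition~\ref{prop:windingconcentration}. It is the choice $b_n\approx (n^{1/3})^{1/2}$ that makes both contributions the same order, and it is Azuma over the \emph{fixed} sequence of intervals that produces the $e^{-c\log^2 n}$ tail --- neither of which is available from the random restart scheme you sketch. The same decomposition handles $\cK^X$: on good intervals $\cK^X_{t_{i+1}}-\cK^X_{t_i}\geq\cK^{Y_i}_{\cdot}$, on bad ones one uses the trivial bound $\geq 0$, and concentration of the good-interval sum comes from i.i.d.\ copies of $\cK^Y$ over the intervals (not from a single application of renewal-reward over the full $[0,n^{1/3}]$, which again would be ruined by the polynomially-likely coupling failure). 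So, concretely: the gap in your proposal is that the winding accrued after a coupling failure has no useful tail bound and cannot be treated as a symmetric increment, and the random-restart bookkeeping does not yield sub-polynomial error; the paper sidesteps both by a deterministic time-subdivision of length $b_n=T^{1/2}$, a trivial per-interval bound on the bad contributions, and Azuma on the number of bad intervals.
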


Before giving the proof we outline the main ideas.  We want to use the
coupling of the exploration process $X$ to the simple exploration 
process $Y$ from Lemma \ref{fact:coupling}, 
as well as the
concentration result for the latter process, 
Proposition \ref{prop:windingconcentration}.
To get good concentration we will decompose $[0,n^{1/3}]$
into many shorter time intervals $[t_i,t_{i+1})$ of length
approximately $n^{1/6}$ each. On each $[t_i,t_{i+1})$ we will wait
for a `good' coupling with a simple exploration:  
first we wait until the exploration $X$ jumps
to a new vertex so we can start a coupling, then we check if the
simple exploration survives indefinitely, 
which it does with probability $z>0$.  
If so, we can apply Proposition \ref{prop:windingconcentration} in
this interval.
If not, then we repeat the procedure, waiting for a jump of $X$ to a
new vertex and looking at the coupled simple exploration. Typically we
only need to perform this a small number of times until we get a coupling
with a simple exploration which survives.

Let us make these ideas formal and introduce the setup that will be used in the proof. Set $a_{n}:=n^{1/3}$.
We define $t_{i}:=i\cdot b_{n},$ where
$i\in\left\{ 0,1,\ldots,\lfloor n^{1/6}\rfloor\right\} $, 
$b_{n}:=a_{n}/\lfloor n^{1/6}\rfloor$.
Writing $m=\floor{n^{1/6}}-1$,  we decompose 
\[
\signExploration_{a_{n}}^{X}
=\sum_{i=0}^{m}(\signExploration_{t_{i+1}}^{X}-\signExploration_{t_{i}}^{X}).
\]
Fix $i\in\left\{ 0,1,\ldots,m\right\}$.  Let us 
first analyse the change of the winding process on one interval
$[t_{i},t_{i+1})$.  To this end we will define two 
sequences of times
$\left\{ \sigma_{k}^{i}\right\} _{k=0}^{+\infty}$ and 
$\left\{ \tau_{k}^{i}\right\} _{k=0}^{+\infty}$ as well as a sequence
of simple explorations $\{Y^i_k\}_{k=1}^\oo$.
The $\s^i_k$ will form a non-decreasing sequence, taking values in
$[t_i,t_{i+1}]$, 
and will be defined so that, for $k\geq1$ and as long as $\s^i_k<t_{i+1}$,
the process $X$ jumps to a new vertex at time $\s^i_k$.
For such $k$, the process $Y^i_k$ is defined to be
an independent copy of  a simple exploration,  coupled
with $X$ as in Lemma \ref{fact:coupling},
 starting at time $\s^i_k$.
The possibility $\s^i_k=t_{i+1}$ signifies that we have finished 
with the interval $[t_i,t_{i+1})$ and must move on to the
next one.

  We now define the times $\s^i_k$ and $\tau^i_k$.
First we set
$\sigma_{0}^{i}:=t_{i},\tau_{0}^{i}:=0$.  Next,
for $k= 1,2,\ldots $ we set 
\[
\sigma_{k}^{i}:=t_{i+1}\wedge 
\inf\left\{ s\geq\sigma_{k-1}^{i}+\tau_{k-1}^{i}:
X\text{ jumps to a new vertex at time }s\right\}  
\]
where $\tau_{k}^{i}:=\tau^{Y^i_k}$ is the time when $Y^i_k$ terminates
(returns to its starting point).
Note that if $\s^i_{k-1}=t_{i+1}$ then 
$\s^i_k=\s^i_{k+1}=\dotsc=t_{i+1}$.   In particular, this will 
occur if  $\tau^i_{k-1}=\oo$. 
In this case we do not need to define
$Y^i_k,Y^i_{k+1},\dotsc$
Also note that, since the coupling of $Y^i_k$ with $X$ entails
constructing both processes using the same sources of randomness, we
may work with the $\tau^i_k$ as if they are adapted to the filtration
of $X$, even though they are defined in terms of $Y$.

In words, these definitions mean that, firstly,
$Y^i_1$ is a simple exploration coupled with $X$,
started at time $\sigma^i_1$, the first time in $[t_i,t_{i+1})$ that
$X$ jumps to a new vertex.  This coupling is then run either for the
remaining time in $[t_i,t_{i+1})$, or until $Y^i_1$ returns to its starting
point (after time $\tau_{1}^{i}$).  For $k\geq2$,
if the simple exploration
$Y^i_{k-1}$ has returned to its starting point, at time
$\s^i_{k-1}+\tau^i_{k-1}\in [t_i,t_{i+1})$, 
then we wait
until $X$ jumps to a new vertex again.  We call the
time when this occurs $\sigma_k^i$ and we begin a new coupling with a
simple exploration, $Y^i_k$, from the location of $X$ at this time.

Let 
\[
k_0=k_{0}^{i}:=\min\left\{ k\in\mathbb{N}:\tau_{k}^{i}=+\infty
\text{ or }\sigma_{k+1}^{i}= t_{i+1}\right\}.
\]
The first possibility,
$\tau^i_{k_0}=+\oo$, means that at attempt number 
$k_{0}$ the coupled
simple exploration $Y^i_{k_{0}}$ survives (and is the first one with this property).
The other possibility, that $\tau^i_{k_0}<+\oo$ but 
$\sigma_{k_{0}+1}^{i}= t_{i+1}$, means that after time $\s_{k_0}$ the
exploration  $X$ never jumps to a new vertex until the end of the
interval $[t_i,t_{i+1})$.  Included in this possibility is the case
when $X$  closes the loop before jumping again.
Intuitively, $k_0$ is the number of attempts at coupling $X$ with a
simple exploration which survives, until we either succeed or run out
of time.

We now turn to the proof of the proposition.

\begin{proof}[Proof of Proposition
  \ref{prop:goodBalanceForExploration}]
Fix $\b\in[\b_0,\b_1]$.
We first show \eqref{eq:windingProcessUpperbound}
for this $\b$.
Recall the definition of the event $\cA^\his_{t}$ given above \eqref{Ahis-bound}.
First note that it suffices to show that
\be
\PP_{\beta}\left(\cA^\his_{a_n}\cap \big\{|\signExploration_{a_n}^{X}|
\one_{\{\tau^{X}\geq a_n\}}\geq 3n^{1/4}\log n\big\}\right)
\ee
satisfies the claimed bound, due to \eqref{Ahis-bound}.  
Also note that $\cA^\his_{a_n}\se \cA^\his_s$
for $s\leq a_n$.

Consider the interval $[t_i, t_{i+1})$, the stopping times $\s_k^i, \tau_k^i$ and the variable $k_0^i$ from the preceding discussion. Keeping $i$ fixed for now, we will drop it from the superscript on
$\s_k$, $\tau_k$ and $k_0$.
We claim that, under $\PP(\cdot|\mathcal{F}_{t_{i}})$,
the random variable $k_{0}$ is stochastically dominated by a geometric
distribution with parameter $z$, that is to say,
\be\label{k0-tails}
\mbox{for all } k\geq 1, \quad
\PP(k_0\geq k\mid \cF_{t_i})\leq (1-z)^{k-1}.
\ee
Here $z>0$ is the survival probability of a simple exploration, 
see Proposition \ref{fact:loop_closure}. 
The claim is easily established by induction, using 
\[
\PP(k_0\geq k+1\mid \cF_{t_i})=
\PP(k_0\geq k+1\mid \cF_{t_i},\{k_0\geq k\})
 \PP(k_0\geq k\mid \cF_{t_i})
\]
and
\[
\PP(k_0\geq k+1\mid \cF_{t_i},\{k_0\geq k\})
\leq \PP(\tau_k<\oo\mid \cF_{t_i},\{k_0\geq k\})=1-z.
\]
 
We now show that there exist constants $C_{2},c_{2}>0$,
uniform in $n$ and in $i$, such that for all $t>0$,
on the event $\cA^\his_{a_n}\cap\{\tau^X\geq a_n\}$ we have
\begin{equation}\label{eq:tmp17}
\PP(\sigma_{k_{0}} - t_{i} \geq t
\mid \mathcal{F}_{t_{i}}) \leq C_{2}e^{-c_{2}t}.
\end{equation}
First we establish that there are $C_1,c_1>0$ such that
for any $k\geq 0$ and any $t>0$, on $\cA^\his_{a_n}\cap\{\tau^X\geq a_n\}$ we have
\begin{equation}\label{eq:sigma-intemediate}
\PP\left((\s_{k+1}-\s_{k})\one_{\{k_0>k\}}\geq t \mid \mathcal{F}_{\sigma_{k}}\right) 
=\PP(k_0>k,\s_{k+1}-\s_{k}\geq t \mid \mathcal{F}_{\sigma_{k}}) 
\leq C_{1}e^{-c_{1}t}.
\end{equation}
Indeed, for any $k < k_0$ we have $\tau_{k}< \infty$, so 
\[
\begin{aligned}
& \PP \left(k_0>k, \s_{k+1} - \s_{k}\geq t\mid \mathcal{F}_{\s_{k}}\right)  \\
& \leq \PP\left( \{ \s_{k+1} - \s_{k}\geq t \} \cap 
\{ \tau_{k} \leq t/2 \} \mid \mathcal{F}_{\s_{k}}\right) + 
\PP\left( \tau_{k}> t/2, \, \tau_{k} < +\infty \mid 
\mathcal{F}_{\s_{k}}\right).
\end{aligned}
\]
The second term is at most $C e^{-ct}$, for some $C,c > 0$, by
\eqref{eq:tauYtails} from Proposition \ref{fact:loop_closure}. To
estimate the first term, note that $\tau_{k} \leq t/2$
together with $\sigma_{k+1} - \sigma_{k}\geq t$
implies that $\s_{k+1} - (\s_{k} +\tau_{k})\geq t/2$, in particular $X$ does not visit previously unexplored vertices for time at least $t/2$ after $\s_{k} +\tau_{k}$. Thus
\begin{multline}\label{eq:needCor16}
\PP\left( \{ \s_{k+1} - \s_{k}\geq t \} \cap 
\{ \tau_{k} \leq t/2 \} \mid \mathcal{F}_{\s_{k}}\right)
\one_{\cA^\his_{a_n}} \one_{\{\tau^X\geq a_n\}}
\\
\leq \EE\big[
\PP\big( \{ \s_{k+1}\wedge\tau^X - (\s_{k}+\tau_k) \geq t/2 \} \cap 
\cA^\his_{\s_k+\tau_k} \mid \mathcal{F}_{\s_{k}+\tau_k}\big)\mid \cF_{\s_k}\big].
\end{multline}
By Corollary \ref{cor:visits-intensity} 
the probability is at most $e^{-c't}$ for some 
$c' > 0$,
which together with the previous estimate proves
\eqref{eq:sigma-intemediate}.

Now note that
\[
\s_{k_{0}} - t_{i} = \s_{k_0}-\s_0=
\sum_{j=0}^{k_0 -1}  \left( \s_{j+1} - \s_{j} \right),
\]
where by \eqref{eq:sigma-intemediate} each summand, conditionally on
all previous terms, has exponential tails.
Since $k_0$, the number of summands, is by \eqref{k0-tails} itself
dominated by a geometric random variable, one may conclude that the
sum itself has exponential tails,
as claimed in \eqref{eq:tmp17}.  In more detail,
we have for any $k>0$ that
\[
\PP(\s_{k_0}-\s_0\geq t\mid \cF_{t_i})\leq
\PP\Big(\sum_{j=0}^k (\s_{j+1}-\s_j)\one_{\{k_0>j\}}
\geq t \,\Big|\, \cF_{t_i}\Big)+\PP(k_0>k\mid \cF_{t_i}).
\]
Now for any $\theta>0$ we have
\[
\PP\Big(\sum_{j=0}^k (\s_{j+1}-\s_j)\one_{\{k_0>j\}}
\geq t \,\Big|\, \cF_{t_i}\Big)
\leq e^{-\theta t} \EE\Big[\exp\Big(\theta
\sum_{j=0}^k (\s_{j+1}-\s_j)\one_{\{k_0>j\}}\Big)
\,\Big|\, \cF_{t_i} \Big],
\]
where (using that $\{k_0>k-1\}\in\cF_{\s_k}$)
\begin{multline*}
\EE\Big[\exp\Big(\theta
\sum_{j=0}^k (\s_{j+1}-\s_j)\one_{\{k_0>j\}}\Big)
\,\Big|\, \cF_{t_i} \Big]\\=
\EE\Big[\exp\Big(\theta
\sum_{j=0}^{k-1} (\s_{j+1}-\s_j)\one_{\{k_0>j\}}\Big)
\EE\big[e^{\theta  (\s_{k+1}-\s_k)\one_{\{k_0>k\}}}
\mid \cF_{\s_k} \big]
\,\Big|\, \cF_{t_i} \Big].
\end{multline*}
Here the inner factor may be written as
\[
\EE\big[e^{\theta  (\s_{k+1}-\s_k)\one_{\{k_0>k\}}}
\mid \cF_{\s_k} \big]
=\int_0^\oo \PP\big(
e^{\theta  (\s_{k+1}-\s_k)\one_{\{k_0>k\}}}>s
\mid \cF_{\s_k}\big) ds.
\]
Using \eqref{eq:sigma-intemediate} we conclude that 
we may choose $\theta>0$ (depending on constants $c_1$, $C_1$ in \eqref{eq:sigma-intemediate}) such that, 
on $\cA^\his_{a_n}\cap {\{\tau^X\geq a_n\}}$,
\[
\EE\big[e^{\theta  (\s_{k+1}-\s_k)\one_{\{k_0>k\}}}
\mid \cF_{\s_k} \big]\leq e,
\]
say,  for all $k\geq 0$.  It follows by induction that
\[
\EE\Big[\exp\Big(\theta
\sum_{j=0}^k (\s_{j+1}-\s_j)\one_{\{k_0>j\}}\Big)
\,\Big|\, \cF_{t_i} \Big]
\one_{\cA^\his_{a_n}} \one_{\{\tau^X\geq a_n\}}
\leq e^k,
\]
and hence
\[
\PP(\s_{k_0}-\s_0\geq t\mid \cF_{t_i})
\one_{\cA^\his_{a_n}} \one_{\{\tau^X\geq a_n\}}
\leq
e^{k-\theta t}+\PP(k_0>k\mid \cF_{t_i}).
\]
Setting $k=\floor{\tfrac\theta2 t}$ and using \eqref{k0-tails},
this gives \eqref{eq:tmp17}.

Recall the notion of a failed coupling from Lemma
\ref{fact:coupling}. The bound \eqref{eq:tmp17} tells us that typically we don't wait too long for a coupling with a simple exploration process that survives. If the coupling doesn't fail, we will be able to transfer estimates of the winding process from the simple exploration to the process $X$.

To this end we distinguish three possible scenarios of what can happen during a given time interval. We say that the interval $[t_{i},t_{i+1})$ is
\emph{good}, denoting this event by $\mathcal{G}_{i}$, if the
following hold:
\begin{itemize}[leftmargin=*]
\item $\tau^i_{k_{0}}=+\oo$, and
\item none of the $k_0$ attempted couplings failed until time
$T=t_{i+1}-t_{i}\leq n^{1/6}$.
\end{itemize}
On the event $\mathcal{G}_{i}$ the coupling started at time
$\s^i_{k_0}$ survives and it lasts until time $t_{i+1}$, in particular
$X$ cannot close its loop before time $t_{i+1}$ (as this would entail
$X$ returning to some vertex visited before time $\s^i_{k_0}$ and
hence $Y^{k_{0}}$ returning to its starting point, i.e.\ $\tau^{Y^{k_0}}<\oo$).
Thus $\cG_i\se \{ \tau^{X}\geq t_{i+1}\}$. 
Next, we say that the interval $[t_i,t_{i+1})$ is \emph{terminal} 
if $\tau^i_{k_{0}}<+\oo$ and
$\sigma_{k_{0}+1}^{i}=t_{i+1}$, and we denote this
event by $\mathcal{T}_{i}$.  Note that 
$\{ \tau^{X}< t_{i+1}\} \subseteq \cT_i$.
Finally we let $\cB_i=(\cG_i\cup\cT_i)^c$, and if this event occurs we
say that the interval $[t_i,t_{i+1})$ 
is \emph{bad}.
On this event one of the attempted couplings failed.

Let us now estimate the winding process on each of the above events.  We have
\begin{align}\label{eq:winding-process-GBT}
|\signExploration_{a_{n}}^{X}|\one_{\{\tau^{X}\geq a_n\}} \nonumber  
&\leq\sum_{i=0}^{m}
|\signExploration_{t_{i+1}}^{X}-\signExploration_{t_{i}}^{X}|\one_{\{\tau^{X}\geq a_n\}} \\
 & =\sum_{i=0}^m |\signExploration_{t_{i+1}}^{X}-\signExploration_{t_{i}}^{X}|
\big(\one_{\mathcal{G}_{i}}+\one_{\mathcal{B}_{i}}+\one_{\mathcal{T}_{i}}\big)
\one_{\{\tau^{X}\geq a_n\}} \nonumber \\
 & \leq\lfloor n^{1/6}\rfloor\max_{0\leq i\leq m }
|\signExploration_{t_{i+1}}^{X}-\signExploration_{t_{i}}^{X}|\one_{\mathcal{G}_{i}}
+b_{n}\sum_{i=0}^{m}\one_{\mathcal{B}_{i}}
+b_n\sum_{i=0}^{m}\one_{\mathcal{T}_{i}}\one_{\{\tau^{X}\geq a_n\}},
\end{align}

where for the second and third term we used the trivial estimate that \\ $|\signExploration_{t_{i+1}}^{X}-\signExploration_{t_{i}}^{X}| \leq t_{i+1} - t_i = b_n$.

We will now estimate each of the three terms in \eqref{eq:winding-process-GBT} separately. Let us start with the first one. We will estimate $|\signExploration_{t_{i+1}}^{X}-\signExploration_{t_{i}}^{X}|$ on the
event $\mathcal{G}_{i}\cap \cA^\his_{a_n}$. 
Since $\signExploration^{X}$ increases at rate at most $1$, we have the estimate
\be\label{eq:tmp18}
|\signExploration_{t_{i+1}}^{X}-\signExploration_{t_{i}}^{X}|
\leq\sigma_{k_{0}}^{i}-t_{i}+
|\signExploration_{t_{i+1}}^{X}
-\signExploration^X_{\sigma_{k_{0}}^{i}}|
\one_{\{\sigma_{k_{0}}^{i}<t_{i+1}\}}.
\ee
By \eqref{eq:tmp17}, on $\cA^\his_{a_n}$ the first term on the right hand
side in \eqref{eq:tmp18} is at most $\log^2 n$ with probability at
least $1 - C_2 e^{-c_2 \log^2 n}$. 
In the second term we may, on $\cG_i$, replace 
$\signExploration_{t_{i+1}}^{X}-\signExploration^X_{\sigma_{k_{0}}^{i}}$
by $\signExploration^Y_{t_{i+1}-\sigma_{k_{0}}^{i}}$,
where $Y=Y^i_{k_0}$ is the simple exploration started at time 
$\s^i_{k_0}$.
Now we apply Proposition \ref{prop:windingconcentration} with 
$s = \frac{1}{2}\log n$ and $T = b_n$. As
$t_{i+1}-\sigma_{k_{0}}^{i}\leq t_{i+1}-t_i= b_n=n^{1/3}/\floor{n^{1/6}}$, we obtain in particular
\[
\PP\big(\cG_i\cap\big\{|\signExploration^Y_{t_{i+1}-\sigma_{k_{0}}^{i}}|
\geq \tfrac12 b_n^{1/2} \log n\big\}\mid \cF_{\s_{k_0}}
\big)\leq C e^{-c\log^2 n}.
\]
Thus 
\[
\PP\big(
\cA^\his_{a_n}\cap 
\big\{|\signExploration_{t_{i+1}}^{X}-\signExploration_{t_{i}}^{X}|
\one_{\mathcal{G}_{i}}\geq
b_{n}^{1/2}\log n\big\}\big)
\leq C_{3}e^{-c_{3}\log^{2}n},
\]
for some $C_3, c_3 > 0$. 
Furthermore, applying a union bound we obtain that (recalling $m=\floor{n^{1/6}}-1$) 
\[
\PP\Big(\cA^\his_{a_n}\cap 
\Big\{\max_{0\leq i\leq m}
|\signExploration_{t_{i+1}}^{X}-\signExploration_{t_{i}}^{X}|
\one_{\mathcal{G}_{i}}\geq
b_{n}^{1/2}\log n\Big\}\Big) \leq
\floor{n^{1/6}}
C_{3}e^{-c_{3}\log^{2}n}.\label{eq:windingProcessIncreament}
\]

We now move to the second term of \eqref{eq:winding-process-GBT}. 
We need to estimate $\PP(\mathcal{B}_{i}|\mathcal{F}_{t_{i}})$. To
this end notice that by Lemma \ref{fact:coupling} the probability
for any given coupling to fail is bounded above by
\[
4\b b_n (\cJ^X_{a_n}+\b b_n)/n\leq 
4\b a_n (\cJ^X_{a_n}+\b a_n)/n.
\]
Defining
$\cD_0:=\left\{ \mathcal{J}_{a_{n}}^{X}\leq
4\beta  a_{n}\right\}$,
we have, for any $k>0$, that
\[\begin{split}
\PP(\cB_i\mid \cF_{t_i})&\leq
\PP(k_0\geq k\mid \cF_{t_i})+
\PP(\{k_0<k\}\cap\cB_i\cap\cD_0 \mid \cF_{t_i})+
\PP(\cD_0^c\mid \cF_{t_i}) \\
&\leq (1-z)^k+
k\frac{20\b^2 a_n^2}{n} +
\PP(\cD_0^c\mid \cF_{t_i}).
\end{split}\]
Recalling that $a_n=n^{1/3}$ and choosing $k=\floor{n^{1/12}}$
it follows that, for some $C>0$,
\begin{equation}\label{eq:estimateBadInterval}
\PP(\cB_i\mid\cF_{t_i})\leq p_{n,i}:= Cn^{-1/4}+
\PP(\cD_0^c\mid \cF_{t_i}).
\end{equation}
We claim that, on the event 
$\cD_1:=\{\mathcal{J}_{a_{n}}^{X}\leq 2\beta  a_{n}\}$,
we have for large enough $n$ that $p_{n,i}\leq 2Cn^{-1/4}$
with $C$ from \eqref{eq:estimateBadInterval}.
Indeed,
\[\begin{split}
\PP(\cD_0^c\mid \cF_{t_i})&\leq 
\PP(\cJ^X_{t_i}>2\b a_n\mid \cF_{t_i})+
\PP(\cJ^X_{a_n}-\cJ^X_{t_i}>2\b a_n\mid \cF_{t_i}) \\
&=\one\{\cJ^X_{t_i}>2\b a_n\}+
\PP(\cJ^X_{a_n}-\cJ^X_{t_i}>2\b a_n\mid \cF_{t_i}).
\end{split}
\]
On the right-hand-side, the indicator vanishes on $\cD_1$,
and the probability is at most $C_1 e^{-c_1 a_n}$ for some 
$C_1,c_1>0$, since $\cJ^X_t$ is a
counting process with intensity bounded above by $\b$, see Lemma 
\ref{le:intensity} and the argument for  \eqref{Ahis-bound}.  
The claim follows.

Thus, employing \eqref{eq:estimateBadInterval}, we obtain
for large enough $n$ that
\begin{align*}
\PP&\left(\textstyle\sum_{i=0}^{m}\one_{\mathcal{B}_{i}}\geq
  n^{1/12}\log n\right) 
 \leq\PP\left(\Big\{\textstyle\sum_{i=0}^{m}
\one_{\mathcal{B}_{i}}\geq n^{1/12}\log n \Big\} \cap \cD_1\right)
+\PP(\mathcal{D}_1^{c})
\\
 &\qquad \leq\PP\left(\Big\{\textstyle\sum_{i=0}^{m}
(\one_{\mathcal{B}_{i}}-\PP(\mathcal{B}_{i}|\mathcal{F}_{t_{i}}))
\geq n^{1/12}\log n-\sum_{i=0}^m p_{n,i}\Big\}\cap\cD_1\right)
+\PP(\mathcal{D}_1^{c})
\\
 & \qquad\leq\PP\left(\textstyle\sum_{i=0}^m
(\one_{\mathcal{B}_{i}}-\PP(\mathcal{B}_{i}|\mathcal{F}_{t_{i}}))
\geq \frac{1}{2}n^{1/12}\log n \right)+\PP(\mathcal{D}_1^{c}).
\end{align*} 
The sum inside the first
probability is a martingale, with increments
bounded by $1$.  Thus
by the Azuma inequality (see e.g. \cite[Theorem A.10]{Levin-Peres})
we get
\begin{equation}\label{eq:BernoulliLDP}
\PP\left(\textstyle\sum_{i=0}^m
(\one_{\mathcal{B}_{i}}-\PP(\mathcal{B}_{i}|\mathcal{F}_{t_{i}}))
\geq \frac{1}{2}n^{1/12}\log n \right) \leq 2\exp\left(-\frac{n^{1/6}\log^{2}n}{8\floor{n^{1/6}}}\right).
\end{equation}
As before we have that $\PP(\cD_1^c)\leq e^{-c n}$ for some $c>0$.
Taken together, these facts give
\be\label{sumBi}
\PP\left(\textstyle\sum_{i=0}^m
\one_{\mathcal{B}_{i}}
\geq n^{1/12}\log n\right)
\leq C_{4}e^{-c_{4}\log^{2}n}.
\ee
for some $C_{4,}c_{4}>0$.

Finally, let us now consider
$\sum_{i=0}^m \one_{\mathcal{T}_{i}}\one_{\{\tau^{X}\geq a_n\}}$.
Observe that for $i\leq m-1$ 
the event $\mathcal{T}_{i}\cap\left\{ \tau^{X}\geq a_n\right\}$
requires that the exploration neither jumps to an unvisited vertex nor
closes the loop for a time period of at least
$n^{1/6}$.  By a similar application of Corollary \ref{cor:visits-intensity} as for \eqref{eq:needCor16}  
the latter event has
probability smaller than $C_{4}e^{-c_{4}n^{1/6}}$, for some $C_{4},c_{4}>0$. 
We thus have
\[
\begin{aligned}
\PP\left(\textstyle\sum_{i=0}^m
\one_{\mathcal{T}_{i}}\one_{\{\tau^{X}  \geq a_n\}}\geq2\right)
&\leq\PP\left(\textstyle\sum_{i=0}^{m-1}
   \one_{\mathcal{T}_{i}}\one_{\{\tau^{X} \geq a_n\}}\geq1\right)
    \\
&\leq\sum_{i=0}^{m-1}
   \PP\big(\mathcal{T}_{i}\cap\big\{ \tau^{X}\geq a_n\big\} \big)
\leq C_{5}e^{-c_{5}n^{1/6}},\label{eq:probabilityOfTerribleEvent}
\end{aligned}
\]
for some $C_{5},c_{5}>0$. 

From \eqref{eq:winding-process-GBT}, \eqref{eq:windingProcessIncreament}, \eqref{sumBi}
and \eqref{eq:probabilityOfTerribleEvent} we conclude that for some
$C_{6},c_{6}>0$ 
\[
\PP\big(
\cA^\his_{a_n}\cap 
\big\{|\signExploration_{a_{n}}^{X}|\one_{\{\tau^{X}\geq a_n\}}  \geq
\floor{n^{1/6}} b_{n}^{1/2}\log n+b_{n}n^{1/12}\log n+b_n
\big\} \big) \leq 
C_{6}e^{-c_{6}\log^{2}n}.
\]
Since 
$\floor{n^{1/6}} b_{n}^{1/2}\log n+b_{n}n^{1/12}\log n+b_n
\leq 3n^{1/4}\log n$, 
this concludes the proof of \eqref{eq:windingProcessUpperbound}
for a fixed $\b\in[\b_0,\b_1]$.  The
uniformness over such $\b$ follows since the upper bound can be chosen
as a continuous function of $\b$.

Now we turn to \eqref{eq:lapProcessLowerBound}. 
We aim to do a similar decomposition as above,
and as before it suffices to work on the event $\cA^\his_{a_n}$.
For $i\in\left\{ 0,1,\ldots,m\right\}$,
let $Y_i$ be the coupled simple exploration started at
time $\s^i_{k_0}$,
and let $\cS_i$ be the event that $Y_i$ survives.  
On the event $\mathcal{G}_{i}$ we have 
in particular that $\cS_i$ occurs, and we can use
$\cK_{t_{i+1}}^{X}-\cK_{t_{i}}^{X}\geq \cK_{t_{i+1}-\sigma_{k_{0}}^{i}}^{Y_i}$.
On $\cG_{i}^c$ we simply 
use $\cK_{t_{i+1}}^{X}-\cK_{t_{i}}^{X}\geq0$.
Therefore
\[
\cK_{a_{n}}^{X} =
\sum_{i=0}^{m}(\cK_{t_{i+1}}^{X}-\cK_{t_{i}}^{X})
\geq\sum_{i=0}^{m}(\cK_{t_{i+1}}^{X}-\cK_{t_{i}}^{X})
\one_{\cG_{i}}
 \geq\sum_{i=0}^{m}\cK_{t_{i+1}-\s_{k_{0}}^{i}}^{Y_i}
\one_{\cG_{i}}.
\]
Hence 
\[\begin{split}
&\PP(\cA^\his_{a_n}\cap\{\cK_{a_{n}}^{X}< c_2 a_n\}\cap \{\tau^X\geq a_n\})
\\ &\quad\leq 
\PP\Big(\cA^\his_{a_n}\cap\Big\{\sum_{i=0}^{m}
\cK_{t_{i+1}-\s_{k_{0}}^{i}}^{Y_i}
\one_{\cG_{i}}<c_2 a_n\Big\} \cap \{\tau^X\geq a_n\}\Big)\\
&\quad \leq
\PP\Big(\sum_{i=0}^{\floor{n^{1/6}}-1}\cK_{b_n-\log^2n}^{Y_i}
  \one_{\cG_{i}}<c_2 a_n \Big)\\
&\quad\quad+ \PP(\{\exists i\leq m: 
    \s^i_{k_0}-t_i>\log^2 n\}\cap \cA^\his_{a_n}\cap\{\tau^X\geq a_n\}).
\end{split}\]
Using \eqref{eq:tmp17}
we get for some $C_{8},c_{8}>0$
\[
\PP(\{\exists i\leq m: 
    \s^i_{k_0}-t_i>\log^2 n\}\cap \cA^\his_{a_n}\cap\{\tau^X\geq a_n\})
\leq C_8 e^{-c_8\log^2 n}.
\]
To bound the first probability, we use that
\[
\sum_{i=0}^{m}\cK_{b_n-\log^2n}^{Y_i}
  \one_{\cG_{i}}\geq
\sum_{i=0}^{m}\cK_{b_n-\log^2n}^{Y_i}
  \one_{\cS_i}
-\sum_{i=0}^{m}\cK_{b_n-\log^2n}^{Y_i}
  (\one_{\cB_{i}}+  \one_{\cT_{i}})
\]
The processes $Y_i$ are i.i.d.\ and by \eqref{laps-lb-eq} from Proposition \ref{fact:lapProcessUpperBound} we get 
\[
\EE[\cK_{b_n-\log^2n}^{Y_i}  \one_{\cS_i}] 
\geq c z (b_n-\log^2 n).
\]
Hence by standard large deviations
estimates we get for some $C_7,c_7>0$
\[
\PP\Big(\sum_{i=0}^m \cK_{b_n-\log^2n}^{Y_i}
   \one_{\cS_i}
 <c_2 a_n \Big)\leq C_7 e^{-c_7\log^2 n}
\]
provided we pick $c_2$ small enough.   

It remains to bound the contributions involving $\cB_i$ and $\cT_i$.
Recall from Proposition \ref{fact:lapProcessUpperBound} 
that $\cK^{Y_i}_t\leq t+1+\cI^{Y_i}_t\leq t+1+2\cJ ^{Y_i}_t$,
and from the observations preceding \eqref{Ahis-bound} that the number of `jumps'
$\cJ ^{Y_i}_t$ is dominated by a Poisson process with rate 
$\b\tfrac n{n-1}$.  It follows that,
with probability at least
$1-C_{8}e^{-c_{8}\log^{2}n}$, we have that
$\cK_{b_n-\log^2n}^{Y_i}\leq 5\b b_n$ for all $i\leq m$.
Consequently, using also \eqref{sumBi}
\[\begin{split} 
\PP\Big(\sum_{i=0}^m
\cK_{b_n-\log^2n}^{Y_i}\one_{\cB_{i}}\geq 
5\b n^{1/4}\log n\Big)
&\leq\PP\Big(\sum_{i=0}^m
\one_{\mathcal{B}_{i}}\geq n^{1/12}\log n\Big)
+C_{8}e^{-c_{8}\log^{2}n}\\
&\leq C_{9}e^{-c_{9}\log^{2}n}+C_{8}e^{-c_{8}\log^{2}n}.
\end{split}\]
Finally, for the terms involving $\cT_i$
we again use that 
$\cK_{b_n-\log^2n}^{Y_i}\leq 5\b b_n$ for all 
$i\leq m$, with probability at least $1-C_{8}e^{-c_{8}\log^{2}n}$,
combined with \eqref{eq:probabilityOfTerribleEvent} to get
\[
\begin{split}
\PP\Big(\sum_{i=0}^m
\cK_{b_n-\log^2n}^{Y_i} \one_{\cT_i}
\one_{\{\tau^{X}\geq a_n\}} \geq 10\b n^{1/6}\Big)
&\leq\PP\Big(\sum_{i=0}^m
\one_{\cT_i}\one_{\{\tau^{X}\geq a_n\}} \geq2\Big)
+C_{8}e^{-c_{8}\log^{2}n}\\
&\leq C_{10}e^{-c_{10}\log^{2}n} + C_{8}e^{-c_{8}\log^{2}n}.
\end{split}\]
This establishes \eqref{eq:lapProcessLowerBound}.
\end{proof}

\subsection{Proofs of Propositions 
\ref{prop:preBalanceFact} and
\ref{prop:balanceFact}}

Now we turn to the proofs of the main results of 
Section \ref{sec:balance}, namely
Propositions \ref{prop:preBalanceFact} and
\ref{prop:balanceFact}, which concern the balance 
$\bal(v,\floor{n^{1/2}})$ in cycles of length at least 
$\floor{n^{1/2}}$.
We start with the following corollary of 
Proposition \ref{prop:goodBalanceForExploration},
which states that the bounds in that proposition
hold uniformly over all possible starting points 
$(v,\varphi)\in V_n\times S^1$ for the exploration process $X$.  
We use the notation $\signExploration_{t}^{X}(v,\varphi)$,
$\mathcal{K}_{t}^{X}(v,\varphi)$ and
$\tau^{X}(v,\varphi)$ when $X$ starts  at $(v,\varphi)$.
\begin{corollary}
\label{cor:goodBalanceForEverybody} Let $\beta_{1}>\beta_{0}>1$.
There exist $C,c>0$ such that for all $\beta\in[\beta_{0},\beta_{1}]$
we have 
\begin{equation}\label{eq:CorwindingProcessLowerbound}
\PP_{\beta}\left(\exists_{(v,\varphi)\in V_n\times S^1}:
|\signExploration_{n^{1/3}}^{X}(v,\varphi)|
\one_{\{\tau^{X}(v,\varphi) \geq  n^{1/3}\}}
\geq 3n^{1/4}\log^{2}n\right)\leq Ce^{-c\log^{2}n},
\end{equation}
and, for some $c_{2}>0$,
\begin{equation}\label{eq:CorlapProcessLowerbound}
\PP_{\beta}\left(\exists_{(v,\varphi)\in V_n\times S^1}:
\left\{ \cK_{n^{1/3}}^{X}(v,\varphi)<c_{2}n^{1/3}\right\} 
\cap\left\{ \tau^{X}(v,\varphi)
\geq n^{1/3}\right\} \right)
\leq Ce^{-c\log^{2}n}.
\end{equation}
\end{corollary}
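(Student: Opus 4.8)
The plan is to derive the two uniform estimates from Proposition~\ref{prop:goodBalanceForExploration} by a union bound over a \emph{deterministic} mesh of starting points, together with the elementary fact that, for a fixed link configuration $\omega$, the quantities $\signExploration_{n^{1/3}}^{X}$, $\cK_{n^{1/3}}^{X}$ and $\tau^{X}$ depend on the starting phase only through the arc of $\{v\}\times S^{1}$ containing it, up to errors negligible on the relevant scales. The first thing I would record is the underlying deterministic comparison. Fix $\omega$, a vertex $v$, and two phases $\varphi,\psi$ lying in a common arc of $\{v\}\times S^{1}$ cut out by the links of $\omega$ on circle $v$ (i.e.\ $\omega$ has no link at $(\{v,w\},\cdot)$ for any $w$ between $\varphi$ and $\psi$ on circle $v$). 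Then $(v,\varphi)$ and $(v,\psi)$ lie on one and the same loop $\gamma$ of $\omega$, of some length $L$, and both explorations trace out $\gamma$ at unit speed in the same direction starting from two points at $\gamma$-arclength $|\varphi-\psi|\le 1$ apart; hence, writing $\delta=\psi-\varphi\in(-1,1)$ and continuing the trajectories periodically past their common closing time $\tau^{X}(v,\varphi)=\tau^{X}(v,\psi)=L$, one has $X_{t}(v,\psi)=X_{t+\delta}(v,\varphi)$ for all $t\ge0$. From this I extract two consequences: since $\signExploration^{X}$ is $1$-Lipschitz,
\[
\bigl|\signExploration_{n^{1/3}}^{X}(v,\psi)-\signExploration_{n^{1/3}}^{X}(v,\varphi)\bigr|\le 2 ;
\]
and, since the trajectory of $X(v,\psi)$ over $[1,n^{1/3}-1]$ is contained in that of $X(v,\varphi)$ over $[0,n^{1/3}]$ while during $[0,1]$ the exploration $X(v,\psi)$ passes through level $0\in S^{1}$ at most $\cK_{1}^{X}(v,\psi)\le 2+\cI_{1}^{X}(v,\psi)\le 2+2\cJ_{1}^{X}(v,\psi)$ times by Proposition~\ref{fact:lapProcessUpperBound},
\[
\cK_{n^{1/3}}^{X}(v,\varphi)\ \ge\ \cK_{n^{1/3}-1}^{X}(v,\psi)-2-2\cJ_{1}^{X}(v,\psi).
\]

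Next I would set $K:=\lceil e^{c_{1}\log^{2}n/2}\rceil$, with $c_{1}$ the constant of Proposition~\ref{prop:goodBalanceForExploration}, and take the deterministic mesh $G:=\{(v,j/K):v\in V_{n},\ 0\le j<K\}$, of cardinality $|G|=nK\le 2n\,e^{c_{1}\log^{2}n/2}$. I would then check that each of three events has probability at least $1-Ce^{-c\log^{2}n}$: (i) by Proposition~\ref{prop:goodBalanceForExploration} applied at each (deterministic) point of $G$ — its $\cK$-bound used also with $n^{1/3}$ replaced by $n^{1/3}-1$, which is a cosmetic change to its proof — and a union bound, every $(v,\psi)\in G$ with $\tau^{X}(v,\psi)\ge n^{1/3}$ satisfies $|\signExploration_{n^{1/3}}^{X}(v,\psi)|<3n^{1/4}\log n$ and $\cK_{n^{1/3}-1}^{X}(v,\psi)\ge c_{2}(n^{1/3}-1)$, the union bound costing $nK\cdot C_{1}e^{-c_{1}\log^{2}n}=C_{1}n\,e^{-c_{1}\log^{2}n/2}$, which is admissible because $\log n=o(\log^{2}n)$; (ii) no circle carries two links within distance $1/K$, which holds with probability at least $1-n\beta^{2}/K$ since on each circle the links form a rate-$\beta$ Poisson process on $S^{1}$ and the expected number of pairs within distance $1/K$ is at most $\beta^{2}/K$; (iii) $\cJ_{1}^{X}(v,\psi)\le\log^{2}n$ for all $(v,\psi)\in G$, by standard Poisson tail bounds combined with $|G|\le 2n\,e^{c_{1}\log^{2}n/2}$. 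On the intersection of these events, any $(v,\varphi)$ with $\tau^{X}(v,\varphi)\ge n^{1/3}$ lies in an arc $I$ of $\{v\}\times S^{1}$ with $|I|\ge 1/K$ by (ii), so $I$ contains a mesh point $\psi$, and the comparison from the first paragraph gives $\tau^{X}(v,\psi)=\tau^{X}(v,\varphi)\ge n^{1/3}$, $|\signExploration_{n^{1/3}}^{X}(v,\varphi)|\le 3n^{1/4}\log n+2\le 3n^{1/4}\log^{2}n$, and $\cK_{n^{1/3}}^{X}(v,\varphi)\ge c_{2}(n^{1/3}-1)-2-2\log^{2}n\ge\tfrac12 c_{2}n^{1/3}$ for $n$ large. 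Since for $(v,\varphi)$ with $\tau^{X}(v,\varphi)<n^{1/3}$ the indicators vanish, this yields \eqref{eq:CorwindingProcessLowerbound} and \eqref{eq:CorlapProcessLowerbound} (with $c_{2}$ renamed), uniformity over $\beta\in[\beta_{0},\beta_{1}]$ being inherited from Proposition~\ref{prop:goodBalanceForExploration}.

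The one point requiring real care is the calibration of the mesh spacing $1/K$, and this is where I expect the main obstacle. On the one hand $K$ must be large enough that, except on an event of probability $\le Ce^{-c\log^{2}n}$, every arc on every circle contains a mesh point; since the smallest gap among the $\Theta(n)$ links on all circles is only of polynomial order, and fails to exceed a given polynomial only with polynomially small probability, this forces $K$ to be super-polynomial, of order $e^{\Theta(\log^{2}n)}$. On the other hand $K$ must be small enough that the union bound over the $nK$ mesh points survives against the $e^{-c_{1}\log^{2}n}$ tail of Proposition~\ref{prop:goodBalanceForExploration}, i.e.\ $K\le e^{c\log^{2}n}$ for some $c<c_{1}$. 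These two requirements are compatible precisely because $\log(nK)$ can be kept $O(\log^{2}n)$ and strictly below $c_{1}\log^{2}n$ — which is exactly the point at which the fact that Proposition~\ref{prop:goodBalanceForExploration} beats every polynomial in $n$ gets used. (The extra factor $\log n$ in the thresholds of Corollary~\ref{cor:goodBalanceForEverybody}, relative to Proposition~\ref{prop:goodBalanceForExploration}, is present only to comfortably absorb the $O(1)$ and $O(\log^{2}n)$ corrections incurred in passing from a mesh point to a nearby phase.)
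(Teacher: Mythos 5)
Your argument is correct and essentially the same as the paper's: discretize the phase coordinate by a deterministic mesh of $e^{\Theta(\log^2 n)}$ points, introduce a good event (controlled by a Poisson moment bound) under which every starting phase shares its link-free arc with a mesh point, and union-bound the $e^{-c_1\log^2 n}$ estimate of Proposition~\ref{prop:goodBalanceForExploration} over the mesh, the calibration being possible precisely because that tail beats every polynomial. The only difference is cosmetic: the paper's event $\cD$ (at most one link per thin slice of $E\times S^1$) puts a mesh point within $\d\approx e^{-\eps\log^2 n}$ of any phase, so the resulting tiny time-shift is simply ignored, whereas you let the matched mesh point sit anywhere in the (possibly order-one) arc and compensate by additionally controlling $\cJ^X_1$ on the mesh.
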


In the proof we will use the following notation.
For a measurable subset $A\se E\times S^1$
and $\omega\in\Omega$ we denote the restriction 
\[
\omega_{A}:=\left\{ (e,\varphi,m)\in\omega:
(e,\varphi)\in A\right\}.
\]
Also recall that we will often identify $S^{1}$ with the interval $[0,1)$.

\begin{proof}
We give details for \eqref{eq:CorwindingProcessLowerbound},
the argument for \eqref{eq:CorlapProcessLowerbound}
is very similar.  Write
\[
\cB(v,\varphi)=\big\{
|\signExploration_{n^{1/3}}^{X}(v,\varphi)|
\one_{\{\tau^{X}(v,\varphi) \geq  n^{1/3}\}}
\geq 3n^{1/4}\log^{2}n
\big\}.
\]
We fix $\eps>0$ to be a small enough positive constant
(to be specific, $\eps$ needs to be smaller than the 
constant $c_1$ in the
exponent on the right-hand-side of 
\eqref{eq:windingProcessUpperbound}).
Let $m=\floor{e^{\eps \log^2 n}}$ and 
define the growing sequence
of sets $A_{i}:=E\times[0,\beta_{0}/\beta_{1}+i\d]$, where
$\d=\d_{n}:=\tfrac1m\big(1-\tfrac{\beta_{0}}{\beta_{1}}\big)$
and $i\in\{ 0,1,\ldots,m\}$.  
We will consider the sequence $\om_{A_i}$ which we think of as
revealing the configuration $\om$ in increments of size $\d$.
Consider the event
\[ 
\mathcal{D}:=\left\{ \om:
|\om_{A_{i}\sm A_{i-1}}|\leq1
\text{ for each }i\in\{ 1,\ldots,m\} \right\}
\] 
that each step in the sequence reveals at most one more link. 
Since $|\om_{A_{i}\sm A_{i-1}}|$ is Poisson distributed with mean
$\binom{n}{2}\tfrac\b{n-1}\d_n$  we have
for some $\tilde C,C_{2},c_{2}>0$ that
\begin{equation} \label{eq:Dcomp}
\PP(\mathcal{D}^{c})\leq\sum_{i=1}^m
\PP\left(|\om_{A_{i}\sm A_{i-1}}|\geq2\right)
\leq C_{2}e^{-c_{2}\log^{2}n}.
\end{equation}
Thus it suffices to show that 
$\PP(\cup_{(v,\varphi)}\cB(v,\varphi)\cap \cD)$ satisfies the bound
\eqref{eq:CorwindingProcessLowerbound}.

Now on $\cD$, to determine if there is some 
$(v,\varphi)\in V_n\times S^1$ for which $\cB(v,\varphi)$ holds
it suffices to consider $\varphi$ of the form
$\varphi_i=\beta_{0}/\beta_{1}+i\d$ for $0\leq i\leq m$.
Indeed, if $\varphi$ is arbitrary, let $i$ be such that
$\varphi_{i-1}\leq \varphi\leq \varphi_i$.  Then (on $\cD$)
the exploration started at $(v,\varphi)$ agrees either with that
started at $(v,\varphi_{i-1})$ or that started at $(v,\varphi_i)$
(up to a small time-shift of size at most $\d$ which we will
ignore).  Hence, using Proposition 
\ref{prop:goodBalanceForExploration},
\[
\begin{split}
\PP(\cup_{(v,\varphi)}\cB(v,\varphi)\cap \cD)
\leq \sum_{v\in V_n}\sum_{i=0}^m
\PP(\cB(v,\varphi_i))
\leq n e^{\eps\log^2 n} C_{1}e^{-c_{1}\log^{2}n}.
\end{split}
\]
For $\eps>0$ small enough, this satisfies the claimed bound.
\end{proof}

To proceed we will need some notations and observations
which will allow us to
relate the winding process, $\cL$, to the balance of cycles, 
$\bal$.  
Let $X_s=X_s(v_0,\varphi_0,d_0)$ denote the
exploration started at $(v_0,\varphi_0)$ in the direction 
$d_0\in\{-1,+1\}$, viewed at time $s$.
Let us write $X_s=(v_s,\varphi_s,d_s)$
and define
\[
\bal^X_t(v_0,\varphi_0,d_0)=\sum_{0\leq s\leq t}
d_s \one_{\{\varphi_s=0\}}.
\]
(Although formally the summation is over an uncountable set, 
almost surely there is only a finite number of nonzero terms.)
In words, $\bal^X_s$ totals the number of visits of $X$ to level
$\varphi=0$, counted with the sign given by the direction of travel. 
Note that our previously defined balance-quantity may be written as 
\[
\bal(v,k)=\bal^X_{\tau_k}(v,0,+1)
\]
where $\tau_k$ is the first time at which $X$ has made $k$ visits
to level $\varphi=0$.  

It is easy to see the following:
for any starting point $(v_0,\varphi_0,d_0)$ and any $t\geq0$ we have
that 
\begin{equation}\label{error3}
\big| |\bal^X_t(v_0,\varphi_0,d_0)| - 
|\cL^X_{t}(v_0,\varphi_0,d_0)| \big|\leq 3.
\end{equation}
Indeed, for $t=0$ the two terms are either 1 and 0 (if $\varphi_0=0$)
or 0 and 0 (if $\varphi_0\neq0$).  
As $t$ increases, $|\bal^X_t|$ stays constant until $X$ passes level
$\varphi=0$, at which time it changes by 1.  Until this time $|\cL^X_t|$
can change by at most 1, since if it changes more then this
necessarily means that $X$ passes level $\varphi=0$;
hence the difference in \eqref{error3} is 
certainly bounded by 2.
Between successive visits to $\varphi=0$ it remains bounded by 2 for
the same reason.  Finally, after the last visit to $\varphi=0$ we may
have that $\cL^X_t$ changes by up to 1 while $|\bal^X_t|$
remains constant.  Thus the difference is at most 3.

\begin{proof}[Proof of Proposition \ref{prop:preBalanceFact}]
Let 
\[
\cA_{n}:=\bigcap_{(v,\varphi)\in V_n\times S^1} \Big\{ 
|\signExploration_{n^{1/3}}^{X}(v,\varphi)|
\one_{\{\tau^{X}(v,\varphi)\geq n^{1/3}\}}
\leq 3n^{1/4}\log^{2}n\Big\} 
\]
and 
\[
\cB_{n}:=
\bigcap_{(v,\varphi)\in V_n\times S^1} 
\Big(\Big\{ \mathcal{K}_{n^{1/3}}^{X}(v,\varphi)
\geq c_{2}n^{1/3}\Big\} 
\cup
\Big\{ \tau^{X}(v,\varphi)\leq n^{1/3} \Big\} \Big),
\]
where $c_2$ is as in \eqref{eq:lapProcessLowerBound}. 
(To see that $\cA_n$ and $\cB_n$ are measurable, note that one gets
the same events if $\varphi$ is restricted to rationals.)
By Corollary \ref{cor:goodBalanceForEverybody} 
we have $\PP(\cA_{n}\cap\cB_{n})\geq1-Ce^{-c\log^{2}n}$
for some $C,c>0$, so it suffices to consider
$\om\in\cA_{n}\cap\cB_{n}$. 

Suppose $v$ is such that $|\cC_\om(v)|\geq\floor{n^{1/2}}$
(otherwise there is nothing to prove).
For $i\geq 1$, let $t_{i}=i n^{1/3}$ and let
$i_{0}:=\min\{ i\geq 1: \cK_{t_{i}}^{X}(v,0)\geq n^{1/2}\}$.  
Thus by time $t_{i_0}$ the exploration (started at $(v,0)$)
has visited the first 
$\floor{n^{1/2}}$ vertices in $\cC_\om(v)$ following $v$.
Since $\om\in\cB_{n}$, the contributions to $\cK^X_t$ between
successive $t_i$ are all at least $c_2n^{1/3}$;  using the additivity
of $\cK^X_t$ we conclude that
 $i_{0}\leq c_{2}^{-1}n^{1/6}$.

Let us write $X_{t_i}=(v_i,\varphi_i,d_i)$.
Note that (using \eqref{error3})
\[\begin{split}
|\bal(v,\floor{n^{1/2}})|&\leq \sum_{i=1}^{i_0-1}
|\bal^X_{t_i}(v_{i-1},\varphi_{i-1},d_{i-1})|
+ t_{i_0}-t_{i_0-1}\\
&\leq \sum_{i=1}^{i_0-1} \big(
|\cL^X_{t_i}(v_{i-1},\varphi_{i-1},d_{i-1})|
+3\big)
+ n^{1/3}.
\end{split}\]
As $\omega\in \mathcal{A}_{n}$ we get 
\[
|\bal(v,\floor{n^{1/2}})|
\leq
(i_0-1)(3n^{1/4}\log^{2}n+3)+n^{1/3}
\leq n^{5/12}\log^{3}n,
\]
for $n$ large enough, as required. 
\end{proof}

\begin{proof}[Proof of Proposition \ref{prop:balanceFact}]
This will follow from Proposition \ref{prop:preBalanceFact}
using a similar argument as for Corollary
\ref{cor:goodBalanceForEverybody}.
As in that argument, we fix some small enough $\eps>0$
and we use the same notation $m$, $\d$, $A_i$ and $\cD$.

Note that, on $\cD$,
for each $s\leq |\om|$ there is some (random) $i$
such that $\vec\om_s=\vec\om_{A_i}$.  Hence the probability in 
\eqref{eq:balanceFact} is at most
\[
\begin{split}
&\sum_{v\in V_n}\sum_{i=0}^m
\PP_{\beta}\left(\left\{ \bal_{\vec{\omega}_{A_i}}(v,\floor{n^{1/2}})
\geq n^{5/12}\log^{3}n\right\} 
\cap\left\{ |\cC_{\vec{\omega}_{A_{i}}}(v)|\geq \floor{n^{1/2}}\right\} \right)\\
&\qquad\quad+\PP(\cD^c\cup\{|\om|< n^\rho\}).
\end{split}
\]
We observe that under $\PP_{\beta}$ the distribution of $\vec{\omega}_{A_{i}}$
is the same as the distribution of $\vec{\omega}$ under $\PP_{\bar\beta_{i}}$,
with $\bar\beta_{i}=\beta\left(\beta_{0}/\beta_{1}+i \d\right)\in[\beta_{0},\beta_{1}]$.
Using Proposition \ref{prop:preBalanceFact} and a straightforward
bound on $\PP(\cD^c\cup\{|\om|< n^\rho\})$
we deduce that the probability in 
\eqref{eq:balanceFact} is at most
\[
n e^{\eps\log^{2}n} Ce^{-c\log^{2}n}+C_{2}e^{-c_{2}\log^{2}n}.
\]
Choosing $\eps>0$ small enough
concludes the proof. 
\end{proof}

\section{Poisson--Dirichlet coupling}\label{sec:PDcoupling}

In this section we prove our main result, Theorem \ref{thm:main}.
From the previous sections, Lemma \ref{large-lem} 
tells us that there are
cycles of size of the order $n$ and 
Propositions \ref{prop:preBalanceFact} and \ref{prop:balanceFact}
tell us that these cycles are `balanced'.  
The former lemma is stated in terms
of a sequentially constructed $\vec\om$ with a fixed number of links,
whereas the latter are formulated in terms of $\om$ sampled from the
Poisson law $\PP_\b$, so one of our tasks is to combine these two
descriptions.  Another task is to convert the balance-property of
Propositions \ref{prop:preBalanceFact} and \ref{prop:balanceFact}
into a quantitative result about the probability of splitting cycles
when a uniformly placed link is added, see Lemma \ref{lem:split-prob}.
Following that,
the main task will be to provide a coupling of a PD($\tfrac12$)
sample with the rescaled cycle sizes.  
We begin by introducing some relevant notation and facts,
as well as an outline of the proof.  Throughout this section, $\b>1$
and $\nu\in[0,1)$ are fixed.

\subsection{Preparation and outline}
\label{sec:prep}

The coupling with
PD($\tfrac12$)  will involve sequentially appending a small number of
uniformly, independently placed links to a random configuration $\om$.   
We will do this as follows.  First,  let $\om$ have distribution $\PP_\b$.  
Next, let $q\geq0$ be an integer-valued random variable
which is independent of $\om$ and bounded (as $n\to\oo$).  
Recall that $\vec\om$ denotes the ordered sequence of links in $\om$
and that $\vec\om_s$ denotes the first $s$ links of $\vec\om$.

To start the coupling, we will consider $\vec\om_s$
for $s=|\om|-q$.  
We construct a sequence $\vec\om'_t$ for 
$t\in\{s,s+1,\dotsc,s+q\}$, where $\vec\om'_s=\vec\om_s$
and the following $\vec\om'_t$ are obtained
by sequentially and independently  
appending in total $q$ uniformly placed links
one at a time.
Obviously the final configuration $\vec\om'_{s+q}$
then has $|\om|=s+q$ links, and it agrees in distribution with $\vec\om$.  
Letting $\cycles_t=\cycles_{\vec\om'_t}$ 
denote the cycle structure of $\vec\om'_t$,
it thus suffices to prove that Theorem \ref{thm:main}
holds for  $\cycles_q$.

Before proceeding, let us recall the key features 
of $\vec\om'_s=\vec\om_s$ 
which follow from our work in the
previous sections.  Precise statements are deferred to Section
\ref{sec:proofofmainthm}. 
First, it is clear that
(since $\b>1$)  we can find a constant
$c>\tfrac12$ such that 
the number of links $s$ satisifes $s\geq cn$ with high probability 
(converging to 1 as $n\to\oo$).   On this event 
Lemma \ref{large-lem} applies 
to $\vec\om'_s$, meaning (roughly speaking) that there are cycles of
size of the order $n$ which together occupy a fraction $\approx zn$
of all vertices.  
(Here $z$ is the same as in Proposition \ref{fact:loop_closure}.)
Second, since $q$ is bounded, Proposition \ref{prop:balanceFact}
certainly applies to $\vec\om'_s$.  Thus (with high probability), 
in any of the large cycles of
$\vec\om'_s$, any segment of $\floor{n^{1/2}}$ consecutive vertices in
that cycle has balance $|\bal|<n^{5/12}\log^3n$.

Next, let us describe the evolution of $\cycles_t$, $0\leq t\leq q$,
in a way which is suitable for the coupling with PD($\tfrac12$).
Since PD($\tfrac12$)
is a probability distribution on `continuous' partitions of the
interval $[0,1)$ it is convenient to represent $\cycles_t$ also
as a
(labelled) partition of $[0,1)$ (in the actual proof we will use a
different interval but the idea is the same).  The mapping is
fairly intuitive so we do not give a completely detailed description.  
Each vertex $v\in V_n$ is represented as a subinterval $I(v)$
of the form $[\tfrac in,\tfrac{i+1}n)$ for $0\leq i=i(v)\leq n-1$,
and this mapping is chosen so that the cycles of $\cycles_t$
become disjoint intervals of the form $[\tfrac in,\tfrac jn)$
for $0\leq i<j\leq n-1$, where if $u,v$ are consecutive in a cycle
then $I(u)$ and $I(v)$ are consecutive subintervals of 
$[\tfrac in,\tfrac jn)$ (interpreted cyclically).
The subintervals $I(v)$ are labelled using the labels $\up$, $\dn$ 
consistently with the orientations of the vertices within the cycles.  
See Figure \ref{arrowblocks-fig}.

\begin{figure}[hbt]
\centering
\includegraphics{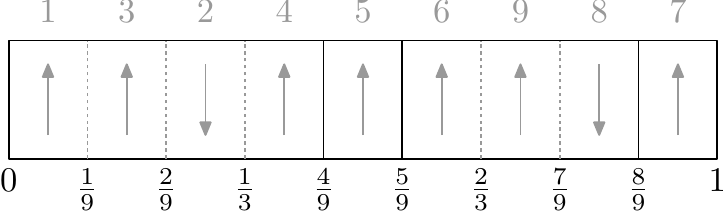}
\caption{
Representation of cycles as subintervals of $[0,1)$.  
Solid vetical lines delimit the cycles and dashed lines the vertices. 
The cycles here
are the same as in Figure \ref{fig:loopexample}, i.e.\
$(1^\up,3^\up,2^\dn,4^\up)$,
$(5^\up)$, $(6^\up,9^\up,8^\dn)$ and $(7^\up)$. 
}
\label{arrowblocks-fig}
\end{figure}

Naturally, this mapping is defined up to (i) cyclic rotations within
each cycle, (ii) overall reversal of all the labels (arrows) in
cycles, and (iii) the relative placement of the intervals
$[\tfrac in,\tfrac jn)$ representing the cycles within $[0,1)$.
Regarding the last item, the canonical way to order the 
intervals would be by
decreasing length, but we wish to keep the flexibility of reordering
them for the time being.

In this setting the dynamics of
uniformly placing links may be constructed 
using two independent uniform random variables $U,U'$ 
in $[0,1)$:
\begin{itemize}[leftmargin=*]
\item We first
sample the mark $m\in\{\cross,\dbar\}$ of the link with
  probability $\nu$ for $\cross$.
\item 
We then sample $U$ and set the first endpoint of the link 
to be $u$ if $U$ falls in the interval $I(u)$.
\item Before selecting the other endpoint we (i) move the 
(interval $[\tfrac in,\tfrac jn)$ representing the) cycle containing
$u$ to the front of $[0,1)$, then (ii) cyclically reorder this cycle 
so that $I(u)=[0,\tfrac1n)$.
\item Now we select the second endpoint by
 setting it to be $v$ if $U'\in I(v)$.
It may happen that $I(v)=I(u)$;  since this has probability
$\tfrac1n$ we will in practice be able to
disregard this possibility, but to
be definite let us say that nothing happens to the cycles in this
case. 
\end{itemize}
Having selected the endpoints of the link as well as its mark, we
apply the rules given in Section \ref{sec:large} for splitting,
merging or twisting cycles. 

Using this construction, the sequence
$\cycles_1,\dotsc,\cycles_q$ may be obtained starting with
$\cycles_0$ and using a sequence $\{(U_t,U'_t,m_t))\}_{t=1}^q$
of independent random variables with the above distributions.  

We now turn to the task of showing that the probability of splitting a
large cycle is close to $\tfrac12$, in a sense which we will make
precise. 
Let us assume that $\vec\om'_s$ belongs to the event
\be\label{eq:bal}
\bigcap_{v\in V_n} \big(
\{|\bal(v,\floor{n^{1/2}})|<n^{5/12}\log^3 n\}
\cup \{|\cC(v)|<\floor{n^{1/2}}\}
\big)
\ee
that any cycle of size at least $\floor{n^{1/2}}$ is `balanced'.
This event holds with high probability due to Proposition
\ref{prop:balanceFact}.
Recalling that the cycles $\cycles_0$ form a partition of the vertex
set $V_n$, we define a refinement $\fS$ of this partition into
`segments' as follows.
For each cycle $\cC\in\cycles_0$ satisfying $|\cC|\geq\floor{n^{1/2}}$
we fix a division of $\cC$ into non-intersecting 
sets of consecutive vertices, each
of size between $\floor{n^{1/2}}$ and $2\floor{n^{1/2}}$.
 If $|\cC|<\floor{n^{1/2}}$ then we declare 
$\cC$ to be a segment on its own.  
On the event in \eqref{eq:bal} we see 
using the triangle-inequality that 
each segment $S\in\fS$
satisfying $|S|\geq\floor{n^{1/2}}$ has balance 
$|\bal(S)|<2n^{5/12}\log^3 n$, where $\bal(S)$ is the difference
between the number of $\up$ and number of $\dn$ in $S$.

As we  proceed by adding links, and
thereby modify the cycle structure, we keep 
the partition $\fS$ into segments
fixed.  That is, at all later steps we
will  `remember' for each vertex
$v\in V_n$ which segment $S$  it belonged to at $t=0$.
After some steps a segment $S$ need no longer be a consecutive set of
vertices within a cycle, for example if a cycle is split in the middle
of $S$.  We say that a segment $S$ is 
\emph{untouched} at step $t\in\{1,\dotsc,q\}$ 
if none of the links placed in steps 
$1,2,\dotsc,t-1$ had an endpoint in $S$, otherwise the segment is
\emph{touched}. 
If $S$ is untouched then it is also `intact' 
in the sense that it is still consists of consecutive vertices in
some cycle, and $|\bal(S)|$ is unchanged from $t=0$.

In the representation of $\cycles_0$ as a collection of marked
subintervals of $[0,1)$, the segments $S$ become subintervals
(of length $\leq 2n^{-1/2}$) of the
intervals
representing the cycles (possibly we may have to interpret these
subintervals cyclically). 
 Recall that we used a uniform random variable $U'\in[0,1)$ to select
the second endpoint of a uniformly placed link.  We now modify this
construction slightly, and will instead use \emph{two}
uniform independent $U',U''\in[0,1)$.
We begin by sampling $U'$, and we note which segment $S$ it falls in
(more precisely, which subinterval representing such a segment).
If this segment $S$ is \emph{touched} 
then we let $v$ be the vertex selected by $U'$ as
before and we do not use $U''$.  However, if $S$ is untouched then
we do not record the precise location of $U'$ within $S$;  instead
we use $U''$ to independently select a uniform location within
$S$ and we select the second endpoint 
of the link to be $v$ if $U''\in I(v)$.

The following result is now straightforward.
Intuitively,  it tells us that the
probability of splitting a long cycle is very close to $\tfrac12$,
moreover  the choice of whether or not to split is almost
independent of the location where we propose to split.

\begin{lemma}\label{lem:split-prob}
Assume that the event in \eqref{eq:bal} holds 
at $t=0$.   At step $t\geq1$ (i.e.\ in the transition 
 $\cycles_{t-1}\to\cycles_t$),
let $u$ be the vertex selected by $U_t$ and let 
$\cC(u)\in\cycles_{t-1}$ be the cycle containing $u$.
Fix the orientation of $\cC(u)$ so that $u$ has label $\up$.
Suppose that $U'_t$ selects a segment $S$ which:
(i) is untouched, (ii) is in the cycle $\cC(u)$, i.e.\ $S\se\cC(u)$,
and (iii) has size $|S|\geq\floor{n^{1/2}}$.
Let $\a\in\{\up,\dn\}$ be the label of the vertex $v$ selected by
$U''_t$.  Then (on the event described) the conditional probability 
$p_t=\PP(\a=\,\up\,\mid U'_t)$ that $v$ has the same 
orientation as $u$ satisfies 
\[
|p_t-\tfrac12|\leq n^{-1/12}\log^3 n. 
\]
\end{lemma}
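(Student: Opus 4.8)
The plan is simply to unwind the definitions; all the substantive content lies in the estimates of Sections \ref{sec:large} and \ref{sec:balance}, so the argument is short. The first step is to exploit that the segment $S$ selected by $U'_t$ is \emph{untouched} at step $t$. Inspecting the split/merge/twist rules of Section \ref{sec:large}, a transition whose added link has no endpoint in $S$ either leaves $S$ pointwise fixed or reverses $S$ wholesale as a block (flipping its internal order together with all its $\up/\dn$ labels): being a consecutive block in its cycle and containing neither endpoint of the new link, $S$ must lie entirely within a single one of the arcs delimited by those endpoint(s), and on each such arc the move acts either as the identity or as a reversal. Consequently $S$ is still a block of consecutive vertices of a single cycle, which by hypothesis (ii) is $\cC(u)\in\cycles_{t-1}$, and the quantity $|\bal(S)|$ (the absolute difference between the numbers of $\up$- and $\dn$-labelled vertices of $S$) is the same as at $t=0$, since a reversal only exchanges the two counts. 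As \eqref{eq:bal} is assumed at $t=0$ and $|S|\ge\floor{n^{1/2}}$ by hypothesis (iii), the triangle-inequality bound recorded just above \eqref{eq:bal} gives $|\bal(S)|<2n^{5/12}\log^3n$.

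The second step identifies the conditional law of the second endpoint $v$. By the modified sampling procedure using $U'_t$ and the independent $U''_t$: on the event that $U'_t$ falls in an untouched segment, the exact location of $U'_t$ inside that segment is discarded and $v$ is re-sampled via $U''_t$, uniformly among the $|S|$ vertices of $S$ and independently of the cycle structure. Orient $\cC(u)$ so that $u$ carries the label $\up$, and let $n_S^{\up}$ denote the number of vertices of $S$ then carrying $\up$; this is well-defined because $S\subseteq\cC(u)$, and it does not depend on the residual overall-reversal ambiguity, since a global reversal merely swaps $n_S^{\up}$ and $n_S^{\dn}$. Hence $p_t=\PP(\a=\,\up\,\mid U'_t)=n_S^{\up}/|S|$, and using $n_S^{\up}+n_S^{\dn}=|S|$ and $|n_S^{\up}-n_S^{\dn}|=|\bal(S)|$ we obtain
\[
\big|p_t-\tfrac12\big|=\frac{|\bal(S)|}{2|S|}<\frac{n^{5/12}\log^3n}{\floor{n^{1/2}}}\le n^{-1/12}\log^3n
\]
for all $n$ large, which is the claim.

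There is no genuine obstacle here: the lemma is, as the paragraph preceding it says, straightforward. The only points deserving a little care are (a) the orientation bookkeeping, i.e.\ that once $u$ is declared $\up$ the labels of all vertices of $S$ are unambiguous, which is precisely the content of the hypothesis $S\subseteq\cC(u)$; and (b) the assertion that ``untouched'' preserves both the consecutiveness of $S$ within a cycle and the value of $|\bal(S)|$, which is the single place where the explicit form of the split/merge/twist transitions of Section \ref{sec:large} is actually invoked. Everything else is one-line estimation and bookkeeping.
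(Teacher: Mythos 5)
Your proof is correct and follows essentially the same line as the paper's: identify $p_t$ with the fraction of $\up$-labelled vertices in $S$, rewrite $|p_t-\tfrac12|$ as $\tfrac12|\bal(S)|/|S|$, and apply the balance bound from \eqref{eq:bal}. The extra paragraph justifying that an untouched segment stays consecutive in some cycle with $|\bal(S)|$ unchanged simply spells out what the paper asserts in the discussion immediately preceding the lemma, so the argument is the same in substance, merely with more detail.
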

\begin{proof}
We have that $p_t=\#(\up\mbox{ in }S)/|S|$ so
\[
|p_t-\tfrac12|=\tfrac12|\bal(S)|/|S|\leq n^{-1/12}\log^3 n.\qedhere
\]
\end{proof}

We now give a brief outline of the rest of this section.
First, in Section \ref{sec:couplingdescription}, we describe a
slight modification of a coupling due to Schramm \cite{schramm}.
The coupling evolves a pair of partitions of the interval $[0,1)$ such
that, firstly, 
the marginal dynamics have PD($\theta$) as an invariant distribution,
and, secondly, the two partitions become `close'.
Moreover, for $\theta=\tfrac12$ these dynamics
are very similar to the dynamics of $\cycles_t$ above
(Schramm defined the coupling for $\theta=1$
but as we will see and as has been noted before \cite{G-U-W}, the
extension to $\theta\in(0,1]$ is completely straightforward).
Then, in Section \ref{sec:proofofmainthm}, 
we focus on the case $\theta=\tfrac12$
and show how an adaptation 
of Schramm's coupling allows us to couple a 
PD($\tfrac12$)-sample
to the `discrete' partition coming from the cycles $\cycles_t$.
This will allow us to prove Theorem \ref{thm:main}.
Lemma \ref{lem:split-prob} comes in here and, intuitively speaking,
by using the pair $(U',U'')$ as described above we 
``trade accuracy for independence'':
$U'$ will tell us the exact location for splitting in the 
PD($\tfrac12$)-distributed partition, whereas in $\cycles_t$ this is
decided by $U''$.  As we will see, 
the locations in the segment $S$ defined by
$U'$ and $U''$ are close enough to
each other, and $p_t$ is close enough to $\tfrac12$, that
the two partitions become more and more similar.

\subsection{Schramm's coupling}\label{sec:couplingdescription}

Fix any $\theta\in(0,1]$,
later we will take $\theta=\tfrac12$.
We will define a sequence 
$\big((\cY^t,\cZ^t):t=0,1,\dotsc\big)$ of pairs of random
partitions of $[0,1)$ into countably many intervals $[a,b)$, in
such a way that (i) the marginal dynamics are stationary for
PD($\theta$), and (ii) regardless of
starting configuration, $\cY^t$ and $\cZ^t$ become `closer' 
in a sense to be defined later. 

The subintervals $[a,b)$ of
 $[0,1)$ constituting the partitions $\cY^t$
and $\cZ^t$ will be called \emph{blocks}.
We will think of 
the blocks of $\cY^t$ and $\cZ^t$ as 
distinguishable, and as before leave some flexibility
about the relative placement of the blocks within $[0,1)$.
By a slight abuse of notation we will identify a block
$\cY^t_i\se[0,1)$ with its length $|\cY^t_i|\in[0,1]$.

Some of the blocks of $\cY^t$ will be \emph{matched} with blocks of
$\cZ^t$, and this relation is symmetric (if $\cY^t_i$ is matched with
$\cZ^t_j$ then $\cZ^t_j$ is matched with $\cY^t_i$).  Other blocks are
unmatched.  Matched pairs of
blocks have the same size, and
such pairs will be created in some instances of the
process we are about to describe.  The total length of all unmatched
blocks will be denoted by $R=R^t$ and the total length of matched
blocks $Q=Q^t$.  We place the matched blocks at the end of $[0,1)$
and the unmatched blocks at the beginning, and within the matched and
unmatched parts we order the blocks by decreasing size. 
See Figure~\ref{generic-fig}.

\begin{figure}[hbt]
\centering
\includegraphics{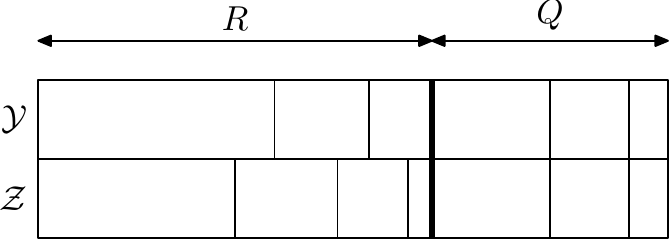}
\caption{
Example of a pair $(\cY,\cZ)$.  The matched blocks account
for a total $Q$ of the length, and the unmatched $R$, where $Q+R=1$.  
The thick vertical line indicates the border between the matched and
unmatched parts.
}
\label{generic-fig}
\end{figure}

A step 
of the coupling is completed with the help of three
independent random variables $U$, $U'$ and $W$, 
all uniformly distributed in $[0,1)$.
First $U$ is sampled, and if $U$ falls in the blocks
$\cY_i$ and $\cZ_j$ of $\cY$ and $\cZ$, respectively,
then we say that these two blocks 
of $\cY$ and $\cZ$ are \emph{highlighted}.
Moreover, the highlighted blocks are moved
to the front of $[0,1)$, see Figures \ref{u-unm-fig} and
\ref{u-m-fig}.
Then $U'$ is sampled and we do the following:
\begin{itemize}[leftmargin=*]
\item if (in either $\cY$ or $\cZ$) we have that 
$U'$ falls in a block different from the
highlighted one, then this block is merged with the highlighted block;
\item if
$U'$  falls in a highlighted block then we propose
a split of the highlighted block(s) at the position $U'$;
\item in the case of proposing a split, the split is carried out if we
  have that $W\leq\theta$.
\end{itemize}
Thus it is possible to merge blocks in both $\cY$ and $\cZ$,
to merge blocks in one but (propose a) split in the other, or to (propose a)
split in both.
 In the case when we propose a split in 
both $\cY$ and $\cZ$, 
note that the same $W$ is used for both, thus either both split or
neither.  In this case, if they split then at least two
of the newly created blocks are of the exact same size 
(see Figure \ref{v-m-split-fig}),  and those blocks
are then declared \emph{matched} and moved to the matched part.
Before the next step the blocks are sorted into the matched and
unmatched parts and ordered by size within those parts, as before.
Figures \ref{v-m-split-fig},  \ref{v-m-mg-fig} and \ref{v-mix-fig}
show some of the possible scenarios.

\begin{figure}[hbt]
\centering
\includegraphics{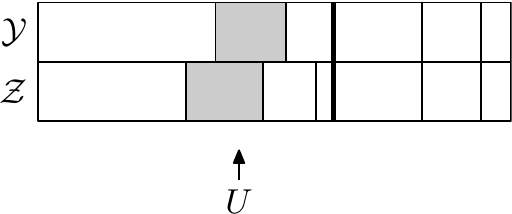}
\hspace{1cm}
\includegraphics{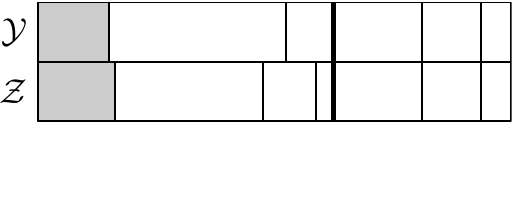}
\caption{
$U$ highlights a block in $\cY$ and a
block in $\cZ$ and they are moved to the front.  
In this case the highlighted blocks are unmatched.
}
\label{u-unm-fig}
\end{figure}

\begin{figure}[hbt]
\centering
\includegraphics{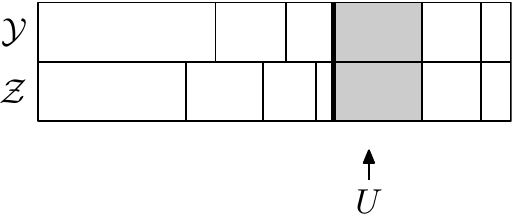}
\hspace{1cm}
\includegraphics{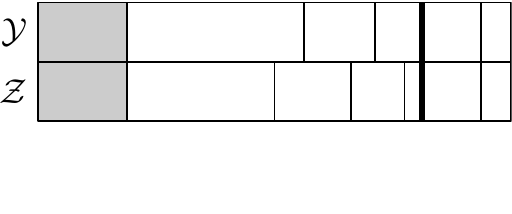}
\caption{
Example when  the highlighted blocks are matched.
}
\label{u-m-fig}
\end{figure}

\begin{figure}[hbt]
\centering
\includegraphics{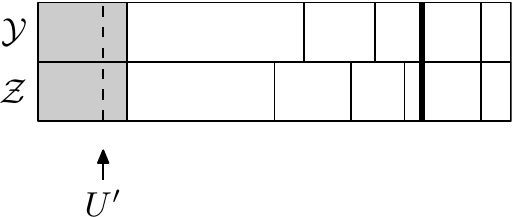}
\hspace{1cm}
\includegraphics{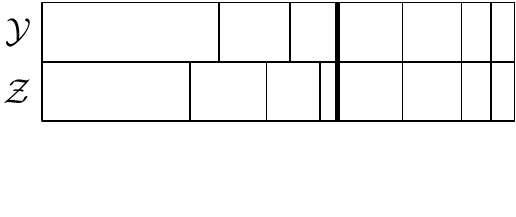}
\caption{
Example when a split is carried out in both $\cY$ and $\cZ$.
In this case the highlighted blocks are already matched and 
consequently all the formed blocks are matched.
}
\label{v-m-split-fig}
\end{figure}

\begin{figure}[hbt]
\centering
\includegraphics{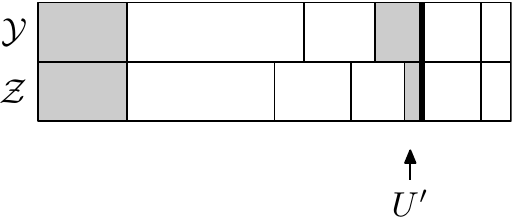}
\hspace{1cm}
\includegraphics{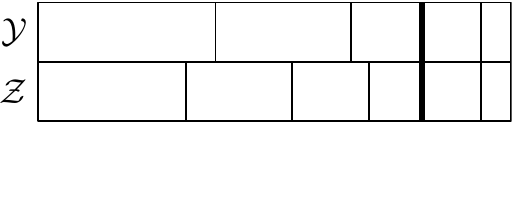}
\caption{
Example when
 the highlighted blocks are matched but are then merged with some
 unmatched blocks.
In this case the formed blocks are unmatched.
}
\label{v-m-mg-fig}
\end{figure}

\begin{figure}[hbt]
\centering
\includegraphics{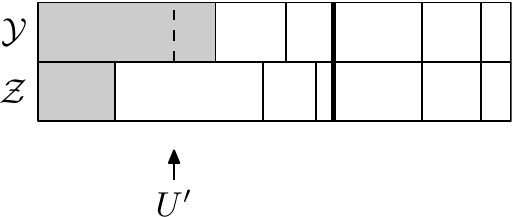}
\hspace{1cm}
\includegraphics{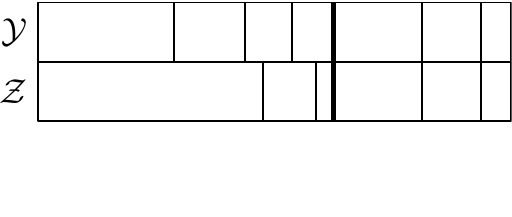}
\caption{
Example when
one block is split and one is merged.
}
\label{v-mix-fig}
\end{figure}

The  following result
about the marginal dynamics is due to
Tsilevich \cite{tsvi} (for $\theta=1$) and Pitman \cite{pitman}
(general $\theta$).
Another proof can be found in \cite[Theorem 7.1]{G-U-W}.

\begin{lemma}\label{PDinvariance}
If $\cY^0$ (respectively, $\cZ^0$) has distribution PD($\theta$)
then $\cY^t$ (respectively, $\cZ^t$) has distribution PD($\theta$)
for all $t\geq0$.
\end{lemma}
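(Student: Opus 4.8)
The plan is to recognise the one-step evolution of the $\cY$-marginal as the classical \emph{split--merge chain with split probability $\theta$} on mass partitions, and to deduce its PD($\theta$)-invariance by realising it as the $n\to\infty$ limit of a reversible chain on Ewens-distributed permutations. First I would read off the marginal dynamics from the description of Schramm's coupling: forgetting $\cZ^t$ and the matching, a step of $\cY^t$ consists of sampling two independent uniform variables $U,U'\in[0,1)$, highlighting the block of $\cY^t$ containing $U$; if $U'$ lies in a different block, that block is merged into the highlighted one; if $U'$ lies in the highlighted block (which, having been moved to the front, means the proposed split point is uniform within that block), the split is carried out with probability $\theta$ and otherwise the partition is left unchanged. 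Reordering the blocks by decreasing size between steps does not change the underlying mass partition, so the $\cY$-marginal is precisely the split--merge($\theta$) Markov chain on the space of mass partitions, and it remains to show PD($\theta$) is stationary for it.

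To prove stationarity I would pass through finite $n$. On the symmetric group $S_n$ consider the Ewens measure $\mu_n(\pi)\propto\theta^{c(\pi)}$, where $c(\pi)$ is the number of cycles, together with the following chain: pick an ordered pair $(i,j)$ with $i\neq j$ uniformly among the $n(n-1)$ possibilities; if $i,j$ lie in different cycles of $\pi$ move to $(ij)\pi$ (a merge), and if they lie in the same cycle move to $(ij)\pi$ with probability $\theta$ and otherwise stay put (a proposed split). Detailed balance is immediate: if $\pi'=(ij)\pi$ with $i,j$ in distinct cycles of $\pi$ then $c(\pi')=c(\pi)-1$, the only ordered pairs effecting either transition are $(i,j)$ and $(j,i)$, and $\mu_n(\pi)\cdot 1=\mu_n(\pi')\cdot\theta$ since $\theta^{c(\pi)}=\theta^{\,c(\pi)-1}\cdot\theta$; hence $\mu_n$ is reversible, in particular stationary. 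Read off on the decreasing sequence of cycle lengths divided by $n$, this chain becomes the continuum split--merge step: picking $i$ uniformly selects a size-biased cycle, which in the limit is picking $U$ uniform in $[0,1)$; picking $j$ uniformly corresponds to $U'$; ``different cycles'' becomes ``$U'$ in another block'' (merge) and ``same cycle'' becomes ``$U'$ in the highlighted block'', in which case applying $(ij)$ splits that cycle at a position whose rescaled location converges to a uniform point of the block, accepted with probability $\theta$.

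Since it is classical that the sorted rescaled cycle lengths of a $\mu_n$-random permutation converge in distribution to PD($\theta$), and since a fixed number $t$ of steps of the chain is a composition of $t$ merge/split operations which are continuous functions of the mass partition and of $(U,U')$ off a null set (block boundaries, $U=U'$), the continuous-mapping theorem yields that $\cY^t$ started from PD($\theta$) has the law of the $n\to\infty$ limit of the $t$-step $\mu_n$-chain, which is PD($\theta$) by stationarity at each $n$. I expect the main obstacle to be precisely this limiting step: one must check that the finite-$n$ dynamics converge, after $t$ steps, to the continuum split--merge dynamics in a strong enough sense to transfer invariance. Concretely this means (i) continuity of the sorting map on $\ell^1$-type sequences, (ii) controlling the exceptional events $U$ on a block boundary, $U=U'$, and ``$i,j$ in the same cycle with $i=j$'', all of which have probability tending to $0$, and (iii) that the exact finite split location differs from its uniform limit by a vanishing amount. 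An alternative that avoids the limit altogether --- essentially Pitman's argument, and the route taken in \cite[Theorem 7.1]{G-U-W} --- is to verify reversibility of the split--merge($\theta$) operator directly against PD($\theta$) using the exchangeable partition probability function of PD($\theta$) (the Ewens formula $\tfrac{\theta^k}{\theta(\theta+1)\cdots(\theta+n-1)}\prod_i(n_i-1)!$); this replaces the analytic limiting argument by a combinatorial identity, and is the version I would ultimately write up in full.
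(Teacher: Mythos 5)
The paper does not prove this lemma: it is stated as a known fact, attributed to Tsilevich (for $\theta=1$), Pitman (general $\theta$), with another proof in Goldschmidt--Ueltschi--Windridge \cite[Theorem 7.1]{G-U-W}. So you are not so much taking a \emph{different} route as supplying a sketch of the very proofs that the paper outsources by citation: your finite-$n$ detailed-balance argument is essentially the discrete backbone of Tsilevich's proof, and your closing alternative (reversibility checked against the EPPF of PD($\theta$)) is essentially Pitman's argument as presented in \cite{G-U-W}.

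On the substance, your identification of the $\cY$-marginal as the split--merge($\theta$) chain is correct, and the finite-$n$ reversibility check is correct: with $\mu_n(\pi)\propto\theta^{c(\pi)}$ and $\pi'=(ij)\pi$ a merge, the only ordered pairs effecting either transition are $(i,j)$ and $(j,i)$, and $\mu_n(\pi)\cdot\tfrac{2}{n(n-1)}=\mu_n(\pi')\cdot\tfrac{2\theta}{n(n-1)}$ since $\theta^{c(\pi)}=\theta\cdot\theta^{c(\pi)-1}$. The part that is genuinely only a sketch is the passage to the continuum. To make the continuous-mapping argument rigorous you would need to fix a topology on mass partitions in which both the sorted rescaled Ewens cycle lengths converge to PD($\theta$) and the one-step split--merge map is a.s.\ continuous at $(\mathrm{PD}(\theta),U,U',W)$; the exceptional events you list (boundary hits, $U=U'$, the $O(1/n)$ discrepancy in the split location, reordering of near-equal small blocks) are the right things to control, and they can be, but the bookkeeping is not free. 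As you note, the EPPF computation sidesteps all of this, which is why it is the cleaner version to write up --- and since the paper already points to \cite{pitman} and \cite[Thm.~7.1]{G-U-W} for exactly that computation, the efficient resolution here is to cite rather than reprove.
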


We will need quantitative results about how the sizes of the
largest unmatched blocks evolve under these dynamics.
We will present a sequence of results,
Lemmas \ref{L3.1} to \ref{L3.3}, 
which culminate in Corollary \ref{C3.4}.
As the proofs of these lemmas are identical or
nearly identical to the corresponding proofs in \cite{schramm} we
omit the details, but give comments
where there are differences in the case $\theta<1$.

Fix $\eps>0$ and introduce the following notation.  
Let $N_\eps(\cY^t)$ and $N_\eps(\cZ^t)$ denote the number of unmatched
blocks of size $\geq\eps$ in $\cY^t$ and $\cZ^t$, respectively,
and let $N^t_\eps=N_\eps(\cY^t)+N_\eps(\cZ^t)$
be  the total  number of unmatched blocks of size $\geq\eps$ after $t$ steps.
Let $\s(\eps,\cY^t)=\sum_{i} \cY^t_i\one_{\{\cY^t_i<\eps\}}$ be the
total length of blocks smaller than $\eps$ in $\cY^t$, and similarly
define $\s(\eps,\cZ^t)$.  Also let
$\ol\eps=\eps+\s(\eps,\cY^0)+\s(\eps,\cZ^0)$.

Before presenting the lemmas about the coupling, we note the following
a-priori estimates:
\begin{proposition}\label{prop:apriori}\hspace{1cm}
\begin{enumerate}[leftmargin=*]
\item If for some $C>0$ and all $\eps\in(0,1)$ we have 
\be\label{eq:sigma-bd}
\EE[\s(\eps,\cY^0)]\leq C\eps \log(\tfrac1\eps)
\ee
then for some $C'>0$ we have 
$\EE[N_\eps(\cY^0)]\leq C'\log^2(\tfrac1\eps)$.
\item If $\cY^0$ has distribution PD($\theta$) with $\theta\in(0,1]$
  then  $\EE[\s(\eps,\cY^0)]\leq \eps$.
\end{enumerate}
\end{proposition}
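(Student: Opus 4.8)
The plan is to prove the two parts separately: part~(2) is a short computation from the stick-breaking description of \textup{PD($\theta$)}, while part~(1) is a soft dyadic bound valid for any random partition of $[0,1)$.

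For part~(2), I would introduce an independent uniform $U\in[0,1)$ and let $L$ be the length of the block of $\cY^0$ containing $U$. Since $\sum_i|\cY^0_i|=1$ almost surely, conditioning on $\cY^0$ gives $\EE[\s(\eps,\cY^0)]=\EE\big[\sum_i|\cY^0_i|\,\one_{\{|\cY^0_i|<\eps\}}\big]=\PP(L<\eps)$, so it remains to identify the law of $L$. From the GEM($\theta$) construction recalled just after Theorem~\ref{thm:main}: conditionally on $B_1$, with probability $B_1$ the point $U$ lands in the first stick (so $L=B_1$) and with probability $1-B_1$ it lands uniformly in what remains, giving $L\eqdist(1-B_1)L'$ with $L'$ an independent copy of $L$. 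A direct substitution shows that the density $\theta(1-x)^{\theta-1}$ solves this distributional fixed-point equation, so $L\sim\mathrm{Beta}(1,\theta)$ and, using $\PP(\mathrm{Beta}(1,\theta)>s)=(1-s)^\theta$, $\PP(L<\eps)=1-(1-\eps)^\theta$. Since $1-\eps\in[0,1]$ and $0<\theta\le1$ force $(1-\eps)^\theta\ge1-\eps$, we conclude $\EE[\s(\eps,\cY^0)]=1-(1-\eps)^\theta\le\eps$. (Equivalently, one may just cite the classical fact that the size-biased pick from \textup{PD($\theta$)} has the $\mathrm{Beta}(1,\theta)$ distribution.)

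For part~(1), I would decompose blocks by a dyadic scale. Put $J:=\lceil\log_2(1/\eps)\rceil$ and let $A_j$, for $0\le j\le J$, be the set of blocks of $\cY^0$ with length in $[\eps2^{j},\eps2^{j+1})$; since no block exceeds length $1$, every block of length $\ge\eps$ lies in exactly one $A_j$, so $N_\eps(\cY^0)=\sum_{j=0}^{J}|A_j|$. Each block of $A_j$ has length $<\eps2^{j+1}$, hence contributes to $\s(\eps2^{j+1},\cY^0)$, while having length $\ge\eps2^{j}$; so pointwise $|A_j|\le\s(\eps2^{j+1},\cY^0)/(\eps2^{j})$. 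Taking expectations and applying the hypothesis with parameter $\eps2^{j+1}$ when $\eps2^{j+1}<1$ (and the trivial bound $\s(\cdot,\cY^0)\le1$ for the at most one value of $j$ with $\eps2^{j+1}\ge1$) yields $\EE|A_j|\le2C\log(1/(\eps2^{j+1}))\le2C\log(1/\eps)$ for the non-boundary $j$, and $\EE|A_j|=O(1)$ otherwise. Summing over the $J+1=O(\log(1/\eps))$ values of $j$ gives $\EE[N_\eps(\cY^0)]\le C'\log^2(1/\eps)$ for all sufficiently small $\eps$, which is the range used later. In the application this is combined with Lemma~\ref{large-lem} to control the number of macroscopic cycles, and with part~(2) to control the number of large blocks of a \textup{PD($\tfrac12$)}-sample.

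The argument is essentially routine; the only point deserving care is the identification of the size-biased block-length $L$ in part~(2), for which I would either quote the classical formula or include the one-line fixed-point check above, together with the bookkeeping in part~(1) ensuring the hypothesis is applied only at scales strictly below~$1$.
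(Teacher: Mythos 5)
Your proof is correct and takes essentially the same approach as the paper. For part~(1) you organize the dyadic estimate by layering block sizes into ranges $[\eps 2^j,\eps 2^{j+1})$ and bounding each layer's count by $\s(\eps 2^{j+1},\cY^0)/(\eps 2^j)$, while the paper instead proves the single pointwise inequality $N_\eps(\cY)\le\sum_{k=0}^{K}2^k\s(2^{-k},\cY)$ and then takes expectations --- two arrangements of the same dyadic computation; for part~(2) the paper simply cites the classical fact that the size-biased pick from $\mathrm{PD}(\theta)$ is $\mathrm{Beta}(1,\theta)$, which you rederive via the stick-breaking fixed point, and your caveat about restricting to small $\eps$ in part~(1) is the right reading of the statement (and is the only regime used later).
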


The proof is sketched (for $\theta=1$) in \cite{schramm}.
For completeness we give details in 
Appendix \ref{app:apriori}.
In the proof of Theorem \ref{thm:main}, 
$\cY^0$ will have the PD($\tfrac12$)-distribution while 
$\cZ^0$ will satisfy a bound of the form \eqref{eq:sigma-bd}.
Thus, in the following sequence of lemmas,  one
should think of $\ol\eps$ as being of the order 
$\leq \eps\log(\tfrac1\eps)$ as $\eps\to0$, and $N^0_\eps$ as of the order
$\leq \log^2(\tfrac1\eps)$.

In the next few results we will be working conditionally on $(\cY^0,\cZ^0)$,
hence $\ol\eps$ and $N_\eps^0$ 
will be treated as constants.
We let $q$ be a random time, independent 
of the chain $((\cY^t,\cZ^t):t\geq0)$, 
 and write $\eta=\max\{\PP(q=t):t\geq0\}$. 

Let $y^t_{(1)}$ and $z^t_{(1)}$ denote the largest
\emph{unmatched} blocks in $\cY^t$ and $\cZ^t$, respectively.  
In the following result, note that $R^t-y^t_{(1)}\vee z^t_{(1)}$ is
small if most of the unmatched length $R^t$ is covered by the largest unmatched 
block in either $\cY^t$ or in $\cZ^t$.
Hence the product $R^t(R^t-y^t_{(1)}\vee z^t_{(1)})$
is small if either $R^t$ is small (which is what we want),
or the unmatched part contains a large block (which can be
handled because such a situation is `unstable').

\begin{lemma}
\label{L3.1}
Conditionally on $\cY^0,\cZ^0$, 
\[
\EE\big[R^q(R^q-y^q_{(1)}\vee z^q_{(1)})\big]\leq
\frac\eta2 N_\eps^0+5\ol\eps\:\EE[q].
\]
\end{lemma}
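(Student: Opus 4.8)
The statement is the $\theta$-version of a lemma of Schramm \cite{schramm}; apart from replacing the deterministic split by a split carried out with probability $\theta=\tfrac12$ (which changes only numerical constants) the argument is the same, so the plan is essentially to transcribe his. Throughout I would work conditionally on $(\cY^0,\cZ^0)$, so that $\ol\eps$ and $N^0_\eps$ are constants, and estimate a non-negative potential attached to the unmatched part of the configuration.

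The engine is a one-step drift estimate. In a single step, governed by $(U,U',W)$, the variable $U$ highlights a block of $\cY$ and a block of $\cZ$ — both lying in the common unmatched interval $[0,R^t)$ or forming a matched pair in $[R^t,1)$; then $U'$ proposes a split of a highlighted block if it falls inside it, and otherwise forces a merge with the block it does hit; a proposed split is executed iff $W\leq\theta$; and when splits are executed simultaneously in $\cY$ and $\cZ$ at a common cut point inside both highlighted blocks, the two equal-length left pieces are declared matched. I would use as potential the number $N^t:=N_\eps(\cY^t)+N_\eps(\cZ^t)$ of unmatched blocks of size $\geq\eps$ (so $N^0=N^0_\eps$), together with $R^t$ itself. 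The favourable event is that $U$ lands in the unmatched interval, in particular in the largest unmatched block of one side, while $U'$ then falls in the (unmatched) highlighted block of the other side: this manufactures a fresh matched pair and decreases $R^t$, and quantifying its probability and typical size yields an expected decrease of order $R^t\big(R^t-y^t_{(1)}\vee z^t_{(1)}\big)$. Against this I would charge the error terms: unmatched blocks of size $<\eps$ carry total length controlled by $\s(\eps,\cdot)$; splitting a large block at a (conditionally uniform) cut point produces new sub-$\eps$ mass only with small probability, while merges only destroy such mass; and the various ``off by one block'' effects are $O(\eps)$ — all absorbed into an $O(\ol\eps)$ error per step, with $\EE[\s(\eps,\cdot)]$ along the chain controlled via Proposition \ref{prop:apriori} and the stationarity of the two marginals (Lemma \ref{PDinvariance}).

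Given a one-step inequality of schematic form $\EE[N^t-N^{t+1}\mid\mathcal F_t]\geq 2R^t\big(R^t-y^t_{(1)}\vee z^t_{(1)}\big)-O(\ol\eps)$, I would then sum it and read it off at the random time $q$. Because $q$ is independent of the chain, $\EE\big[R^q(R^q-y^q_{(1)}\vee z^q_{(1)})\big]=\sum_{t\geq0}\PP(q=t)\,\EE\big[R^t(R^t-y^t_{(1)}\vee z^t_{(1)})\big]$; telescoping $\EE[N^t]$, using $N^0=N^0_\eps$ and $N^t\geq0$, bounding $\PP(q=t)\leq\eta$ on the ``$N^0_\eps$-part'' and letting the cumulative $O(\ol\eps)$-errors contribute $O(\ol\eps\,\EE[q])$, produces the claimed $\tfrac\eta2 N^0_\eps+5\ol\eps\,\EE[q]$, the constants $\tfrac12$ and $5$ being exactly those of Schramm's bookkeeping.

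I expect the main obstacle to be the one-step drift estimate itself: one must account for \emph{both} directions in which the unmatched mass can change — matched pairs being created when both highlighted blocks split at a common point, and matched pairs being destroyed when a highlighted matched pair is merged back into an unmatched block — with the $\cY$- and $\cZ$-dynamics coupled through the shared $U,U',W$, and then show the net error is a genuinely small multiple of $\ol\eps$. The delicate point in that bookkeeping is that splitting can manufacture small (sub-$\eps$) blocks, so one has to argue, using Proposition \ref{prop:apriori} and Lemma \ref{PDinvariance}, that the expected small-block mass stays $O(\ol\eps)$ as the chain runs.
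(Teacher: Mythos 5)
Your plan --- track the potential $N^t_\eps = N_\eps(\cY^t)+N_\eps(\cZ^t)$, telescope a one-step drift, and charge small-block mass as $O(\ol\eps)$ per step --- is what the paper does (it simply cites \cite[Lemma~3.1]{schramm} and notes the one new case). But the mechanism you name for the drift is wrong. A matched pair is created only when a split is executed simultaneously in $\cY$ and $\cZ$, which requires $U'$ to land inside \emph{both} highlighted blocks (i.e.\ $U'<\min(|\cY_i|,|\cZ_j|)$ after they are moved to the front) and $W\leq\theta$ --- not inside the highlighted block of ``the other side'' only, which gives a split on one side and a merge on the other and creates no match. More to the point, on the good event (no sub-$\eps$ blocks created or merged) the simultaneous split leaves $N_\eps$ \emph{unchanged}: the $\geq\eps$ unmatched block that sheds a matched piece $[0,U')$ is replaced by the $\geq\eps$ remainder $[U',\cdot)$, so its contribution to $N_\eps$ persists. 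The event that actually drives $N_\eps$ down by $2$ is $U'$ landing in the unmatched part \emph{outside} both highlighted blocks, which merges two unmatched blocks in $\cY$ and two in $\cZ$; since the highlighted blocks have sizes $\leq y^t_{(1)}$ and $\leq z^t_{(1)}$, this has conditional probability $\geq R^t\bigl(R^t-y^t_{(1)}\vee z^t_{(1)}\bigr)$, and that is the source of the factor $2$ and of the $\tfrac{\eta}{2}$ in the conclusion.

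You also do not engage with the one case that is genuinely new for $\theta<1$, which is precisely the content the paper's proof adds to Schramm's: the step where a merge happens on one side while a split is proposed on the other and then \emph{rejected} because $W>\theta$. Schramm's bookkeeping rests on $N_\eps$ being non-increasing on the good event, so one must check that this new transition is harmless; it is, because there $N^{t+1}_\eps=N^t_\eps-1$, so monotonicity survives. Dismissing the $\theta$-dependence as ``changing only numerical constants'' misses that, for Lemma~\ref{L3.1} specifically, the constants are unchanged from Schramm and the actual work for $\theta<1$ is verifying monotonicity in exactly this rejected-split case.
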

When applying this and the following estimates, the main case 
will be when
$q$ is uniformly distributed on $\{0,1,\dotsc,\ceil{\eps^{-1/2}}-1\}$.
Then $\eta$ is approximately $\eps^{1/2}$ and $\EE[q]$ is of the order 
$\eps^{-1/2}$.  If $\ol\eps$ and $N^0_\eps$ are of
the order indicated above then the right-hand-side is small
(of the order $\leq \eps^{1/2}\log^2(\tfrac1\eps)$).
\begin{proof}
The proof is essentially identical to the proof in 
\cite[Lemma 3.1]{schramm} and uses that on the
event that up to time $q$ no blocks of size $\leq \eps$ are
created or merged,  $N^t_{\eps}$ is non-increasing for $t<q$. 
The only extra case which arises for $\theta<1$
is when, going from $t$ to  $t+1$, 
two blocks are merged in $\cY$ (respectively, $\cZ$)
but a split is proposed for $\cZ$ (respectively, $\cY$) and 
not accepted. In this
case we see that $N^{t+1}_{\eps}=N^t_{\eps}-1$, hence $N^t_{\eps}$ is
still non-increasing.
\end{proof}

Write $y_{(2)}^t$ for the second-largest unmatched block
in $\cY^t$.

\begin{lemma}\label{L3.2}
For $\rho\in(0,1)$ we have (conditionally on $\cY^0,\cZ^0$) 
\[
\PP\big(R^q-(y_{(1)}^q+y_{(2)}^q)>\rho\big) 
\leq 2^6\theta^{-1}\rho^{-4} \big(\tfrac\eta2 N_\eps^0+5\ol\eps\EE[q+1]\big).
\]
\end{lemma}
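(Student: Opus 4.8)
The plan is to deduce this from Lemma~\ref{L3.1} by a Markov-type argument combined with a short ``recovery'' estimate, exactly as in \cite[Lemma~3.2]{schramm}. Write $M^t:=R^t\big(R^t-y^t_{(1)}\vee z^t_{(1)}\big)$ for the quantity controlled by Lemma~\ref{L3.1}, and let $E_q:=\{R^q-(y^q_{(1)}+y^q_{(2)})>\rho\}$ be the event to be bounded. Since the times $q$ and $q+1$ are both independent of the chain, have the same $\eta=\max_t\PP(q=t)$, and satisfy $\EE[q]\le\EE[q+1]$, applying Lemma~\ref{L3.1} to each and adding gives $\EE[M^q]+\EE[M^{q+1}]\le 2\big(\tfrac\eta2 N_\eps^0+5\ol\eps\,\EE[q+1]\big)$. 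The key step is the recovery estimate: there are absolute constants $c,c'>0$ such that for every $t\ge0$, on the event $\{R^t-(y^t_{(1)}+y^t_{(2)})>\rho\}$ one has
\[
\PP\big(M^t\vee M^{t+1}\ge c'\rho^2\,\big|\,(\cY^t,\cZ^t)\big)\;\ge\;c\,\theta\rho^2 .
\]
Granting this, and using that $q$ is independent of the chain, Markov's inequality gives $c c'\theta\rho^4\,\PP(E_q)\le\EE[M^q\vee M^{q+1}]\le\EE[M^q]+\EE[M^{q+1}]\le 2\big(\tfrac\eta2 N_\eps^0+5\ol\eps\,\EE[q+1]\big)$, and choosing $c,c'$ with $2/(cc')\le 2^6$ yields the lemma.

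To prove the recovery estimate, first note that on $\{R^t-(y^t_{(1)}+y^t_{(2)})>\rho\}$ one automatically has $R^t>\rho$ and, since $y^t_{(2)}\ge0$, also $R^t-y^t_{(1)}>\rho$; thus the unmatched part of $\cY$ is already ``spread out''. Hence $M^t>\rho^2$ unless $z^t_{(1)}>y^t_{(1)}$, and even then $M^t\ge\rho^2/2$ unless $z^t_{(1)}>R^t-\rho/2$, i.e.\ unless essentially all of the unmatched mass of $\cZ$ sits in the single block $z^t_{(1)}$, which in that case has size $z^t_{(1)}>R^t-\rho/2>\rho/2$. In every case but this last one the recovery estimate is immediate (with $M^t$ itself, and conditional probability $1\ge c\theta\rho^2$).

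In the remaining case one performs one step of the coupling, aimed at breaking up $z^t_{(1)}$: with probability $z^t_{(1)}>\rho/2$ the variable $U$ highlights $z^t_{(1)}$ in $\cZ$; conditionally, with probability $z^t_{(1)}>\rho/2$ the variable $U'$ lands inside the highlighted copy of $z^t_{(1)}$, proposing a split there; with probability $\theta$ one has $W\le\theta$, so the split is carried out; and with probability bounded below the split point lies in the central third of $z^t_{(1)}$. On this event, whose probability is $\gtrsim\theta\rho^2$, the block $z^t_{(1)}$ is replaced by two pieces of comparable size $\gtrsim\rho$, at least one still unmatched, so afterwards $\cZ$, like $\cY$, has its unmatched mass spread over blocks of size of order $\rho$; consequently $R^{t+1}-y^{t+1}_{(1)}\vee z^{t+1}_{(1)}\gtrsim\rho$ and $R^{t+1}\gtrsim\rho$, giving $M^{t+1}\gtrsim\rho^2$. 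This yields the recovery estimate for suitable $c,c'$.

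I expect the one genuinely delicate point to be the bookkeeping in this last step: if $U'$ also falls inside the highlighted $\cY$-block then that block is split at the same position, creating a matched pair (and, if it was itself matched, unmatching its partner in $\cZ$), so one must check that after these adjustments $\cZ$ still has no single unmatched block carrying almost all of $R^{t+1}$, and that $R^{t+1}$ has not dropped below order $\rho$. This case analysis, and the observation that nothing here is sensitive to $\theta<1$ beyond the explicit factor $\theta=\PP(W\le\theta)$, is carried out exactly as in \cite[Lemma~3.2]{schramm} (compare the comment following the proof of Lemma~\ref{L3.1}), so we omit the details.
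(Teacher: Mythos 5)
Your proposal follows the same strategy as the paper's proof, which itself simply defers to Schramm's argument for \cite[Lemma~3.2]{schramm}: on $\{R^q-z^q_{(1)}\geq\rho/4\}$ apply Lemma~\ref{L3.1} directly, and on $\{R^q-z^q_{(1)}<\rho/4\}$ use one extra step to split $z^q_{(1)}$ and apply Lemma~\ref{L3.1} at time $q+1$, with the factor $\theta^{-1}$ coming from the requirement $W\leq\theta$. Your $M^q\vee M^{q+1}$ formulation is equivalent to the case split and is fine.

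One piece of bookkeeping in your recovery estimate is slightly off, and the "delicate point" you flag at the end is not actually where the subtlety lies. You allow $U'$ to land anywhere in (the central third of) $z^t_{(1)}$ and then worry about the case $U'\in y$ (the $\cY$-block highlighted by $U$), suggesting one should track what happens when a matched pair is created. The clean route --- and the one the paper points to, via "two unmatched blocks of $\cY^q$ merge" --- is simply to \emph{exclude} the event $U'\in y$ from the favourable event. This is always possible with room to spare: on $E_t\cap\{R^t-z^t_{(1)}<\rho/4\}$ one has $z^t_{(1)}>R^t-\rho/4>3\rho/4$ while $y^t_{(1)}\leq R^t-\rho<z^t_{(1)}-3\rho/4$, so $z^t_{(1)}$ exceeds the highlighted $\cY$-block by more than $3\rho/4$, and the region $\{U'\in z^t_{(1)}\}\cap\{U'\notin y\}\cap\{\text{both pieces }\geq\rho/4\}$ has measure at least $\rho/4$. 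On this restricted event a split is proposed only in $\cZ$ (carried out with probability $\theta$), two unmatched $\cY$-blocks merge, $R^{t+1}=R^t$, and one checks directly that $R^{t+1}-y^{t+1}_{(1)}\vee z^{t+1}_{(1)}\geq\rho/4$; no matched pairs are created and the "adjustments" you worry about never occur. Finally, you assert that $c,c'$ can be chosen so that $2/(cc')\leq 2^6$ without checking it; this needs the slightly sharper bookkeeping above (using $\rho/4$ rather than a "central third" heuristic) to land on the advertised constant, but the exponents of $\rho$ and $\theta$ are already correct.
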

The lemma says that the two largest unmatched entries together
dominate the unmatched part (if $\eta$, $q$, $\ol\eps$ and $N^0_\eps$
are of the order indicated above, then the right-hand-side is small as
long as, say, $\rho\geq\eps^{1/10}$).

\begin{proof}
This proof is virtually identical to the proof of 
\cite[Lemma 3.2]{schramm}.  We consider whether the event $\cR=\{R^q-z^q_{(1)}<\rho/4\}$
occurs or not.  In the case when $\cR$ does not occur
we can apply Lemma \ref{L3.1} exactly as in
\cite{schramm}.  In the case when $\cR$ does occur, the key
observation in \cite{schramm} is that 
there is good probability that
$z^q_{(1)}$ splits into two blocks of size $\geq \rho/4$ 
while two unmatched blocks of $\cY^q$ merge, allowing us to
apply Lemma \ref{L3.1} in the next step instead. 
The only extra consideration for $\theta<1$ is that the
split must be accepted, which happens with probability $\theta$,
resulting in the factor $\theta^{-1}$ in the statement of the lemma.
\end{proof}

We next bound the `average' probability of having a large unmatched
block. Its corollary, Corollary \ref{C3.4}, is especially important for us.
\begin{lemma}\label{L3.3}
Let $\eps,\rho\in(0,1)$ and let $t>0$ and $k$ be such
that $t\geq 2^k/\rho$.  Then (conditionally on $\cY^0,\cZ^0$) 
\be
t^{-1}\sum_{s=0}^{t-1} \PP(y_{(1)}^s>\rho)\leq 
C [k^{-1}\rho^{-1}+2^{4k}\rho^{-5}(N_\eps^0/t+\ol\eps t)],
\ee
for some constant $C$.
\end{lemma}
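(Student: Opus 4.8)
The plan is to follow the proof of the corresponding estimate in \cite{schramm} (stated there for $\theta=1$), highlighting the structure and the single point at which the value of $\theta$ intervenes. Throughout one works conditionally on $(\cY^0,\cZ^0)$, so that $\ol\eps$ and $N_\eps^0$ are treated as constants.

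First I would split $\{0,1,\dots,t-1\}$ into consecutive windows and, inside each window, run a dyadic (multi-scale) argument controlling the size of the largest unmatched block $y^s_{(1)}$. The two inputs are Lemma \ref{L3.1}, which says that $R^s(R^s-y^s_{(1)}\vee z^s_{(1)})$ is small on average, and Lemma \ref{L3.2}, which upgrades this to the statement that, off a small-probability event, $R^s$ is essentially carried by the two largest unmatched blocks of $\cY^s$ (and, by symmetry, of $\cZ^s$). The condition $t\ge 2^k/\rho$ guarantees that each window is long enough to accommodate $k$ nested sub-scales of geometrically growing length $2^j/\rho$, $0\le j\le k$, between the smallest relevant time-scale $1/\rho$ and the window length $2^k/\rho$. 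Summing the per-window bounds over the $O(\rho t/2^k)$ windows and dividing by $t$ yields the main term of order $k^{-1}\rho^{-1}$, together with the accumulated error which, after tracking the powers of $\rho$ and $2^k$ produced by the repeated use of Lemmas \ref{L3.1}--\ref{L3.2}, has the claimed form $2^{4k}\rho^{-5}(N_\eps^0/t+\ol\eps t)$.

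The crux is the per-window estimate, and here one exploits the fact that a large unmatched block is \emph{unstable}. On the ``concentrated'' event from Lemma \ref{L3.2}, suppose $y^s_{(1)}>\rho$. Then with probability bounded below by a constant times $\theta$, within the next $O(1/\rho)$ steps the following occurs: $U$ highlights the large unmatched blocks of $\cY$ and $\cZ$, they are moved to the front of $[0,1)$, $U'$ falls in their common prefix, and $W\le\theta$, so that both blocks are split at the same position $U'$; the two prefix pieces then have identical length and are declared matched. This strictly decreases $R$ and, in expectation, reduces the largest unmatched block by a constant factor. (When the unmatched mass of $\cY$ is split between its two largest blocks rather than concentrated in one, one first merges them at an additional constant cost, as in \cite{schramm}.) Iterating this ``descent'' over the $O(\log\tfrac1\rho)$ dyadic size-scales between $\rho$ and $1$, arranged along the $k$ nested time-sub-scales of the window, bounds the density of times $s$ in the window with $y^s_{(1)}>\rho$ by $O(k^{-1}\rho^{-1})$, modulo the error already accounted for.

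The only place the precise value of $\theta\in(0,1]$ enters is the event ``the proposed split is carried out'', which has probability $\theta$; this merely inserts a bounded factor $\theta^{-1}$ (already present in the constant of Lemma \ref{L3.2}) and leaves the combinatorial structure untouched. I expect the main obstacle to be purely one of bookkeeping: one must verify that the errors produced by each invocation of Lemmas \ref{L3.1} and \ref{L3.2} compound across the dyadic scales and windows exactly so as to produce $2^{4k}\rho^{-5}(N_\eps^0/t+\ol\eps t)$, and that the ``merge-then-split'' detour needed in the non-concentrated case does not spoil the telescoping leading to the $k^{-1}$. As each of these steps reproduces a step of \cite{schramm} with at most the factor $\theta^{-1}$ inserted, the details may be safely omitted.
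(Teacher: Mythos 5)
Your proposal reconstructs Schramm's multi-scale argument (dyadic time windows, iterated use of Lemmas \ref{L3.1} and \ref{L3.2}, instability of large unmatched blocks) and correctly identifies that the only modification for $\theta\in(0,1]$ is the factor $\theta^{-1}$ already absorbed into Lemma \ref{L3.2}. This is exactly the route taken in the paper, whose proof is simply a one-line deferral to \cite[Lemma~3.3]{schramm} with the modified constants from Lemmas \ref{L3.1} and \ref{L3.2} substituted in.
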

\begin{proof}
The proof is identical to the proof of \cite[Lemma 3.3]{schramm} where we insert the bounds from Lemmas \ref{L3.1} and \ref{L3.2} when the bounds from  \cite[Lemma 3.1]{schramm} and  \cite[Lemma 3.2]{schramm} are used.
\end{proof}

We now make some additional assumptions on $(\cY^0,\cZ^0)$
and $q$, which allow us to obtain a more explicit bound on
$\PP(y_{(1)}^q\geq \rho)$.  
As usual we work conditionally on $(\cY^0,\cZ^0)$.

\begin{corollary}\label{C3.4}
Assume that $\ol\eps<1$ and that 
\be\label{gamma-eq}
(\ol\eps)^{1-\g}\leq \eta \leq (\ol\eps)^\g/(N_\eps^0\vee 1),
\quad\mbox{ for some } \g\in(0,\tfrac12).
\ee
Then for some $C=C(\g)$ we have that for all
$\rho\in(0,1)$, 
\be\label{c34-eq}
\PP(y_{(1)}^q\geq \rho)\leq \PP(q\geq 1/\eta)+
\frac{C}{\rho\log(1/{\ol\eps})}.
\ee
\end{corollary}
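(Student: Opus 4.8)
The plan is to work conditionally on $(\cY^0,\cZ^0)$ throughout (so that $\ol\eps$, $N_\eps^0$ and $\eta$ become constants while $q$ stays independent of the chain $((\cY^t,\cZ^t):t\ge0)$), to use this independence to reduce to the deterministic-time estimate of Lemma \ref{L3.3}, and then to feed that estimate at a carefully chosen scale. First I would dispose of the degenerate regimes: if $\rho\le C/\log(1/\ol\eps)$ then the right-hand side of \eqref{c34-eq} already exceeds $1$, and if $\ol\eps$ is bounded away from $0$ then $\log(1/\ol\eps)$ is bounded and the bound is again trivial once $C=C(\g)$ is large; so from now on one assumes $\rho>C/\log(1/\ol\eps)$ and $\ol\eps<\ol\eps_0(\g)$. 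Next, using $\PP(q=s)\le\eta$ together with independence, and writing $T:=\ceil{1/\eta}$, one has
\[
\PP(y_{(1)}^q\ge\rho)\le\PP\big(y_{(1)}^q>\tfrac\rho2\big)=\sum_{s\ge0}\PP(q=s)\,\PP\big(y_{(1)}^s>\tfrac\rho2\big)\le\PP(q\ge 1/\eta)+\eta\sum_{s=0}^{T-1}\PP\big(y_{(1)}^s>\tfrac\rho2\big),
\]
the passage to $\rho/2$ serving only to respect the strict inequality in Lemma \ref{L3.3}. Since $\eta\le1$ we get $\eta T\le2$, and the hypotheses \eqref{gamma-eq} give $N_\eps^0/T\le N_\eps^0\eta\le(\ol\eps)^\g$ and $\ol\eps T\le2\ol\eps/\eta\le2(\ol\eps)^\g$, hence $N_\eps^0/T+\ol\eps T\le3(\ol\eps)^\g$.

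Then I would apply Lemma \ref{L3.3} with $t=T$, with $\rho/2$ in place of $\rho$, and with scale $k:=\ceil{\tfrac\g8\log_2(1/\ol\eps)}$. Its hypothesis $T\ge 2^k/(\rho/2)$ has to be checked: here $T\ge1/\eta\ge(\ol\eps)^{-\g}$ by the upper bound in \eqref{gamma-eq}, while $2^k\le2(\ol\eps)^{-\g/8}$, so the requirement reduces to $\rho\ge4(\ol\eps)^{7\g/8}$, which follows from $\rho>C/\log(1/\ol\eps)$ for $\ol\eps<\ol\eps_0(\g)$ since a polylogarithm eventually dominates a polynomial. Lemma \ref{L3.3} then bounds $\eta\sum_{s=0}^{T-1}\PP(y_{(1)}^s>\rho/2)=\eta T\cdot\big(T^{-1}\sum_s\PP(\cdot)\big)$ by an absolute constant times $k^{-1}\rho^{-1}+2^{4k}\rho^{-5}(\ol\eps)^\g$. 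The first term is fine because $k\ge\tfrac\g{8\log2}\log(1/\ol\eps)$, giving $k^{-1}\rho^{-1}\le\tfrac{8\log2}{\g}\cdot\tfrac1{\rho\log(1/\ol\eps)}$. For the second term, $2^{4k}\le16(\ol\eps)^{-\g/2}$ gives $2^{4k}(\ol\eps)^\g\le16(\ol\eps)^{\g/2}$; then, peeling off one factor $\rho^{-1}$ and bounding the remaining $\rho^{-4}$ by $(\log(1/\ol\eps)/C)^4$, one gets $2^{4k}\rho^{-5}(\ol\eps)^\g\le 16C^{-4}(\ol\eps)^{\g/2}(\log(1/\ol\eps))^5\cdot\tfrac1{\rho\log(1/\ol\eps)}\le\tfrac{16}{C^4}\cdot\tfrac1{\rho\log(1/\ol\eps)}$, using $(\ol\eps)^{\g/2}(\log(1/\ol\eps))^5\le1$ for $\ol\eps<\ol\eps_0(\g)$. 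Summing the two contributions yields $\PP(y_{(1)}^q\ge\rho)\le\PP(q\ge1/\eta)+C'(\g)/(\rho\log(1/\ol\eps))$, and choosing $C=C(\g)$ larger than $C'(\g)$ and than all thresholds used above gives \eqref{c34-eq}.

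The step I expect to be the main obstacle — really the only one with content — is the calibration of the scale $k$ in Lemma \ref{L3.3}: it must grow like $\log(1/\ol\eps)$ so that $k^{-1}$ produces the advertised $1/\log(1/\ol\eps)$ gain, yet slowly enough that the $2^{4k}$ factor is still absorbed by the power saving $(\ol\eps)^\g$ supplied by \eqref{gamma-eq} (through $N_\eps^0\eta\le(\ol\eps)^\g$ and $\ol\eps/\eta\le(\ol\eps)^\g$); the leftover $\rho^{-5}$ is harmless only because the claimed inequality is vacuous precisely in the range $\rho\lesssim1/\log(1/\ol\eps)$ that was discarded at the outset. Everything else — the independence splitting, the inequality $\eta T\le2$, and the translation of \eqref{gamma-eq} into $N_\eps^0/T+\ol\eps T\le3(\ol\eps)^\g$ — is routine bookkeeping.
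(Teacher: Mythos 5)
Your proposal is correct and follows the same route that the paper implicitly invokes (the paper simply refers to \cite[Corollary 3.4]{schramm}, with the extra parameter there set to~$1$): condition on $(\cY^0,\cZ^0)$, use the independence of $q$ from the chain together with $\PP(q=s)\le\eta$ to reduce to the Ces\`aro average over the first $T=\ceil{1/\eta}$ deterministic times, feed in Lemma~\ref{L3.3} at a scale $k$ of order $\log(1/\ol\eps)$ so that the $k^{-1}$ term produces the advertised logarithmic gain while $2^{4k}$ is still absorbed by the power saving $(\ol\eps)^\g$ coming from \eqref{gamma-eq}, and dispose of the small-$\rho$ and large-$\ol\eps$ regimes by noting the bound is vacuous there. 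The details you supply (the $\rho\mapsto\rho/2$ adjustment for the strict inequality, the verification of $T\ge 2^k/(\rho/2)$, and the translation of \eqref{gamma-eq} into $N_\eps^0/T+\ol\eps T\le 3(\ol\eps)^\g$) all check out.
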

\begin{proof}
The proof is identical to the proof of \cite[Corollary 3.4]{schramm}
(in \cite{schramm} there is an additional parameter $\lambda$ which we
have set to 1).
\end{proof}

Note that if $q$ is uniformly distributed on
$\{0,1,\dotsc,\ceil{\eps^{-1/2}}-1\}$ and $N^0_\eps$ is of the order
at most $\log^2(\tfrac1\eps)$, as discussed above, then
\eqref{gamma-eq} holds.  Moreover, in this case
$\PP(q\geq 1/\eta)=0$.
If $\ol\eps$ is at most of the order $\eps\log(\tfrac1\eps)$
as discussed above then the right-hand-side of \eqref{c34-eq}
can be made arbitrarily small by choosing $\eps$ small.

\subsection{Proof of Theorem \ref{thm:main}}\label{sec:proofofmainthm}

We now turn to the proof of our main result.  Let $\eps>0$, to be
fixed later.
Recall the set-up of Section \ref{sec:prep}:  $\om$ is sampled from
$\PP_\b$ where $\b>1$, and we consider $\vec\om'_s=\vec\om_s$
for $s=|\om|-q$.  We take $q$
to be uniformly distributed on $\{0,1,\dotsc,\ceil{\eps^{-1/2}}-1\}$.
Let $\cA_0$ be the event that the following conditions hold:
\begin{itemize}[leftmargin=*]
\item for some $c>\tfrac12$ we have $s\geq cn$;
\item the event in \eqref{eq:bal} holds for $\vec\om'_s=\vec\om_s$;
  and 
\item the random graph $G(n,s)$, which has an edge wherever $\vec\om_s$
  has at least one link, has a unique giant connected component $V_G$
  containing between $0.99 zn$ and $1.01zn$ vertices, 
and any other connected
  component has size at most $\log^2n$.
(Here $z$ is the same as in Proposition \ref{fact:loop_closure}.)
\end{itemize}
 In the following discussion we will assume that $\cA_0$ holds, as 
$\PP(\cA_0^c)=o(1)$ as $n\to\oo$.
Note that the cycles $\cycles_0$ refine the components of $G(n,s)$,
hence (on $\cA_0$) a cycle is either contained in $V_G$ or it has 
size $\leq \log^2 n$.

We take $\cY^0$ to have distribution PD($\tfrac12$).
Roughly speaking, $\cZ^0$ will be obtained from the cycles
$\cycles_0$ and we want to 
use the coupling from Section \ref{sec:couplingdescription} to
obtain $(\cY^q,\cZ^q)$.  The main modification of the coupling is that
we 
use the construction in Lemma \ref{lem:split-prob} for splitting in
$\cZ$.  There are also several minor modifications to take into account.  
In what follows we work conditionally on $\vec\om'_0$.

We subdivide the cycles of $\cycles_0$ into segments
$S\in\fS$ as in Section \ref{sec:prep}.  
Write $m=|V_G|$.  We let $\cZ^0$ be 
a representation of the cycles $\cycles_0$ as
intervals as in Section \ref{sec:prep}, 
but now as subintervals of 
$[0,\tfrac nm)$ rather than $[0,1)$.  Thus
each vertex $v$ is represented by a
subinterval $I(v)$ of the form $[\tfrac im,\tfrac{i+1}m)$
where $0\leq i=i(v)\leq n-1$.  Note that $[0,\tfrac nm)$ has
length roughly $\tfrac1z$.   In keeping with the terminology of the
previous subsection, we refer to the intervals which represent the
cycles as \emph{blocks}. 
 The subintervals $I(v)$ representing the vertices 
are labelled using $\up,\dn$, as
 before.  Clearly, a cycle of size $\geq\floor{n^{1/2}}$
in $\cycles_0$ is represented as a block of size 
$\geq \floor{n^{1/2}}/m$ in $\cZ^0$.

We place the blocks representing 
the cycles of $\cycles_0$ which lie in the giant component $V_G$ at the
start of $[0,\tfrac nm)$, i.e.\ in $[0,1)$.  The
blocks representing the  remaining cycles are
placed in $[1,\tfrac nm)$. 
Within $[0,1)$ we will later have 
matched and unmatched blocks, as in
Section \ref{sec:couplingdescription},
and again we place the unmatched blocks first and within the matched
and unmatched parts we order the blocks by decreasing size. 
See Figure \ref{PDhalf-fig}.

\begin{figure}[hbt]
\centering
\includegraphics{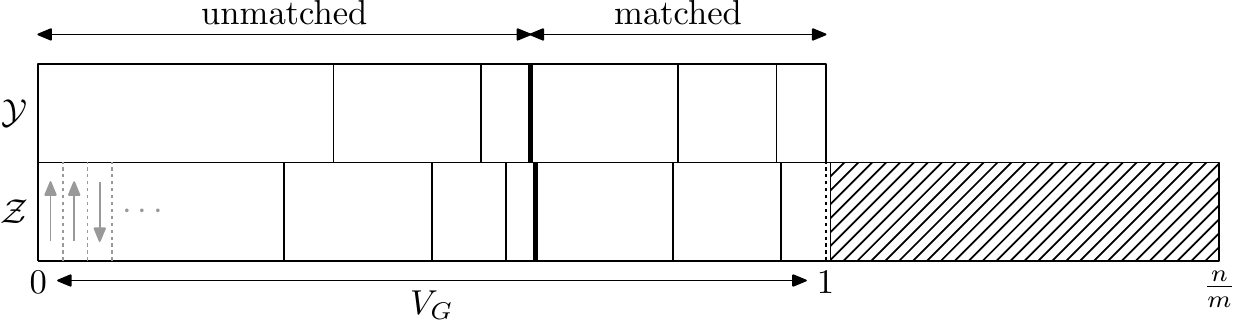}
\caption{
$\cZ^t$ consists of blocks representing the cycles $\cycles_t$,
placed in the interval $[0,\tfrac nm)$.
Cycles belonging to $V_G$ are placed first, roughly in the interval
$[0,1)$, and are sorted into those matched with a block of $\cY^t$ and
those not.  Matched blocks can differ slightly in size.  Also $\cZ^t$
will after a few steps have an `overhang' since the giant component
grows.  The hatched part consists of blocks representing 
cycles with vertices that are not in $V_G$. 
}
\label{PDhalf-fig}
\end{figure}

We will define dynamics for $(\cY^t,\cZ^t)$ such that
the marginal dynamics  for $\cY^t$ are as in Section 
\ref{sec:couplingdescription} with $\theta=\tfrac12$, and 
 the marginal dynamics  for $\cZ^t$ are as in Section
\ref{sec:prep}.  
Thus $\cY^t$ will have distribution PD($\tfrac12$) for all $t$, due to
Lemma \ref{PDinvariance}, and $\cZ^t$ will be a representation of the
cycles $\cycles_t$ as intervals. 

In order to be able to define a \emph{successful coupling} later on, 
we will need the notion of a \emph{forbidden set} 
$F^t\se[0,\tfrac nm)$ (for any given time $t$).
This set will arise due to small errors which accumulate during the process.
Initially, for $t=0$, we set $F^0=\es$. In later steps, we define $F^t$
as consisting of the following parts:
\begin{itemize}[leftmargin=*]
\item First, all segments which have been \emph{touched} up to time $t$ (or rather, the union of the
  $I(v)$ for $v$ belonging to touched segments) in $\cZ$ are forbidden.
\item Second, $F^t$ contains 
an \emph{overhang} 
(defined as $\{s\in(1,\tfrac nm]: s\in I(u) \mbox{ for some }u\in V_G\}$)
which arises because in $\cZ$ cycles outside
  the giant component may merge with cycles inside the giant
  component, meaning that the 
giant component grows with time.
\item Third, it will be necessary to allow matched blocks, defined shortly, to have
  slightly different sizes, rather than the exact same size as in
  Section \ref{sec:couplingdescription}.  When the blocks of
  $\cY^t$ and   $\cZ^t$ are lined up as in Figure \ref{PDhalf-fig}, the subset in
  $[0,\tfrac nm)$ where  part of a matched block does not overlap
  with its partner belongs to the forbidden set $F^t$. 
\item Also, $[0,\tfrac 1m)$ is forbidden.  To understand the meaning
  of this, recall that once
  $U$ has been sampled, the  block it highlighted is moved and rotated
  so that the corresponding $I(u)$ is moved to $[0,\tfrac 1m)$.
  Putting this interval in the forbidden set will simply be a way to
  enforce that all links have two different endpoints.
\end{itemize}
Once we have described precisely the transitions in our process we
will easily be able to bound the size $|F^t|$ by a very small number,
see \eqref{Ft-bd}.

Let us now define a step of the process.  
Steps will again be accomplished using independent uniform random
variables $U$, $U'$, $U''$ and $W$, but now $U$ and $U'$ will be
uniformly distributed in $[0,\tfrac nm)$ while $U''$ and $W$ are still
uniform in $[0,1)$.   For $\cZ$ we will also sample the mark
$m\in\{\cross,\dbar\}$ of the corresponding link in each step.
In what follows we will assume that $U$ and $U'$ never fall in the
forbidden set. For concreteness, if $U$ or $U'$ fell in the forbidden set we would declare the process failed and stop.

Firstly, if $U$ or $U'$ falls outside $[0,1)$ we perform
the corresponding transition in $\cZ$, using the rules from 
Section \ref{sec:prep},
but do nothing to $\cY$. 
Let us now assume that $U$, $U'$ both fall in $[0,1)$.  
When $U$ and $U'$ highlight different blocks these blocks are merged,
as before.  For $\cZ^t$ the labels $\up$, $\dn$ must be handled
appropriately, taking into account also the mark
$m\in\{\cross,\dbar\}$ of the link, as in Section \ref{sec:large}.
In the case when $U'$ falls in a block highlighted by $U$
(in either $\cY$, $\cZ$ or both) it has to be decided if a split
should be carried out.  There are three cases for how this is decided.
First, if a split is proposed in $\cY$ only (Figure
\ref{v-mix-fig}) then it is carried out if $W\leq \tfrac12$,
so this case is the same as in Section
\ref{sec:couplingdescription}.  Second, if a split is proposed only in
$\cZ$  then we decide whether to carry it out by looking at
the labels $\up$, $\dn$ in the intervals $I(u)$ and $I(v)$ selected
by $U$ and $U'$, 
as well as the mark $m\in\{\cross,\dbar\}$ of the link, 
and applying the rules of Section
\ref{sec:large}.  In this case we do not need to use $U''$.
However, the third and most important case is when a split is proposed
in both $\cY$ and $\cZ$.  In this case we do the following:
\begin{enumerate}[leftmargin=*]
\item In $\cY$ we record the exact location of $U'$.  If we decide to
  carry out the split in $\cY$, then it will be done at the location
  of $U'$.
\item  In   $\cZ$ we only record the segment $S$ in which $U'$ falls;
\item Then we use $U''$ to independently sample a uniform point
  within $S$ and make the splitting decision for $\cZ$ as in Section
  \ref{sec:prep}.    
\end{enumerate}
It only remains to specify how we decide whether or not to split in
$\cY$.  Let $v$ be such that   $U''\in I(v)$.   
Assume that the block of $\cZ$ highlighted by $U$ has 
size $\geq \floor{n^{1/2}}/m$, and that $U$, $U'$ did not fall in the
forbidden set $F^t$.  We are then able to apply Lemma
\ref{lem:split-prob}.
Thus the conditional probability $p_t$ that $I(u)$ and $I(v)$ have the
same label $\up$ is within $n^{-1/12}\log^3 n$ of
$\tfrac12$.  Depending on whether $p_t$ is bigger or smaller than
$\tfrac12$, and also on the mark $m\in\{\cross,\dbar\}$ of the link,
the conditional probability of splitting in $\cZ^t$ is thus either
slightly above or slightly below $\tfrac12$.  We wish to `maximally
couple' the decision whether or not to split in $\cZ$ with the
decision in $\cY$, but keeping the splitting probability for $\cY$ at
exactly $\tfrac12$.  Let us describe this assuming $p_t\leq \tfrac12$,
the other case is similar.
\begin{itemize}[leftmargin=*]
\item If the mark $m=\cross$, recall that this means that
  we split in $\cZ$ if $I(v)$ has label $\up$, i.e.\ with probability
  $p_t\leq\tfrac12$.  Our rule for 
  $\cY$ is then:  split in $\cY$ if $I(v)$ has label $\up$, but if
  $I(v)$ has label $\dn$ split in $\cY$ anyway
with probability $(\tfrac12-p_t)/(1-p_t)$
  (independently of all other choices).
\item If the mark $m=\dbar$, this means that
  we split in $\cZ$ if $I(v)$ has label $\dn$, i.e.\ with probability
  $1-p_t\geq\tfrac12$.  Our rule for 
  $\cY$ is then: do nothing (no split) in $\cY$ if $I(v)$ has label $\up$, but if
  $I(v)$ has label $\dn$ also do nothing in $\cY$ with probability
  $(\tfrac12-p_t)/(1-p_t)$   (independently of all other choices).
\end{itemize}
It is not hard to check that these rules ensure that the probability
of splitting in $\cY$ is exactly $\tfrac12$, independently of the
location $U'$ of the proposed split. We can also see from this that the probability of $\cY$ and $\cZ$
making different choices (i.e. one splits and the other one twists) 
is $|p_t-\tfrac12|\leq n^{-1/12}\log^3n$.

If the decision is to split in \emph{both} $\cY$ and $\cZ$, the blocks
created are declared \emph{matched} as in Section
\ref{sec:couplingdescription} (if the blocks which split were already
matched we get two pairs of matched blocks, otherwise one pair).
Note that $U'$ and $U''$  differ by at most $2 \floor{n^{1/2}}/m$ due
to the upper bound on the size of segments $S$;  
this will give us a bound on how much matched blocks can differ in size.

There is one final case in which we need to specify the rules for
deciding to split, which is when a split is proposed in both $\cY$ and
$\cZ$ but the block of $\cZ$ has size $<\floor{n^{1/2}}/m$.
This is unlikely and we will see that we can assume that this does not
occur, but to be definite let us say that in this case we split in
$\cY$ if $W\leq\tfrac12$.

Now we turn to bounding the size of the forbidden set $F^t$.
We claim that, for any $t\in\{0,1,\dotsc,q\}$ we have 
\be\label{Ft-bd}
|F^t|\leq \frac{7 \eps^{-1/2} n^{1/2}}{m}.
\ee
Indeed, after $t$ steps we have added at most $4t\floor{n^{1/2}}/m$
due to touched segments, at most $t \log^2n/m$ due to overhang, and at
most $2t \floor{n^{1/2}}/m$ due to the size-difference of matched
blocks.  Adding to this $1/m$ for $[0,\tfrac1m)$ and recalling that
$t\leq q\leq \eps^{-1/2}$ we arrive at \eqref{Ft-bd}.

Let us say that the coupling $(\cY^q,\cZ^q)$ was \emph{successful},
denoting this event by $\cG$, if the following occur for all
$t\in\{1,\dotsc,q\}$:
\begin{itemize}[leftmargin=*]
\item $U_t$, $U'_t$ do not fall in the forbidden set $F^t$; 
\item in step $t$ we do not propose to split a block of $\cZ$ which
  has size   $<\floor{n^{1/2}}/m$; and
\item if at step $t$ it is proposed to split a block in both $\cY$ and
  $\cZ$ then we either split in both or in neither.
\end{itemize}
Using \eqref{Ft-bd}  and Lemma \ref{lem:split-prob} and recalling that
$q\leq \eps^{-1/2}$ we get:
\be\begin{split}\label{fail-prob}
\PP(\cG^c)&\leq 2\eps^{-1/2}\frac{7\eps^{-1/2}n^{1/2}/m}{n/m}
+\eps^{-1/2} \frac{n^{1/2}/m}{n/m}
+ \eps^{-1/2}n^{-1/13}\\
&\leq 16\eps^{-1} n^{-1/13}.
\end{split}\ee
Here we bounded the probability that we make different decisions for
splitting a large block in $\cY$ and in $\cZ$ by $n^{-1/13}$, which is
valid for large enough $n$.  

We can now put the different pieces together and wrap up the
proof of Theorem \ref{thm:main}.  
Having defined $(\cY^q,\cZ^q)$, let us now order the blocks in both of
them by decreasing size, and let us think of them as two infinite sequences by
appending infinitely many 0's at the end.  Let $\d>0$ be arbitrary
and write 
\[
\cD=\big\{\|\cY^q-\cZ^q\|_\oo>\d\big\}.
\]
It suffices to show that $\PP(\cD)$ can be made arbitrarily small by
choosing $\eps>0$ small and $n$ large.

Recall that we have been working on
the event $\cA_0$ defined in the beginning in this proof.
Also recall the quantities $\s(\eps,\cY)$ and
$N_\eps(\cY)$ defined in Section \ref{sec:couplingdescription}.   
We now define $\s(\eps,\cZ)$ and $N_\eps(\cZ)$ similarly but counting
only those blocks which intersect $[0,1)$.  Given this,
 $\ol\eps$ and $N^0_\eps$ are given as in Section \ref{sec:couplingdescription}.   
By \eqref{large1} in Lemma \ref{large-lem} we have for some $C>0$ and
any $\eps>0$ that $\EE[\s(\eps,\cZ^0)]\leq C\eps\log(\tfrac1\eps)$.
Hence by Proposition \ref{prop:apriori} we also have 
$\EE[N_\eps(\cZ^0)]\leq C'\log^2(\tfrac1\eps)$.
By the same Proposition, the same bounds also apply to $\cY^0$.  
Hence $\EE[\ol\eps]\leq 3C\eps\log(\tfrac1\eps)$
and $\EE[N^0_\eps]\leq 2 C'\log^2(\tfrac1\eps)$.
Defining the events $\cA_1=\{\ol\eps\leq \eps^{3/4}\}$
and $\cA_2=\{N^0_\eps\leq \eps^{-1/4}\}$ and using Markov's
inequality, we get
\[
\PP(\cA_0\cap\cA^c_1)\leq 3C\eps^{1/4}\log(\tfrac1\eps),\qquad
\PP(\cA_0\cap\cA^c_2)\leq 2C'\eps^{1/4}\log^2(\tfrac1\eps).
\]
On $\cG\cap\cA_0\cap\cA_1\cap\cA_2$, we can apply Corollary
\ref{C3.4}, with $\g=\tfrac15$ say, and $\rho=\d/2$. 
(We use $\rho=\d/2$ rather than $\d$ to account for such things as the
size-difference of matched blocks;  thus $n$ should be taken
sufficiently large.)
This gives, for some $C''>0$ and $n$
large
\[\begin{split}
\PP(\cD)&\leq \PP(\cA_0^c)+
\PP(\cA_0\cap (\cA^c_1\cup\cA^c_2\cup\cG^c))+
\PP(\{y^q_{(1)}\geq \d/2\}\cap \cA_0\cap\cA_1\cap\cA_2\cap\cG)\\
&\leq o(1)+ C''\eps^{1/4}\log^2(\tfrac1\eps)+
\frac{2C}{\d\log(\eps^{-3/4})}.
\end{split}\]
The right-hand-side can be made arbitrarily small by picking $\eps>0$
small and $n$ large. 
(Since $\cG$ involves the entire process, to be completely rigorous
Lemmas \ref{L3.1}--\ref{L3.3} and Corollary \ref{C3.4} 
should be proved on the event $\cG$.  This can be done by working with
the time 
$\min\{q,\tau\}$ where $\tau$ is the first
time at which $\cG$ fails.
From \eqref{fail-prob} we see that, with high
probability, $\tau>q$ and so 
the only change is the addition of an $o(1)$ term.)
\qed

\appendix

\section{Proof of Proposition \ref{fact:loop_closure}}
\label{sep-app}

We use the notation from Proposition \ref{fact:loop_closure},
noting that
$\cS=\{Z_t>-1\mbox{ for all }t\geq0\}$.
Also note that $z=\PP(\cS)>0$ when $\b>1$, since if $z=0$ then  
$\PP(\exists t: Z_t=-1)=1$ which by the Markov property would
imply $\PP(\liminf_{t\to\oo} Z_t=-\oo)=1$, contradicting the fact that 
$Z_t\to+\oo$ almost surely.

It will be useful to
consider the \emph{times of record minima} $m_k$, which are defined as
follows.  Let $\tau_1,\tau_2,\dotsc$ 
denote the jump times of $Z$ (equivalently, of $N'$).  First we define
$m_1:=\tau_1$, and then inductively 
\[
m_{k+1}:=\min\{\tau_j>m_k: Z_{\tau_j-}<Z_{m_k-} \},
\qquad\mbox{where } \min\varnothing=\oo.
\]
Importantly, the $m_k$ are stopping times and they 
characterize the frontier time $\ell_1$ by
$\ell_1=\max\{m_k:m_k<\oo\}$, i.e.\ $\ell_1$ equals the last record
time.  The later frontier times $\ell_k$ may be expressed similarly using
the record minima of $Z^{(k)}$.
Using that the $m_k$ are stopping times we have 
for all $k\geq1$ that
\be\label{m-z-eq}
\PP(m_k<\oo)=(1-z)^{k-1}.
\ee

\begin{proof}[Proof that $z=1-e^{-\b z}$:]
Write $\d=1-z=\PP(\exists t: Z_t=-1)$.
By conditioning on the first time when $Z_t$ hits $-1$ we see that
$\PP(\exists t:Z_t=-j)=\d^j$ for all $j\geq1$.
Since $N'_t$ only takes integer
values it follows $\d^j=\PP(\exists k\geq0:N'_{k+j}=k)$.  
From this and 
the Markov property at time 1 we find that
\[
\d=\sum_{j\geq0} e^{-\b}\tfrac{\b^j}{j!}\d^j=e^{-\b(1-\d)},
\]
as claimed.
\end{proof}

\begin{proof}[Proof that, given $\cS$,
the sequence 
$\big\{\big(\D_{k},(Z^{(k)}_t)_{0\leq t<\D_k}\big)\big\}_{k=0}^{+\infty}$
is i.i.d.:]
We start by establishing that
\be\label{Zk-eq}
Z^{(k)} \eqdist \big(Z\mid Z\in\cS\big),\qquad
\mbox{for all }k\geq1.
\ee
This is reasonable since, for example,
after $\ell_1$ we know that $Z$ does not set a new record minimum,
which is the same as saying that $Z^{(1)}$ does not hit $-1$.
Using induction, \eqref{Zk-eq} follows from these two
equalities in law:
\be\begin{split}\label{Z1-Z0-eq}
&Z^{(1)}\eqdist \big(Z\mid Z\in\cS\big),
\quad\mbox{ and}\\
&\big(Z^{(1)}\mid Z\in\cS\big)\eqdist 
\big(Z\mid Z\in\cS\big).
\end{split}\ee

To prove \eqref{Z1-Z0-eq},
let $\cB$ be some event.  Using the description of $\ell_1$ in
terms of the stopping times $m_k$ we see that
\[\begin{split}
\PP(Z^{(1)}\in\cB)&=\sum_{k\geq1} 
\PP\big(m_k<\oo,m_{k+1}=\oo,(Z_{m_k+t}-Z_{m_k})_{t\geq0}\in\cB\big)\\
&=\sum_{k\geq1} 
\PP\big(m_k<\oo,(Z_{m_k+t}-Z_{m_k})_{t\geq0}\in\cB\cap\cS\big)\\
&=\sum_{k\geq1}  \EE\big[\one_{\{m_k<\oo\}}
\PP\big((Z_{m_k+t}-Z_{m_k})_{t\geq0}\in\cB\cap\cS\mid m_k\big)
\big]\\
&=\PP(Z\in\cB\cap\cS)\sum_{k\geq1} \PP(m_k<\oo)
=\PP(Z\in\cB\cap\cS)\sum_{k\geq1} (1-z)^{k-1}\\
&=\PP(Z\in\cB\cap\cS)/\PP(Z\in\cS),
\end{split}\]
thus $Z^{(1)}\eqdist (Z^{(0)}\mid Z^{(0)}\in\cS)$.

It will be useful to note the following:
\[\begin{split}
\PP(\ell_1\leq a\mid Z\in\cS)&=
\sum_{k\geq1} 
\PP\big(m_k\leq a,m_{k+1}=\oo,Z\in\cS\big)/\PP(\cS)\\
&=\sum_{k\geq1} 
\PP\big(m_k\leq
a,Z_{m_{k}-}>-1,(Z_{m_k+t}-Z_{m_k})\in\cS\big)/\PP(\cS)\\
&=\sum_{k\geq1}  \PP\big(m_k\leq a, Z_{m_k}>0).
\end{split}\]
In particular, letting $a\to\oo$,
\[
1=\sum_{k\geq1}  \PP\big(m_k<\oo,Z_{m_k}>0).
\]
Using this we find that 
\[\begin{split}
\PP(Z^{(1)}\in\cB\mid Z\in\cS)
&=\PP(\cS)^{-1}\sum_{k\geq1}
\PP(m_k<\oo,Z_{m_k}>0,(Z_{m_k+t}-Z_{m_k})_{t\geq0} \in\cB\cap\cS)\\
&=\frac{\PP(Z\in\cB\cap\cS)}{\PP(\cS)}
\sum_{k\geq1}  \PP\big(m_k<\oo,Z_{m_k}>0)\\
&=\PP(Z\in\cB\mid Z\in\cS).
\end{split}\]
Thus 
$\big(Z^{(1)}\mid Z\in\cS\big)\eqdist  \big(Z\mid Z\in\cS\big).$

It remains to prove independence.  For this
it suffices to show that, for any $r\geq2$
and events $\cB_1,\dotsc,\cB_r$ we have
\[
\PP((Z^{(i)}_t)_{0\leq t<\D_i}\in\cB_i\;\forall i=1,\dotsc,r)=
\prod_{i=1}^r \PP((Z^{(i)}_t)_{0\leq t<\D_i}\in\cB_i).
\]
We give details for the case $r=2$, the other cases are  similar. 
Using \eqref{Zk-eq} we have 
\begin{multline}\label{X1-eq}
\PP((Z^{(1)}_t)_{0\leq t<\D_1}\in\cB_1,
(Z^{(2)}_t)_{0\leq t<\D_2}\in\cB_2)\\=
\PP((Z^{(0)}_t)_{0\leq t<\D_0}\in\cB_1,
(Z^{(1)}_t)_{0\leq t<\D_1}\in\cB_2\mid Z^{(0)}\in\cS).
\end{multline}
Note that
\be\begin{split}\label{X0-eq}
&\PP((Z^{(0)}_t)_{0\leq t<\D_0}\in\cB\mid Z^{(0)}\in\cS)\\
&=\PP(\cS)^{-1}
\sum_{k\geq1} \PP\big(m_k<\oo,Z_{m_k}>0, (Z_t)_{0\leq t<m_k}\in\cB,
(Z_{m_k+t}-Z_{m_k})_{t\geq0}\in\cS\big)\\
&=\sum_{k\geq1} \PP\big(m_k<\oo,Z_{m_k}>0, (Z_t)_{0\leq t<m_k}\in\cB\big)
\end{split}\ee
The right-hand-side of \eqref{X1-eq} equals
\[\begin{split}
\PP(\cS)^{-1}\sum_{k\geq1}
\PP\Big(m_k<\oo,Z_{m_k}>0,& (Z_t)_{0\leq t<m_k}\in\cB_1,
(Z_{m_k+t}-Z_{m_k})_{t\geq0}\in\cS;\\
&(Z_{m_k+t}-Z_{m_k})_{0\leq t<\ell_2-\ell_1}\in\cB_2\Big).
\end{split}\]
By conditioning on $m_k$ and $(Z_t)_{0\leq t<m_k}$ we find that this
equals 
\[
\PP((Z^{(0)}_t)_{0\leq t<\D_0}\in\cB_2\mid Z\in\cS)
\sum_{k\geq1} \PP(m_k<\oo,Z_{m_k}>0,(Z_t)_{0\leq t<m_k}\in\cB_1),
\]
which by \eqref{X0-eq} and \eqref{Zk-eq} 
equals
$\PP((Z^{(1)}_t)_{0\leq t<\D_1}\in\cB_1) 
\PP((Z^{(2)}_t)_{0\leq t<\D_2}\in\cB_2)$.
\end{proof}

\begin{proof}[Proof of \eqref{eq:tauYtails} and \eqref{eq:Dtails}]
Both will follow from $\PP(\ell_1\geq t)\leq Ce^{-ct}$.
Indeed, for \eqref{eq:tauYtails} we have that
\[
\PP(\tau^Y<\oo,\tau^Y\geq t)\leq 
\PP(Z_{\ell_1-}\leq -1, \ell_1\geq t)\leq 
\PP(\ell_1\geq t),
\]
and for \eqref{eq:Dtails} we have by what was shown above
\[
\PP(\D_k\geq t\mid \cS)= \PP(\ell_1\geq t\mid \cS)
\leq \frac1z \PP(\ell_1\geq t).
\]
We have
\[\begin{split}
\PP(\ell_1\geq t)\leq \sum_{k\geq \floor{t}} \PP(\ell_1\in[k,k+1))
\leq \sum_{k\geq \floor{t}} \PP(Z_k<1)
=\sum_{k\geq \floor{t}} \PP(N'_k<k+1),
\end{split}\]
since if $\ell_1\in[k,k+1)$ then in particular $Z_t$ must be $<0$ for
some $t\in[k,k+1)$ which requires that $Z_k<1$.
Now $N'_k$ is Po($\b k$)-distributed, so a 
simple computation with Laplace-transforms gives
\[
\PP(N'_k<k+1)
\leq e\b \exp[-k(\b-1-\log \b)].
\]
Since $\b>1$ we have that $\b-1-\log \b>0$, and the bound follows. 
\end{proof}

\section{Proof of Proposition \ref{prop:apriori}}
\label{app:apriori}

Let us write simply $\cY$ for $\cY^0$.
For the first part,
let $K=\lceil \log_2(1/\eps)\rceil$.  We have
\[\begin{split}
\sum_{k=0}^K 2^k\s(2^{-k},\cY)&\geq 
\sum_{i} \cY_i \sum_{k=0}^K 2^k \one_{\{2^{-K}\leq \cY_i<2^{-k}\}}\\
&= \sum_{i} \cY_i \one_{\{\cY_i\geq \eps\}} 
\sum_{k=0}^{\lfloor\log_2(1/\cY_i)\rfloor} 2^k \\
&\geq \sum_{i\geq1} \cY_i \one_{\{\cY_i\geq \eps\}} \frac1{\cY_i}
= N_\eps(\cY).
\end{split}\]
Hence using \eqref{eq:sigma-bd},
\[
\EE[N_\eps(\cY)]\leq \sum_{k=0}^K 2^k\EE[\s(2^{-k},\cY)]
\leq C\sum_{k=0}^K k\leq C' K^{2},
\]
which gives the claim.

Next, if $\cY$ is PD($\theta$)-distributed  then
a size-biased sample from $\cY$ is
Beta(1,$\theta$)-distributed.  This means that if we select a random
index $I$ in such a way that $\PP(I=i\mid \cY)=|\cY_i|$, then 
\[
\EE[\s(\eps,\cY)]=
\EE\big[\sum_{i} \one_{\{|\cY_i|<\eps\}}|\cY_i|\big]
=\PP(\cY_I<\eps) = 1-(1-\eps)^\theta\leq \eps,
\]
as claimed. \qed

\end{document}